\newcommand{\rrvert}{\vert}
\newcommand{\llvert}{\vert}
\newcommand{\eqref}[1]{(\ref{#1})}
\newtheorem{theorem}{Theorem}[section]
\newtheorem{corollary}[theorem]{Corollary}
\newtheorem{lemma}[theorem]{Lemma}
\newtheorem{proposition}[theorem]{Proposition}
\newtheorem{pb}{Problem}
\newcommand{\Z}{\mathbb{Z}}
\newcommand{\E}{\mathbb{E}}
\newcommand{\R}{\mathbb{R}}
\newcommand{\bbP}{\mathbb{P}}
\newcommand{\mN}{\mathcal{N}}
\newcommand{\om}{\omega}
\newcommand{\vk}{\varkappa}
\newcommand{\Var}{\mathop{\mathbb{V}\mathrm{ar}}}
\newcommand{\conn}{K}
\newcommand{\compact}{\mathcal{K}}
\newcommand{\mF}{\mathcal{F}}
\newcommand{\fp}{\Phi}
\newcommand{\1}{\mathbh{1}}
\begin{document}
\begin{frontmatter}

\title{A counterexample to the Cantelli conjecture through the
Skorokhod embedding problem}
\runtitle{A counterexample to the Cantelli conjecture}

\begin{aug}
\author[A]{\fnms{Victor}~\snm{Kleptsyn}\thanksref{T1}\ead[label=e1]{victor.kleptsyn@univ-rennes1.fr}}
\and
\author[B]{\fnms{Aline} \snm{Kurtzmann}\corref{}\ead[label=e3]{aline.kurtzmann@univ-lorraine.fr}}

\thankstext{T1}{Supported in part of RFBR Grant 10-01-00739-a and
joint RFBR/CNRS Grant 10-01-93115-CNRS-a.}
\affiliation{Universit\'e Rennes 1 and Universit\'e de Lorraine}

\address[A]{IRMAR (UMR 6625 CNRS)\\
Universit\'e Rennes 1\\
Campus de Beaulieu\\
F-35042 Rennes Cedex\\
France\\
\printead{e1}}

\address[B]{Institut Elie Cartan de Lorraine\\
UMR 7502\\
Universit\'e de Lorraine\\
Vandoeuvre-l\`es-Nancy, F-54506\\
France\\
and\\
CNRS\\
Institut Elie Cartan de Lorraine\\
UMR 7502\\
Vandoeuvre-l\`es-Nancy, F-54506\\
France\\
\printead{e3}}
\runauthor{V. Kleptsyn and A. Kurtzmann}
\end{aug}

\received{\smonth{6} \syear{2013}}
\revised{\smonth{3} \syear{2014}}

%
\begin{abstract}
In this paper, we construct a counterexample to a question by Cantelli,
asking whether there exists a nonconstant positive measurable function
$\varphi$ such that for i.i.d. r.v. $X,Y$ of law $\mathcal{N}(0,1)$, the r.v.
$X+\varphi(X)\cdot Y$ is also Gaussian.

This construction is made by finding an unusual solution to the
Skorokhod embedding problem (showing that the corresponding Brownian
transport, contrary to the Root barrier, is not unique). To find it, we
establish some sufficient conditions for the continuity of the Root
barrier function.
\end{abstract}

%
\begin{keyword}[class=AMS]
\kwd[Primary ]{60G40}
\kwd[; secondary ]{60J65}
\end{keyword}

\begin{keyword}
\kwd{Cantelli conjecture}
\kwd{Skorokhod embedding}
\kwd{Root barrier}
\kwd{Gaussian variable}
\kwd{Stefan problem}
\kwd{mass transport}
\kwd{Brownian motion}
\end{keyword}
%

\end{frontmatter}

\section{Introduction}\label{s:intro}
\subsection{History of the Cantelli conjecture}
The general thema of this paper is the following.

\begin{conjecture*}
Let $X,Y$ be two real random variables, of standard Gaussian
distribution law. Suppose that $X$ and $Y$ are independent. Let
$\varphi
$ be a measurable \emph{nonnegative} function. Then the random
variable $X+\varphi(X)\cdot Y$ has a Gaussian distribution law if and
only if $\varphi$ is constant.
\end{conjecture*}

Actually, Cantelli has originally mentioned this as a question in his
paper \cite{C}, page 407, asking whether it is possible to have a
nonconstant function $\varphi$, but later it became known as Cantelli
conjecture.
This conjecture has been previously studied by different authors.
First, Tortorici \cite{To} has given some restrictions on the function
$\varphi$ to satisfy the conjecture. To do that, he has developed
$\varphi$ in a Hermite series and has approached the solution (via a
truncation of the series). Then Tricomi \cite{T} has used analytical
tools in order to describe some properties satisfied by the function
$\varphi$ (through the characteristic function). In the same paper, he
has also given a survey on this subject.
Later, Dudley \cite{D} has exposed two unsolved problems about
finite-dimensional Gaussian measures. One of them was Cantelli
conjecture. Dudley said about it ``The problem seems to be a mere
curiosity, but that will perhaps be unclear until it is solved.'' Letac
has also worked on this problem and has emphasized this question in his
exercise book with Malliavin \cite{LM}. Indeed, they have suggested an
exercise, showing that the decomposition of $\varphi$ with respect to
the Hermite polynomials,\vspace*{-2pt} that is, $\varphi(x) = \sum_{n\ge0} \varphi_n
\frac{H_n(x)}{n!}$ [in the $L^2(e^{-x^2/2} \frac{\mathrm{d}x}{\sqrt
{2\pi
}})$ sense] is such that $\varphi_1=0$, $-2\varphi_2 = \sum_{n\ge2}
\frac{\varphi_n^2}{n!}$ and $\varphi(x) \le\varphi_0+1$ almost everywhere.

Finally, this striking question has been mentioned by de Meyer,
Roynette, Vallois and Yor \cite{DRVY}, Section 6. Actually, they
answered a related question, asked by Tortrat. Consider a standard
$(\mF
_t, t\ge0)$-Brownian motion, denoted by $(B_t, t\ge0)$. Can one
find an a.s. bounded random variable $Z$, nonconstant and $\mF
_1$-measurable, such that $B_1+Z(B_2-B_1)$ has a Gaussian distribution law?
de Meyer et al. have proved the existence of a linear standard $(\mF_t,
t\ge0)$-Brownian motion $(B_t, t\ge0)$, and a stopping time
$T$ [w.r.t. $(\mF_t, t\ge0)$] which is bounded by 1, nonconstant
and such that $B_T$ has a Gaussian distribution law. Thanks to this
result, they have shown that the random variable $B_1 + \sqrt
{T}(B_2-B_1)$ has a Gaussian distribution law. In their example, $\sqrt
{T}$ is $\mF_1$-measurable, bounded and nonconstant. However, $\sqrt
{T}$ is not a function of $B_1$. So this construction does not
contradict the Cantelli conjecture.

In the present paper, we construct a counterexample to the Cantelli
conjecture. It seems interesting to us (being, perhaps, a reply to a
phrase of Dudley cited above), that its construction uses the link of
the question to the Skorokhod embedding problem, as well as to the
Stefan-type problem in partial differential equations.

Let us indicate how the rest of this paper is organized. The first step
in the construction of a counterexample to the Cantelli conjecture,
stated in Section \ref{ss:main}, is based upon its link with the other
famous problem, the Skorokhod embedding problem for which we remind the
preceding works in Section \ref{ss:history}. We will explain the link
below in Section \ref{ss:BT}. Also, we will introduce there a notion
closely related to Skorokhod embedding-type problems (in particular to
Root barrier): the Brownian transport. For our construction to work, we
need some existence statements about this transport: Theorems \ref
{thm:BT-line} and \ref{thm:BT-interval}. These theorems are stated in
Section~\ref{s:skoro}.

The main tool in the proof of Theorems \ref{thm:BT-line} and \ref
{thm:BT-interval} is the potential function $\fp_t$ (going back to
Chacon \cite{cha} and obeying a PDE of the type of Stefan problem),
that we introduce in Section \ref{s:ideas}. Using this function, we
obtain some {a priori} estimates. Roughly speaking, ``how the
solution should look like assuming that it is nice.'' We also deduce
from these estimates Theorem \ref{t:main} (that will thus be
established once these estimates are formally proven).

Finally, in Section \ref{s:BT-exists}, by means of the discretization,
we prove the {a priori} estimates, thus completing the proofs of
our result.

\subsection{Result for the Cantelli conjecture}\label{ss:main}

Our main result here will be the following.
%

\begin{theorem}\label{t:main}
There exists a measurable nonconstant function $\varphi\dvtx\R\to\R_+$
such that for two independent standard Gaussian variables $X,Y\sim\mN
(0,1)$, the random variable
$X+\varphi(X)\cdot Y$ is also Gaussian.
\end{theorem}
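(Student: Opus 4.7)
The plan is to interpret the statement probabilistically and then reduce it to a Stefan-type free-boundary PDE. Setting $T(x):=\varphi(x)^2$, the random variable $X+\varphi(X)Y$ has the same law as the position, at time $T(X)$, of a Brownian particle started at $X$; equivalently, take a cloud of independent Brownian particles with initial law $\rho_0=\mN(0,1)$ and freeze each particle starting at $x_0$ at the deterministic time $T(x_0)$. The law of the resulting frozen positions is exactly the law of $X+\varphi(X)Y$. Hence the task reduces to producing a non-constant measurable $T:\R\to[0,\infty)$ whose frozen distribution is $\mN(0,\sigma^2)$ for some $\sigma^2>1$.

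My ansatz is to look for an even $T$, strictly decreasing on $[0,a_0]$ and vanishing on $[a_0,\infty)$, with $a_0>0$ chosen as the positive crossing point of the densities of $\mN(0,1)$ and $\mN(0,\sigma^2)$. This choice makes the contribution of the immediately frozen tail automatically match that of the target, and guarantees the global mass balance. Let $a:[0,T^\ast]\to[0,a_0]$ be the (decreasing) inverse of $T|_{[0,a_0]}$, so that at time $t$ the moving starting positions form $(-a(t),a(t))$. Write $u(t,\cdot)$ for the density of moving particles and $v(t,\cdot)$ for the density of frozen ones. Differentiating
\[
u(t,x)\;=\;\int_{-a(t)}^{a(t)}\rho_0(x_0)\,G_t(x-x_0)\,dx_0
\]
(with $G_t$ the heat kernel) and combining Leibniz terms with $\partial_t G_t=\tfrac12\partial_x^2 G_t$ yields the Stefan-type coupled system
\[
\partial_t u \;=\; \tfrac12\partial_{xx}u \;-\; |a'(t)|\,\rho_0(a(t))\bigl[G_t(x-a(t))+G_t(x+a(t))\bigr], \qquad \partial_t v \;=\; |a'(t)|\,\rho_0(a(t))\bigl[G_t(x-a(t))+G_t(x+a(t))\bigr],
\]
with $u(0,x)=\rho_0(x)\1_{|x|<a_0}$, $v(0,x)=\rho_0(x)\1_{|x|\ge a_0}$, and the terminal conditions $u(T^\ast,\cdot)\equiv 0$ and $v(T^\ast,\cdot)=\tfrac{1}{\sigma\sqrt{2\pi}}e^{-x^2/(2\sigma^2)}$.

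The heart of the proof is the existence of such an $a(\cdot)$. This is a one-phase Stefan-type free-boundary problem, but non-classical in two respects: the ambient domain is the whole line, and the flux of mass leaving the moving phase is spread spatially by $G_t$ rather than being delta-localized at the free boundary. My plan is to recast it as a fixed-point problem on the space of decreasing profiles $a(\cdot)$: time-discretize the release into infinitesimal freezing increments, use the monotone dependence of the deposited mass on the release speed $|a'(t)|$ to set up a monotone iteration, and pass to the limit using parabolic regularity estimates for $u$. The parameters $(a_0,\sigma^2,T^\ast)$ are linked by the crossing and mass constraints, so one is free to fix $\sigma^2>1$ and solve for the rest.

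Once $a(\cdot)$ has been built, define $\varphi(x):=\sqrt{a^{-1}(|x|)}$ on $[-a_0,a_0]$ and $\varphi(x):=0$ outside. This $\varphi$ is non-constant (positive on $(-a_0,a_0)$, vanishing outside), and by the probabilistic interpretation of $v$ the law of $X+\varphi(X)Y$ coincides with $v(T^\ast,\cdot)=\mN(0,\sigma^2)$. The main obstacle is entirely concentrated in the existence step for the free-boundary PDE: the rigidity of the Gaussian terminal condition, the unboundedness of the domain, and the non-local ``smeared'' flux prevent a direct application of classical Stefan theory, and this is precisely what motivates the bespoke Brownian mass transport introduced in the sequel.
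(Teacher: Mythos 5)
Your probabilistic reformulation is valid, but it is \emph{not} the one the paper exploits, and this matters. You freeze each Brownian particle at a deterministic time $T(x_0)=\varphi(x_0)^2$ depending on its \emph{starting} position $x_0$. The paper, following de Meyer--Roynette--Vallois--Yor, instead writes $B_C = B_T + \sqrt{C-T}\,\xi$, sets $X := B_T$, and requires that the stopping time be a function $T = f(B_T)$ of the \emph{stopping} position. These are two genuinely different free-boundary problems: your free boundary lives in the space of initial positions, while the Brownian transport of the paper stops a particle the instant $(t,X_t)$ crosses the graph of $f$, i.e.\ depending on where it currently is. The whole existence machinery of the sequel --- the cost function $\fp$, Theorems~\ref{thm:BT-line}--\ref{thm:BT-interval}, the discretization --- is built for the latter problem and says nothing about yours. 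So deferring your existence step ``to the bespoke Brownian mass transport introduced in the sequel'' is not a move that is available to you.

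Worse, your specific ansatz ($T\equiv 0$ on $\{|x|\ge a_0\}$, so $\varphi\equiv 0$ there) is incompatible with the target Gaussian, independently of what the free boundary does on $[-a_0,a_0]$. Writing $G_t$ for the centred Gaussian density of variance $t$ and integrating the identity $\int \rho_{\mN(0,1)}(x)\,G_{T(x)}(y-x)\,dx = \rho_{\mN(0,\sigma^2)}(y)$ against $e^{-uy}$ gives, for all $u\in\R$,
\[
\int_\R \rho_{\mN(0,1)}(x)\, e^{-ux + u^{2}T(x)/2}\, dx \;=\; e^{u^{2}\sigma^{2}/2}.
\]
With $T\equiv 0$ off $[-a_0,a_0]$, the contribution of $\{|x|>a_0\}$ is $\asymp e^{u^2/2}$, which is $o\!\left(e^{u^2\sigma^2/2}\right)$ because $\sigma^2>1$; Laplace's method on $[-a_0,a_0]$ (with $T$ decreasing from $T^*:=T(0)$ to $0$, whether smooth, cuspy, or with a plateau at the maximum) produces a contribution $\asymp u^{-\gamma}e^{u^2T^*/2+O(u)}$, and no choice of $T^*$ reconciles the sum with $e^{u^2\sigma^2/2}$: the decaying power prefactor $u^{-\gamma}$ makes the left side asymptotically too small when $T^*\le\sigma^2$, and the exponent is already too large when $T^*>\sigma^2$. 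The same Laplace test shows that any admissible $T=\varphi^2$ must in fact satisfy $T(x)\to\sigma^2-1>0$ as $|x|\to\infty$, so $\varphi$ must be bounded \emph{away} from zero at infinity --- which is exactly what the paper's construction delivers, with $\varphi^2=C-f\to C-1$. Finally, the continuous free boundary you seek is believed not to exist at all: a first-order perturbation of the density identity around constant $T\equiv t_0$ yields $(\rho_{\mN(0,1)}\,h)*G_{t_0}=\E[h]\,\rho_{\mN(0,1+t_0)}$, which forces $\rho_{\mN(0,1)}h=\E[h]\,\rho_{\mN(0,1)}$ and hence $h$ constant; and indeed the authors conjecture that Cantelli is \emph{true} for continuous $\varphi$. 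The Cantor-set discontinuity the paper works so hard to build is not a blemish of their method but its essential ingredient, and a scheme that manufactures a continuous $\varphi$ is heading in the wrong direction.
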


In fact, as we will see from the construction in Section \ref
{s:construction}, the function $\varphi$ can be taken to be a
``choice'' between two continuous functions:
\[
\varphi(x)= %
\cases{ \varphi_0(x), & \quad$x\in\compact$,
\vspace*{2pt}
\cr
\varphi_1(x), &\quad $x\notin\compact$,} %
\]
where $\compact$ is a fat Cantor set of positive Lebesgue measure (see
its construction in Section \ref{ss:construction2}) and $\varphi
_0,\varphi_1\in C(\R)$. Actually, the function $\varphi$ we construct
here is discontinuous. We believe that Cantelli conjecture is true if
we impose the continuity of the function $\varphi$, but we have no
proof for that.

\subsection{The Skorokhod embedding problem: Historical context}\label
{ss:history}
The Skorok\-hod embedding problem is the following. For a given centered
probability measure $\mu$ with finite second moment and a Brownian
motion $B$, one looks for an (integrable) stopping time $T$ such that
the distribution law of $B_T$ is $\mu$. Several authors have developed
different techniques to solve this problem, which has stimulated
research in probability theory since the first formulation of
Skorokhod \cite{S}; we present briefly here their results that we need,
largely (except for those appeared after its publication date)
following an excellent survey by Ob\l{}oj \cite{O} (to which we refer
interested reader for more details).


One of the techniques closely related to our problem is Root's barrier,
introduced by Root in \cite{root}. Namely, he suggested to look for the
solution $T(\omega)$ in the form of the moment of the first
intersection of a Brownian trajectory $(B_t(\omega),t)$ with a
\emph{barrier}, that is a supergraph $\{(x,t)\dvtx t\ge f(x)\}$ of some lower
semicontinuous function $f\dvtx\R\to\R_+\cup\{+\infty\}$. He
proved the
(implicit) existence of such a barrier, establishing it with
topological arguments for a finitely supported target measure, and then
passing to the limit.
Soon afterward, Loynes \cite{L} has shown the uniqueness of the Root
barrier. Then Rost \cite{rost} has introduced the concept of one
measure being ``earlier'' than an other one, that is, a Brownian motion
starting with $\mu_0$ can be stopped with the law $\mu_1$, introducing
a filling process to check it.

Chacon \cite{cha} has introduced the notion of potential $U$. It turns
out that the convolutions $U_t$ of the function $|x|$ with the
occupation measures $\mu_t$ at time $t$ of a martingale $X_t$ (in
particular for a Brownian motion stopped at time $T$) form a monotonous
family of functions. McConnell \cite{MC} related these potential
functions to the Stefan problem: a particular type of a PDE, introduced
in 1831 by Lam\'e and Clapeyron as a model of melting ice (see the
survey of Vuik \cite{V} for details).

In his seemingly unpublished work, Rost has considered inverse barriers
(see~\cite{M} or \cite{O}, Section 7.3). Such barriers have also been
studied in \cite{CoxP}. Cox and Wang \cite{CoxW} have further studied
Root barriers, in particular, developing the case of a non-Dirac
initial measure $\mu_0$. They have also studied the Stefan-type PDE
relating the potential and the barrier. Finally, Ankirchner and
Starck \cite{AS} have studied the conditions for the stopping time to
be bounded.

\section{Construction} \label{s:construction}

\subsection{Construction: First step}\label{ss:construction1}
The first step in the proof of Theorem \ref{t:main} is the following
idea, close to \cite{DRVY}. Consider the standard Brownian motion
$(B_t, t\ge0)$, and let $T=T(\om)$ be a stopping time [w.r.t. the
standard family $(\mF_t, t\ge0)$ of $\sigma$-algebras], such that
$T<C$ almost surely for some constant $C$. Then
%
%
\begin{equation}
\label{eq:BT} B_C=B_T+ (B_C-B_T)
= B_T + \sqrt{C-T} \cdot\xi,
\end{equation}
where the random variable $\xi:=\frac{B_C-B_T}{\sqrt{C-T}}$ is a
standard Gaussian variable $\mN(0,1)$ and is independent from $B_T$ due
to the Markov property.

Now note that $B_C$ is a Gaussian random variable, so
%
%
\begin{equation}
B_T + \sqrt{C-T} \cdot\xi\sim\mN(0,C),\qquad B_T \perp\!\!\!
\perp\xi, \xi\sim\mN(0,1).
\end{equation}
Compare it to what we need to prove Theorem \ref{t:main} (and hence to
disprove the Cantelli conjecture):
%
%
\begin{equation}
X+\varphi(X)\cdot Y \sim\mN(0,\cdot),\qquad X\perp\!\!\!\perp Y, X,Y\sim
\mN(0,1).
\end{equation}
This comparison immediately gives us the following conclusion.
%

\begin{proposition}\label{p:time}
Let $T=T(\om)$ be a nonconstant stopping time for the standard Brownian
motion $(B_t, t\ge0)$, and assume that the following holds:
\begin{longlist}[(iii)]
\item[(i)]$\exists C\dvtx\forall\om T(\om)<C$;
\item[(ii)] The law of $B_T$ is the standard Gaussian law: $B_T\sim\mN(0,1)$;
\item[(iii)]
There exists a measurable function $f\dvtx\R
\to\R
_+$, such that almost surely $T=f(B_T)$.
\end{longlist}
Then the function $\varphi(x)=\sqrt{C-f(x)}$ provides us a
counterexample to the Cantelli conjecture.
\end{proposition}

Indeed, using the latter result that will prove Theorem \ref{t:main} in
Section \ref{s:ideas}, we will construct a nonconstant stopping time
satisfying the assumptions of Proposition~\ref{p:time}.
%

\begin{remark*}
There is one subtlety with property (iii) that we would
like to emphasize. While this property \emph{says} that the stopping
moment $T$ should be equal to a function of the place $B_T$ where the
process was stopped, it \emph{does not} say that we should stop the
process immediately once the equality $t=f(B_t)$ is satisfied.
Moreover, for the construction in the proof of Theorem \ref{t:main}, it
is \emph{not} true that $T=\min\{t\colon t=f(B_t) \}$.
\end{remark*}

\subsection{Brownian transport} \label{ss:BT}

Proposition \ref{p:time} naturally leads us to the following definition.
%

\begin{definition}
Let $\mu_0,\mu_1$ be two probability measures, with the same mean and
square integrable. We say that there exists a \emph{Brownian transport} from $\mu_0$ to $\mu_1$ if, for a random process $(X_t, t\ge0)$ such
that $X_0\sim\mu_0$ and $\mathrm{d}X_t =\mathrm{d}B_t$ (where $B$
is a
real Brownian motion independent of $\mu_0$), one can find a stopping
time $T$ and a function $f$ such that:
\begin{longlist}[(ii)]
\item[(i)]$X_T \sim\mu_1$,
\item[(ii)] a.s. $T=f(X_T)$.
\end{longlist}
We say that $f$ is the \emph{stopping function} of this transport.

If the stopping time $T$ has finite expectation, then we say that there
exists a \emph{finite expectation Brownian transport}.

If the function $f$ is bounded or continuous, we speak, respectively, of
\emph{bounded} or \emph{continuous Brownian transport} from $\mu_0$
to $\mu_1$.
\end{definition}

%
\begin{remark*}
Moreover, if the function $f$, corresponding to a bounded Brownian
transport, can be taken to be continuous, then the moment $T$ is the
first intersection time of the trajectory $(X_t, t\ge0)$ with the
graph of $f$:
\[
T(\om) = \inf\bigl\{t\ge0\dvtx t=f(X_t)\bigr\}.
\]
\end{remark*}

In other words, the case of a continuous bounded Brownian transport is
always described by a Root barrier (see \cite{root}) for the
corresponding Skorokhod problem. In this case, such a transport is
unique (due to Loynes), though both assumptions (continuity and
boundedness) are essential. An unbounded solution can easily correspond
to, for instance, Rost inverse barrier (see \cite{rost1}). Moreover, it
can be shown that there exist square integrable measures $\mu$ given by
the Rost solution corresponding to the inverse barrier $\{t\le\varphi
(x)\}$ with a continuous sufficiently quickly growing function $\varphi
$. On the other hand, such $\mu$ can be chosen to fulfill the
assumptions of Theorem \ref{thm:BT-line} below, and thus can also be
obtained by a continuous Brownian transport corresponding to the Root
barrier solution. This shows nonuniqueness of a continuous Brownian
transport, even with the additional assumption of finiteness of
expectation. Finally, the construction we propose in Section \ref
{ss:construction2} shows that bounded Brownian transport (without the
assumption of continuity of the stopping function) is highly nonunique.

Note also that a bounded Brownian transport between two given square
integrable measures $\mu_0,\mu_1$ does not always exist. An obvious
restriction for its existence is that one should necessarily have $\E
\mu_0 = \E\mu_1$ and $\Var\mu_0 \le\Var\mu_1$, though this
condition is far from being sufficient. For instance, one can easily
see that $\mu_1$ cannot have atoms (unless $\mu_0$ charges the same
points with at least the same mass), and that the bounded Brownian
transport cannot create ``holes'' inside the support: a necessary
condition is $\operatorname{Supp}(\mu_0) \subset\operatorname{Supp}(\mu_1)$.

A finer, but much more restrictive, necessary condition is that the
potential functions $\fp_{\mu_0}$ and $\fp_{\mu_1}$ (in the sense of
Section \ref{s:ideas}, or what is almost the same, of Chacon~\cite{cha}
and Cox--Wang \cite{CoxW}), corresponding, respectively, to $\mu_0$ and
$\mu_1$, should satisfy $\fp_{\mu_0} \le\fp_{\mu_1}$ on the real line.

Finally, even such a positivity and the condition on the supports are
not sufficient: taking the measure $\mu_1$ to be the first intersection
measure with the graph $\{t=\frac{1}{|x|}\}$ of the function $\varphi
(x) =\frac{1}{|x|}$,\vspace*{1pt} we see that (due to the uniqueness by Loynes)
there is no continuous bounded Brownian transport for such a $\mu_1$.
Moreover, from~\cite{AS}, one sees that a necessary condition for a
bounded Brownian transport from $\delta_0$ to $\mu_1$ to exist is that
there are no ``too weakly charged'' intervals for $\mu_1$ [compare with
assumptions (iii) of Theorems \ref{thm:BT-line} and \ref{thm:BT-interval}].

However, in Section \ref{s:ideas}, we will state two theorems
establishing sufficient conditions for the existence of a continuous
finite expectation Brownian transport on an interval and on the real line.

\subsection{Construction: Second step}\label{ss:construction2}
We can now describe how the stopping time~$T$, satisfying the
assumptions of Proposition \ref{p:time}, will be constructed. We will
fix a moment $t_0\in(0,1)$ and choose in a small neighborhood of the
origin a fat Cantor set $\compact\subset\R$ of positive Lebesgue
measure (with some restrictions on its geometry), such that on this set
the density of the law $\mN(0,1)$ is everywhere upper bounded by the
density of the law $\mN(0,t_0)$:
\[
\rho_{\mN(0,t_0)}(x)> \rho_{\mN(0,1)}(x)\qquad \forall x\in\compact.
\]
Then, at the moment $t_0$, for any $x\in\compact$, we stop the
proportion $\frac{\rho_{\mN(0,1)}(x)}{\rho_{\mN(0,t_0)}(x)}$ of
all the
trajectories passing through $x$ at this moment. To do so, one can
either use a probabilistic Markov time, modifying the initial
probability space of the Brownian motion by multiplying it by $[0,1]$, or
note that the random variable $S_{t_0}(\om):=\sup_{0\le t\le t_0}
|B_t(\om)|$ has a continuous conditional distribution w.r.t. any
condition $B_{t_0}=x$, and hence, denoting by $\vk(\alpha,x)$ the
$\alpha$-quantile of the corresponding conditional distribution [that
is the value $y$ such that $\bbP(S_{t_0}\le y| B_{t_0}=x) \ge\frac
{1}{\alpha}$], we can put
%
%
\begin{equation}
\qquad T(\om) = t_0\qquad \mbox{if } x:=B_{t_0}(\om)\in\compact\mbox{ and } S_{t_0}(\om) \le\vk\biggl(\frac{\rho_{\mN(0,1)}(x)}{\rho
_{\mN
(0,t_0)}(x)},x \biggr).
\end{equation}
This stopping ensures that the transport time $T$ and the corresponding
function $f$ are nonconstant: there is something left to transport.

The following problem now remains. At the moment $t_0$, there is a
conditional distribution of not yet stopped trajectories, with the density
%
%
\begin{equation}
\label{eq:rho} \rho_0(x)= %
\cases{ c^{-1}
\rho_{\mN(0,t_0)}(x), &\quad $x\notin\compact$, \vspace*{2pt}
\cr
c^{-1}
\bigl(\rho_{\mN(0,t_0)}(x)-\rho_{\mN(0,1)}(x) \bigr), &\quad $x\in
\compact$,}
\end{equation}
where $c=\mathbb{P}(\mathcal{N}(0,1) \notin\compact)$.
We want to stop these trajectories at a bounded stopping time $T$, such that:
\begin{longlist}[(ii)]
\item[(i)]$T=f(B_T)$,
\item[(ii)] the law of $B_T$ conditionally to $T>t_0$ is the restriction (to
$\R\setminus\compact$) of the standard Gaussian law $\mN(0,1)|_{\R
\setminus\compact}$.
\end{longlist}

In other words, we are looking for a solution of the following.
%

\begin{pb}\label{pb:1}
Find a bounded Brownian transport from $\mu_0=\rho_0 \,\mathrm{d}x$,
given by~\eqref{eq:rho}, to $\mu_1$ which is the conditional
distribution of $\mN(0,1)$ on $\R\setminus\compact$.
\end{pb}

Indeed, once Problem \ref{pb:1} is solved with the bounded stopping
time $T_1$ such that $T_1 = f_1(B_{T_1})$, we can take for the original problem
%
%
\begin{equation}
\label{eq:T} T(\om)= %
\cases{ t_0, & \quad $\mbox{if }
x:=B_{t_0}(\om)\in\compact\mbox{ and}$\vspace*{2pt}\cr
&\quad$\displaystyle S_{t_0}(\om) \le\vk
\biggl(\frac{\rho_{\mN(0,1)}(x)}{\rho_{\mN(0,t_0)}(x)},x\biggr
)$, \vspace*{2pt}
\cr
t_0+T_1,
&\quad $\mbox{otherwise}$,} %
\end{equation}
where $T_1$ is evaluated on the trajectory $X_t = B_{t_0+t}$. We then have
%
%
\begin{equation}
\label{eq:f} f(x)= %
\cases{ t_0, &\quad $\mbox{if } x\in
\compact$, \vspace*{2pt}
\cr
t_0+f_1(x), &\quad $\mbox{if } x
\notin\compact$. } %
\end{equation}

%
\begin{remark*}
It is important to note that, due to the choice of the ``target
measure'' $\mu_1$, the stopping point of the process $(X_t,t\ge0)$ a.s.
does not belong to $\compact$. Hence, even though in \eqref{eq:f}, the
function $f$ on $\compact$ does not coincide with $t_0+f_1(x)$, the
equality $T=f(B_T)$ still a.s. holds for the trajectories not yet
stopped at time~$t_0$.
\end{remark*}

To solve Problem \ref{pb:1}, we prove a sufficient condition for a more
general result (that we have already mentioned in Section \ref{ss:BT}),
establishing the continuity (and thus local boundedness) of the
corresponding Root barrier. Then, further studying the barrier function
$f_1$ in this particular case, we show that this function has a limit
at infinity, and thus is globally bounded. This proves the following.
%

\begin{theorem}\label{t:sol-pb}
Assume that $\compact\subset[-1,1]$ and that there exists $\alpha>0$
such that, for any interval $I\subset[-1,1]$, one has $\operatorname{Leb}(I\setminus
\compact) \ge\exp\{-\alpha/ |I|\}$. Then there exists a solution
$T_1$ to Problem \ref{pb:1} and the corresponding function $f_1$ is
continuous. Moreover, $T_1$ can be represented as a ``first
intersection'' moment
\[
T_1(\om) = \inf\bigl\{t\ge0\dvtx t=f_1(X_t)
\bigr\}.
\]
\end{theorem}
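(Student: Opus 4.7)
The plan is to apply Theorem~\ref{thm:BT-line} to the measures $\mu_0,\mu_1$ of Problem~\ref{pb:1}, and then upgrade the resulting (a priori only finite-expectation) Brownian transport to a bounded one by controlling the behaviour of the stopping function $f_1$ at infinity.

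The crucial algebraic observation is that
$$
\mu_0 - \mu_1 \;=\; c^{-1}\bigl(\mN(0,t_0)-\mN(0,1)\bigr)
$$
as signed measures, since the contributions on $\compact$ cancel exactly. This reduces hypothesis (iv) of Theorem~\ref{thm:BT-line} to the strict positivity of $\fp_{\mN(0,t_0)\rightarrow\mN(0,1)}$, which is a standard consequence of the Gaussian convex order for $t_0<1$. Similarly, hypothesis (v) reduces, for $|x|>1$, to $\rho_{\mN(0,t_0)}(x)/\rho_{\mN(0,1)}(x) \sim t_0^{-1/2}\exp\bigl(-(1/t_0-1)x^2/2\bigr)\to 0$. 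Hypotheses (i) and (ii) are direct: $\rho_{\mu_1}$ is globally bounded by $(c\sqrt{2\pi})^{-1}$, while $\rho_{\mu_0}$ admits a lower bound on $[-R,R]$ combining the Gaussian lower bound off $\compact$ and the positive minimum, on the compact set $\compact$, of the continuous positive function $\rho_{\mN(0,t_0)}-\rho_{\mN(0,1)}$. Hypothesis (iii) is the place where the Cantor-set assumption enters: for $J\subset[-1,1]$ one has $\mu_1(J)\ge c^{-1}\rho_{\mN(0,1)}(1)\cdot Leb(J\setminus\compact) \ge c'e^{-\alpha/|J|}$, and for $J\subset[-R,R]$ meeting the complement of $[-1,1]$ a direct bound from $Leb(J\setminus[-1,1])$ takes over; a short case analysis yields a suitable $\alpha_R$.

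Applying Theorem~\ref{thm:BT-line} then gives a Brownian transport from $\mu_0$ to $\mu_1$, realized as a first intersection $T_1=\inf\{t\ge 0:\,t=f_1(X_t)\}$ with the graph of a continuous function $f_1$. The main remaining obstacle, which is not covered by Theorem~\ref{thm:BT-line} itself, will be to show that $f_1$ is \emph{bounded}, so that the transport solves Problem~\ref{pb:1}. This is the content of \S\ref{ss:ideas-line}: the super-exponential decay of $\rho_{\mu_0}/\rho_{\mu_1}$ arising from $t_0<1$ forces $f_1(x)$ to stay bounded as $|x|\to\infty$. The underlying intuition is that far from $\compact$ the pair $(\mu_0,\mu_1)$ essentially coincides with $(\mN(0,t_0),\mN(0,1))$, which is transported by the trivial constant time $1-t_0$. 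Once $\sup_x f_1(x)\le M$ is established, the inequality $T_1\le M$ is automatic: the continuous function $t\mapsto f_1(X_t)-t$ is non-negative at $t=0$ and strictly negative for $t>M$, so its first zero lies in $[0,M]$.
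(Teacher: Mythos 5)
Your overall strategy is the same as the paper's: verify the hypotheses of Theorem~\ref{thm:BT-line} for $\mu_0,\mu_1$ (this is the paper's Lemma~\ref{l:hyp}, and your key identity $\mu_0-\mu_1=c^{-1}(\mN(0,t_0)-\mN(0,1))$ is exactly the observation the paper uses), obtain a continuous stopping function $f_1$ from Theorem~\ref{thm:BT-line}, and then prove that $f_1$ is \emph{bounded}. Your hypothesis-checking is sound, and your final remark that $\sup f_1\le M$ implies $T_1\le M$ is correct.

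The genuine gap is that you never actually prove the boundedness of $f_1$; you only gesture at it. The sentence ``the super-exponential decay of $\rho_{\mu_0}/\rho_{\mu_1}$ arising from $t_0<1$ forces $f_1(x)$ to stay bounded as $|x|\to\infty$'' is not a valid deduction: hypothesis~v) of Theorem~\ref{thm:BT-line} only delivers a transport with ``possibly unbounded'' $T$, and a small density ratio alone does not bound the stopping function, because the defect created near $\compact$ propagates outward by diffusion. The paper's Proposition~\ref{p:maj}, together with the chain of Lemmas~\ref{l:encadrement}--\ref{l:points}, is precisely what converts the Gaussian tail estimates into a quantitative bound $1-t_0\le f_1(x)\le 1-t_0+e^{-\beta x^2}$: one first controls the residual density $\rho_{1-t_0}$ by the convolution argument of Lemma~\ref{l:encadrement}, then the cost function $\fp_{1-t_0}$ in Lemma~\ref{l:estimate}, then shows via a heat-equation eigenfunction comparison (Lemma~\ref{l:points}) that $f_1$ must dip below $1-t_0+\ell(x)$ in every window of width $\ell(x)=e^{-\beta_0x^2/2}$, and finally applies Lemma~\ref{l:decomp-I} to the remaining short interval. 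Deferring this to ``the content of \S\ref{ss:ideas-line}'' is circular in a blind attempt, since that section \emph{is} the proof of the statement in question; the heuristic that ``far from $\compact$ the pair essentially coincides with $(\mN(0,t_0),\mN(0,1))$, transported by the constant time $1-t_0$'' is correct intuition but does not control the error coming from $\compact$, which is exactly what the omitted lemmas quantify.
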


Figure \ref{fig:5} shows a simulation of the functions $f_1$ and
$\varphi$ (that one can do thanks to an almost explicit nature of our
construction).

%
\begin{figure}[b]

\includegraphics{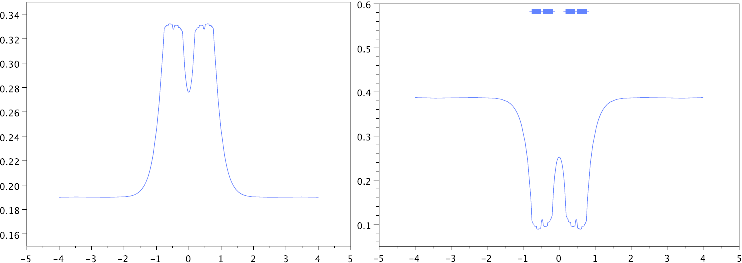}

\caption{On the left: the graph of the function $f_1$. On the right:
the graph of the resulting function~$\varphi$.}\label{fig:5}
\end{figure}

It is not difficult to construct a compact set $\compact$ satisfying
the assumptions of Theorem \ref{t:sol-pb}. Actually, if in the standard
construction of the Cantor set, one chooses to remove on the $n$th step
an $\frac{1}{(n+1)^2}$th part around the middle of the previously
constructed intervals, the obtained Cantor set $\compact$ satisfies
these assumptions. Moreover, for this Cantor set, an even stronger
estimate holds: $\operatorname{Leb}(I\setminus\compact) \ge\alpha|I|^2$ for some
universal constant $\alpha$.

Once such a set $\compact$ is constructed, the above arguments allow us
to deduce Theorem \ref{t:main} from Theorem \ref{t:sol-pb}. So, the
task of disproving the Cantelli conjecture is reduced to proving
Theorem \ref{t:sol-pb}.

\subsection{Results for the transport problem}\label{s:skoro}

Even though our stopping times appearing in Theorem \ref{t:sol-pb} as
well as in Theorems \ref{thm:BT-line} and \ref{thm:BT-interval} below
\emph{are} (due to the uniqueness by Loynes) Root stopping times, we
cannot obtain their existence directly from Root's result. The problem
here is that we need a \emph{bounded} (and preferably \emph{continuous}) stopping function, and Root's function is only lower semicontinuous.

The second main result of the paper is the following.

%
\begin{theorem}\label{thm:BT-line}
Let $\mu_0,\mu_1$ be two centered probability measures, square
integrable and which support is $\R$. Suppose that, for any $R$ large
enough, the troncated probability measures $\tilde{\mu}_0^R = \frac
{\mu
_0|_{[-R,R]}}{\mu_0([-R,R])}$ and $\tilde{\mu}_1^R = \frac{\mu
_1|_{[-R,R]}}{\mu_1([-R,R])}$ satisfy:
\begin{longlist}[(iii)]
\item[(i)]$\tilde{\mu}_0^R$ and $\tilde{\mu}_1^R$ are absolutely continuous
with respective densities $\rho_{\mu_0}$ and~$\rho_{\mu_1}$,
\item[(ii)] there exist $a_R,b_R>0$ such that for all $-R\le x\le R$, we have
$\rho_{\mu_0}(x) \ge a_R$ and $\rho_{\mu_1}(x) \le b_R$,
\item[(iii)] there exists $\alpha_R>0$ such that for any $J\subset[-R,R]$, we
have $\mu_1(J) \ge e^{-\alpha_R /|J|}$.
\end{longlist}
Assume also that:
\begin{longlist}[(iii)]
\item[(iv)] for any $x\in\R$, we have $\fp_{\mu_0\rightarrow\mu_1}(x):= \int_{-\infty}^x (\mu_0-\mu_1)((-\infty,s]) \,\mathrm{d}s>0$,
\item[(v)] $\limsup_{|x|\rightarrow+\infty} \frac{\rho_{\mu
_0}(x)}{\rho
_{\mu_1}(x)} <1$.
\end{longlist}
Then there exists a finite expectation continuous Brownian transport
from $\mu_0$ to $\mu_1$, with a possibly unbounded stopping time $T$.
Moreover, this Brownian transport is given by the first intersection
time with the graph the stopping function $f$.
\end{theorem}


%
\begin{remark*}
We can actually suppose in the latter theorem that the measures $\mu
_0,\mu_1$ have the same mean (instead of being centered).
\end{remark*}

An analogous question can be also asked for measures supported on an
interval. This question, on one hand, turns out to be a bit simpler
than the real line one (due to the compactness and lack of effects at
infinity). On the other hand, it becomes one of the steps in our proof
of Theorem \ref{thm:BT-line}: the function $f$ is constructed as a
limit of a subsequence of functions $f_R$ corresponding to a
``cut-off'' problem. The corresponding theorem is
the following.
%

\begin{theorem}\label{thm:BT-interval}
Let $\mu_0,\mu_1$ be two probability measures, with the same mean,
square integrable and which support is an interval $I\subset\R$.
Suppose that they satisfy the hypotheses:
\begin{longlist}[(iii)]
\item[(i)]$\mu_0$, $\mu_1$ are absolutely continuous with respective
densities $\rho_{\mu_0}$, $\rho_{\mu_1}$,
\item[(ii)] there exist $a,b>0$ such that for all $x\in I$, we have $\rho
_{\mu
_0}(x) \ge a$ and $\rho_{\mu_1}(x) \le b$,
\item[(iii)]
there exists $\alpha>0$ such that for any interval
$J\subset I$, we have $\mu_1(J) \ge e^{-\alpha/|J|}$,
\item[(iv)] for all $x\in I$, we have $\fp_{\mu_0\rightarrow\mu_1}(x):=
\int_{-\infty}^x (\mu_0-\mu_1)((-\infty,s]) \,\mathrm{d}s>0$,
\item[(v)]$\rho_{\mu_0}>\rho_{\mu_1}$ in some inner neighborhood
$\mathcal
{U}_\varepsilon(\partial I)\cap I$.
\end{longlist}
Then there exists a bounded Brownian transport from $\mu_0$ to $\mu_1$,
given by the first intersection time with the graph of some continuous
function $f$.
\end{theorem}

In other words, under the respective assumptions, Theorems \ref
{thm:BT-line} and \ref{thm:BT-interval} state that the Root barrier
corresponding to the transport of $\mu_0$ to $\mu_1$ is given by a
continuous function.

The proof of these two results will be done in several steps. First, we
will do some a priori estimates and transformations, answering the
question ``assuming that such a transport exists, how should it look
like?'' The understanding coming from these steps will leave us with
some kind of a PDE problem, of the Stefan type.

However, we could not establish the existence theorems for this problem
directly by PDE methods (in fact, it seems to be an interesting
question to us), we establish them via a discretization procedure: we
solve an analogous discrete problem and pass to the limit as the mesh
goes to 0. This part is rather technical and is postponed
to Section \ref{s:B-transport-interval}.

%
\begin{remark*}
Some assumptions of Theorems \ref{thm:BT-line} and \ref
{thm:BT-interval} seem nonrestrictive, such as the positivity of $\fp
_{\mu_0 \rightarrow\mu_1}$ (inside $I$ for Theorem \ref
{thm:BT-interval}). Indeed, a necessary condition is that the function
$\fp$ is nonnegative (see Corollary \ref{l:pos-phi}). Though, in the
case of a nonnegative function $\fp$ that is not positive everywhere
inside $I$, one can simply split the interval $I$ into the intervals of
positivity of $\fp$ (see Lemma \ref{l:decomp-I}).
Other assumptions, such as (iii), seem unavoidable in order to
assure the uniform boundedness of the stopping time. Indeed, otherwise
the first intersection measure of the Brownian motion with the graph\vspace*{1pt} of
$f(x) = \frac{1}{|x|}$ would satisfy the assumptions of the theorem.
Finally, some assumptions (such as the absolute continuity of $\mu_0$
or the lower bound for its density) could be weakened. 
But we are not doing it in the present work: the statement of
Theorem \ref{thm:BT-interval} suffices for our construction.
\end{remark*}

\section{Tools: The potential function \texorpdfstring{$\fp$}{Phi} and some a priori
arguments}\label{s:ideas}

In the following, for a regular time--space function $\fp\dvtx \R
_+\times\R
\rightarrow\R$, $(t,x) \mapsto\fp_t(x)$, we denote by $\dot{\fp}_t(x)
= \partial_t \fp_t(x)$ its time-derivative.
Moreover, for an absolutely continuous measure $\mu$, we denote by
$\rho
_\mu$ its density distribution function. The $\varepsilon$-neighborhood
of a set $I$ is denoted by $\mathcal{U}_\varepsilon(I)$. As all the
objects we consider in this section are invariant by a translation, we
will suppose that the measures $\mu_0, \mu_1$ are centered.

\subsection{The potential function \texorpdfstring{$\fp$}{Phi} and Stefan-type
problem}\label
{ss:stefan}
Before going deeper into the proof of the existence theorems
(Theorems \ref{t:sol-pb}, \ref{thm:BT-line} and \ref{thm:BT-interval}),
let us first do some a priori arguments. Namely, assuming that a finite
expectation Brownian transport from some centered measure $\mu_0$ to
some other centered measure $\mu_1$ exists (both $\mu_0$, $\mu_1$
having a finite second moment), let us find out what could be its
properties and how could it be described.

Chacon has introduced in \cite{cha} the potential $U$, that is the
convolutions of the function $|x|$ with the occupation measures at
time $t$ of a martingale. In our setting, the following definition,
corresponding to the convolutions of the function $|x|_+:=\max(0,x)$
with the occupation measures at time $t$ seems to be easier to work
with (though, they are related with an affine change).
%

\begin{definition}
Let $\mu$ be a measure on $\R$, with finite second moment. Then we
denote by $\fp_\mu$ the primitive of its distribution function $F_\mu
(x):= \mu((-\infty,x])$:
%
%
\begin{equation}
\label{eq:phi_mu} \fp_\mu(x):= \int_{-\infty}^x
\mu\bigl((-\infty,s]\bigr) \,\mathrm{d}s = |x|_+ * \mu.
\end{equation}
\end{definition}

An easy computation then shows that
%
%
\begin{equation}
\label{eq:def-phi_mu} \fp_\mu(x) 
= x -
\E(\mu) +\int_x^{+\infty} \mu\bigl([s,+\infty)\bigr)
\,\mathrm{d}s.
\end{equation}

In particular, for any two such measures $\mu_0,\mu_1$ with the same
mean, the difference between the corresponding functions
%
%
\begin{equation}
\label{eq:def-diff-phi} \fp_{\mu_0 \rightarrow\mu_1} (x):= \fp
_{\mu_1}(x) -
\fp_{\mu_0}(x)
\end{equation}
converges to 0 as $x$ tends to $-\infty$ and as $x\to+\infty$. [A
reader familiar with the Chacon's potential easily notices that $\fp
_{\mu_0 \rightarrow\mu_1} =\frac{1}{2}(U_{\mu_1}-U_{\mu_0})$ due to
the affine relation between $\Phi_{\mu}$ and $U_{\mu}$.]

The role of $\fp$ is then given by the following conclusions, going
back to Chacon~\cite{cha}. Let $(X_t,T)$ be a finite expectation
Brownian transport from $\mu_0$ to $\mu_1$. Denote by $\tilde{X}_t:=
X_{t\wedge T}$ the ``stopped'' process, by $\tilde{\nu}_t$ its
distribution law at time $t$, and by $\nu_t$ the (nonprobability)
measure given by the ``not yet stopped'' particles: for any Borel set
$A$, we have
\[
\nu_t (A) = \bbP(X_t \in A, t<T).
\]
%

\begin{lemma}\label{eq:derive-phi}
$
\dot{\fp}_{\tilde{\nu}_t}= \frac{1}{2}\rho_{\nu_t}$.
\end{lemma}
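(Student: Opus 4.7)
The plan is to represent $\fp_\mu$ probabilistically and then apply Tanaka's formula to the stopped process. From the second line of~\eqref{eq:def-phi_mu} one reads off that $\fp_\mu(x) = \E[(x-Y)_+]$ for $Y\sim\mu$, so $\fp_{\tilde{\nu}_t}(x) = \E[(x-\tilde{X}_t)_+]$. The stopped process $\tilde{X}_t = X_{t\wedge T}$ is a continuous martingale with quadratic variation $d\langle\tilde{X}\rangle_s = \1_{s<T}\, ds$. Applying Tanaka's formula to the convex function $u\mapsto (x-u)_+$ would give
\begin{equation*}
(x-\tilde{X}_t)_+ = (x-X_0)_+ - \int_0^t \1_{\tilde{X}_s\le x}\, d\tilde{X}_s + \tfrac12\, L^x_t(\tilde{X}).
\end{equation*}
The stochastic integral has bounded integrand and $\langle\tilde{X}\rangle_\infty = T$ has finite expectation, so it is a true mean-zero martingale; taking expectations therefore gives the clean identity
\begin{equation*}
\fp_{\tilde{\nu}_t}(x) - \fp_{\mu_0}(x) = \tfrac12\, \E[L^x_t(\tilde{X})].
\end{equation*}

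The second step is to identify $\E[L^x_t(\tilde{X})]$ with $\int_0^t \rho_{\nu_s}(x)\, ds$. For this I would invoke the occupation times formula for the semimartingale $\tilde{X}$: for every nonnegative Borel $g$,
\begin{equation*}
\int_\R g(y)\, L^y_t(\tilde{X})\, dy \;=\; \int_0^t g(\tilde{X}_s)\, d\langle\tilde{X}\rangle_s \;=\; \int_0^{t\wedge T} g(X_s)\, ds.
\end{equation*}
Taking expectations and using $\nu_s(A)=\bbP(X_s\in A,\ s<T)$ together with Fubini would yield
\begin{equation*}
\int_\R g(y)\, \E[L^y_t(\tilde{X})]\, dy \;=\; \int_\R g(y)\!\left(\int_0^t \rho_{\nu_s}(y)\, ds\right) dy,
\end{equation*}
so $\E[L^x_t(\tilde{X})] = \int_0^t \rho_{\nu_s}(x)\, ds$ for a.e.\ $x$. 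Substituting into the previous display and differentiating in $t$ delivers the claimed identity $\dot{\fp}_{\tilde{\nu}_t}(x) = \tfrac12\,\rho_{\nu_t}(x)$.

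The main technical obstacle I anticipate is making rigorous sense of the pointwise time derivative: the integrated identity $\fp_{\tilde{\nu}_t}(x) = \fp_{\mu_0}(x)+\tfrac12\int_0^t \rho_{\nu_s}(x)\, ds$ holds only for a.e.\ $x$ at each $t$, so the cleanest reading of the lemma is distributional, upgraded to a pointwise equality at Lebesgue points of $s\mapsto \rho_{\nu_s}(x)$. In the setting used later (Theorems~\ref{thm:BT-line} and~\ref{thm:BT-interval}), $\mu_0$ is assumed absolutely continuous with a bounded density, and this regularity propagates to $\rho_{\nu_s}$ through the Brownian transition kernel, so the pointwise statement causes no difficulty. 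The Tanaka step itself, as well as the measurability/Fubini argument for $\E[L^x_t(\tilde{X})]$, are entirely routine.
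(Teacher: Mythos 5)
Your argument is correct, and conceptually it is the same as the paper's: the paper simply writes ``we have $\mathrm{d}\tilde{X}_t = \1_{t<T}\,\mathrm{d}B_t$ and hence by the heat equation, $\dot{\fp}_{\tilde{\nu}_t}=\frac12\rho_{\nu_t}$,'' i.e.\ a one-line appeal to the Fokker--Planck/heat evolution of the stopped diffusion. Your Tanaka-plus-occupation-times computation is precisely the standard way to make that one line rigorous (it is the stochastic-calculus form of ``integrate the heat equation twice in $x$''), so you are not taking a different route, only supplying the details the authors omit.
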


\begin{pf}
Indeed, we have $\mathrm{d}\tilde{X}_t = \1_{t<T} \,\mathrm{d}B_t$ and
hence by the heat equation, we have
$
\dot{\fp}_{\tilde{\nu}_t}= \frac{1}{2}\rho_{\nu_t}$.
\end{pf}

An immediate corollary to this lemma is the following.
%

\begin{corollary}\label{l:pos-phi}
Let $\mu_0,\mu_1$ be two centered absolutely continuous probability
measures, with finite second moment. Suppose that there exists a finite
expectation Brownian transport from $\mu_0$ to $\mu_1$. Then, for any
$x\in\R$, we have $\fp_{\mu_0 \rightarrow\mu_1}(x) \ge0$.
\end{corollary}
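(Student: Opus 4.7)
The plan is to exploit monotonicity in $t$ of $\fp_{\tilde\nu_t}(x)$ that Lemma~\ref{eq:derive-phi} makes manifest: since the ``not yet stopped'' measure $\nu_t$ is positive, its density $\rho_{\nu_t}$ is non-negative pointwise, hence $\dot\fp_{\tilde\nu_t}(x)=\tfrac12\rho_{\nu_t}(x)\ge 0$. So for every fixed $x\in\R$, the map $t\mapsto \fp_{\tilde\nu_t}(x)$ is non-decreasing. At $t=0$ we have $\tilde\nu_0=\mu_0$, and I will show that as $t\to+\infty$ the function $\fp_{\tilde\nu_t}(x)$ converges to $\fp_{\mu_1}(x)$. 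Combining these two facts yields $\fp_{\mu_0}(x)\le \fp_{\mu_1}(x)$, which is exactly $\fp_{\mu_0\to\mu_1}(x)\ge 0$.

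To make the limit rigorous, I would use the probabilistic form of $\fp_\mu$ suggested by~\eqref{eq:def-phi_mu}, namely
\[
\fp_\mu(x)=\int_{-\infty}^x(x-y)\,\mathrm d\mu(y)=\E\bigl[(x-Z)^+\bigr],\qquad Z\sim\mu.
\]
Applied to $Z=\tilde X_t=X_{t\wedge T}$ this gives $\fp_{\tilde\nu_t}(x)=\E[(x-\tilde X_t)^+]$. Since $T<\infty$ almost surely (as $\E T<\infty$), we have $\tilde X_t\xrightarrow[t\to\infty]{\text{a.s.}}X_T\sim\mu_1$, so the integrand converges pointwise a.s.\ to $(x-X_T)^+$.

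The main technical point is passing the limit under the expectation, i.e.\ establishing uniform integrability of the family $\{(x-\tilde X_t)^+\}_{t\ge 0}$. This follows from the martingale property of $\tilde X_t$: $(\tilde X_t-X_0)^2-(t\wedge T)$ is a martingale, so $\E[(\tilde X_t-X_0)^2]=\E[t\wedge T]\le \E T<\infty$, uniformly in $t$. Together with $X_0\in L^2$ (since $\mu_0$ has finite second moment), this bounds $\{\tilde X_t\}$ in $L^2$, hence Doob's maximal inequality makes the family $\{|x-\tilde X_t|\}_{t\ge 0}$ uniformly integrable (being dominated by $|x|+\sup_{s\ge 0}|\tilde X_s|\in L^2$). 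Dominated convergence then gives $\fp_{\tilde\nu_t}(x)\to \fp_{\mu_1}(x)$, closing the argument.

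I do not anticipate any real obstacle: the conceptual content is all in Lemma~\ref{eq:derive-phi}, which turns the positivity of $\fp_{\mu_0\to\mu_1}$ into the statement that the heat flow of the stopped marginals accumulates non-negative mass at $(-\infty,x]$ at every moment $t$. The only care needed is the uniform-integrability step above; should one wish to avoid it, an alternative is to note that $\fp_\mu(x)$ is continuous with respect to convergence of $\mu$ in the Wasserstein $W_1$ distance, and that $\tilde X_t\to X_T$ in $L^1$ by the same $L^2$-boundedness argument, so $W_1(\tilde\nu_t,\mu_1)\to 0$ and the conclusion follows.
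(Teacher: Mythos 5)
Your proof is correct and takes essentially the same route as the paper's: monotonicity of $t\mapsto \fp_{\tilde\nu_t}(x)$ from Lemma~\ref{eq:derive-phi}, the probabilistic representation $\fp_\mu(x)=\E[(x-Z)^+]$, and $L^2$--boundedness of the stopped martingale $\tilde X_t$ (the paper computes $\Var(\tilde X_t)=\Var(\mu_0)+\E[t\wedge T]\le\Var(\mu_0)+\E T$ and invokes $L^2$ martingale convergence, which is the same content as your Doob maximal-inequality / uniform-integrability step).
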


%
\begin{figure}[b]

\includegraphics{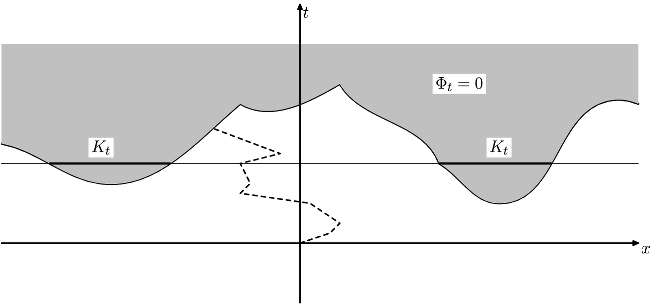}

\caption{Construction of $\conn_t$.}\label{fig:1}
\end{figure}

\begin{pf}
It is obvious from Lemma \ref{eq:derive-phi} that the functions $\fp_t
(x):=\break   \fp_{\tilde{\nu}_t \rightarrow\mu_1} (x) =\fp_{\mu_1} (x)
- \fp
_{\tilde{\nu}_t}(x)$ are monotonically decreasing with $t$ for any
fixed~$x$. The only thing we have to check is that $\fp_t (x)$
converges pointwise to $0$ (what is evident in the case of a bounded
Brownian transport, but needs to be justified in general). Indeed,
$\tilde{X}_t$ is a martingale and its variation
\[
\Var(\tilde{X}_t) = \Var(\tilde{X}_0) + \E(t\wedge T)
\le\Var(\mu_0) + \E T<\infty
\]
is uniformly bounded. Hence (see, e.g., \cite{BEK}, Theorem 4.3.3), we have that $\tilde{X}_t$ converges in $L^2$ to
$\tilde{X}_\infty(\om):= \lim_{t\to\infty} X_t (\om)$, and thus
\begin{eqnarray*}
\fp_{\tilde{\nu}_t} (x) &=& \int_{-\infty}^x \bbP(
\tilde{X}_t \le s) \,\mathrm{d}s = \int_\Omega\bigl|
\tilde{X}_t (\om) -s\bigr|_- \,\mathrm{d}\bbP(\om)
\\
&  \mathop{\longrightarrow}\limits_{t\to\infty}& \int_\Omega\bigl|\tilde{X}_\infty(
\om) -s\bigr|_- \,\mathrm{d}\bbP(\om) = \fp_{\mu_1}(x),
\end{eqnarray*}
where we have denoted $|x|_-:= |x|\cdot\1_{x\le0}$.
\end{pf}

These statements, in fact, suggest us a way of constructing the
stopping time $T$. Namely, together with the process $(X_t,t\ge0)$, we
consider an increasing family of closed sets $\conn_t = \{\fp_t =0\}$
(that will be in fact sections of the supergraph of $f$: $\conn_t = \{
x\in\R\dvtx t\ge f(x)\}$, as shown in Figure \ref{fig:1} below).\vadjust{\goodbreak} We stop
the process once it reaches this family:
\[
T = \inf\{t\ge0\dvtx X_t \in\conn_t\}.
\]
The function $f$ will then be defined as
\[
f(x) = \inf\{t\ge0\dvtx x\in\conn_t\} = \inf\bigl\{t\ge0\dvtx
\fp
_t(x) =0\bigr\}.
\]
Roughly speaking, we let the function $\fp_t = \fp_{\tilde{\nu}_t
\rightarrow\mu_1}$ decrease (as $\dot{\fp}_t \le0$), and once it
vanishes somewhere, we add this place to the set $\conn_t$ of ``stopped
motion.'' Due to this description, we will call in the future $\fp
_{\mu
_0 \to\mu_1}$ the \emph{potential function} of the finite expectation
Brownian transport from $\mu_0$ to $\mu_1$.

We wish to emphasize that the above description is absolutely
unrigorous. It cannot be used without proving the corresponding
existence theorems that do not seem to have an obvious direct proof.
So, we will prove them in Section \ref{s:BT-exists}, via the
discretization procedure. However, it gives an explanation why
Theorems~\ref{thm:BT-line} and~\ref{thm:BT-interval} should hold.

Moreover, this description can be (for the case of an absolutely
continuous measure $\mu_0$) rephrased in terms of Stefan-type problem.
Namely, the density $\rho_t = \rho_{\nu_t}$ obeys the heat equation
$\dot{\rho}_t = \frac{1}{2} \Delta\rho_t$ with the (moving) Dirichlet
boundary condition $\rho_t|_{\conn_t} =0$. So, the couple $(\fp_t,
\rho
_t)$ and the function $f(x)$ obey the system
%
%
\begin{equation}
\label{eq:stefan} %
\cases{ \dot{\fp}_t = -\frac{1}{2}
\rho_t,\vspace*{2pt}
\cr
\dot{\rho}_t =
\frac{1}{2} \Delta\rho_t,&\quad $\mbox{if } t<f(x),$\vspace*{2pt}
\cr
\fp_{f(x)}(x) =0,\vspace*{2pt}
\cr
\rho_{f(x)} (x)=0, } %
\end{equation}
where the third equation defines the function $f$, while the last one
is considered as a boundary condition on $\rho$.

We will not go deeper into giving fully formal sense to the
system \eqref{eq:stefan} (e.g., note that on the graph of $f$,
the derivative $\dot{\fp}_t$ can be discontinuous and if $f$ is
constant on some interval, then at the corresponding points, the
density $\rho$ will abruptly go to 0). As we have already mentioned in
Section \ref{s:skoro}, we could not prove the existence theorem here by
PDE methods, though it would be interesting to find such a direct
proof. However, we would like to emphasize here that the system \eqref
{eq:stefan} seems analogous to the Stefan problem of melting ice
(see \cite{rubi,V}).

Even though we have not yet established the existence of the process
described by the above rules, for the rest of this paragraph, we will---in
order to understand the ideas before passing to the technical
part---assume that it exists, and then study its behavior. Note that
one of the questions appearing (and that will be answered below) is the
following one: does $\fp_t$ vanish everywhere in finite time? To answer
this question, it is natural to consider the connected components of
$\R
\setminus\conn_t$ and to study their evolution. In fact, to prove
Theorem \ref{t:sol-pb}, we have to show that any of them disappears in
a finite time. This will be done in Lemma \ref{l:decomp-I}. The next
result deals with ``disconnecting'' different intervals from each
other, allowing us to study their evolution separately.

%
\begin{lemma}\label{l:coincidence}
Let $\mu,\tilde{\nu}$ be two centered absolutely continuous probability
measures on $\R$, with finite second moment, such that $\fp_{\tilde
{\nu
}\rightarrow\mu}$ is nonnegative. Let $x\in\R$ be such that $\fp
_{\tilde{\nu}\rightarrow\mu} (x)=0$. Then the measures $\mu$ and
$\tilde{\nu}$ of the interval $(-\infty,x]$ coincide, as well as the
expectations of the conditional measures $\frac{\tilde{\nu
}|_{(-\infty,x]}}{\tilde{\nu}((-\infty,x])}$ and $\frac{\mu|_{(-\infty,x]}}{\mu
((-\infty,x])}$.

The same holds for the restrictions on the interval $[x,+\infty)$ and
on any interval $[x,y]$ provided that $\fp_{\tilde{\nu} \rightarrow
\mu
}$ vanishes at both of its endpoints.
\end{lemma}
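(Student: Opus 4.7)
The plan is to exploit the fact that $\fp_{\tilde{\nu}\to\mu}$ is $C^1$ (thanks to the absolute continuity of $\mu$ and $\tilde{\nu}$), with continuous derivative
$$H(s) := (\mu-\tilde{\nu})\bigl((-\infty, s]\bigr),$$
as one reads directly from~\eqref{eq:phi_mu}. Since by hypothesis $\fp_{\tilde{\nu}\to\mu}\geq 0$ on $\R$ and $\fp_{\tilde{\nu}\to\mu}(x)=0$, the point $x$ is a global minimum, so its derivative vanishes there: $H(x)=0$. This yields at once the coincidence of cumulative masses $\mu((-\infty, x]) = \tilde{\nu}((-\infty, x])$, which is the first assertion.

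To deduce the equality of conditional expectations on $(-\infty, x]$, I would use the alternative form of $\fp_\mu(x)$ displayed in the first line of~\eqref{eq:def-phi_mu}, namely
$$\fp_\mu(x) \;=\; x\,\mu\bigl((-\infty, x]\bigr) \;-\; \int_{-\infty}^{x} y\, \mathrm{d}\mu(y).$$
Subtracting the analogous identity for $\tilde{\nu}$, evaluating at $x$, and using $\fp_{\tilde{\nu}\to\mu}(x)=0$ together with the just-established coincidence of the cumulative masses, one obtains
$$\int_{-\infty}^{x} y\, \mathrm{d}\mu(y) \;=\; \int_{-\infty}^{x} y\, \mathrm{d}\tilde{\nu}(y).$$
Dividing by the common mass $\mu((-\infty, x]) = \tilde{\nu}((-\infty, x])$ then gives the desired equality of conditional means.

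For the symmetric statement on $[x,+\infty)$, one can either rerun the same argument using the tail form of $\fp$ in the last line of~\eqref{eq:def-phi_mu}, or simply subtract: both $\mu$ and $\tilde{\nu}$ are centered probability measures, so their total masses and total first moments agree, and combining this with the coincidence on $(-\infty, x]$ immediately yields coincidence on the complementary half-line. For an interval $[x, y]$ where $\fp_{\tilde{\nu}\to\mu}$ vanishes at both endpoints, the same subtraction strategy works: applying the first half of the lemma at $y$ and at $x$ and taking the difference produces the coincidence of masses and of first moments on $[x, y]$, whence of conditional means. Absolute continuity of the two measures rules out atoms at the endpoints, so there is no ambiguity in decomposing the line into the three pieces $(-\infty, x]$, $[x, y]$, $[y, +\infty)$.

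The only point really deserving attention is the regularity step: that the hypothesis $\fp_{\tilde{\nu}\to\mu}\geq 0$ combined with a zero at $x$ implies $H(x)=0$. This rests on $H$ being continuous, which is exactly the absolute continuity assumption on $\mu$ and $\tilde{\nu}$. I do not expect any genuine obstacle beyond this; the remainder of the proof is purely algebraic manipulation of the two equivalent expressions for $\fp_\mu$ supplied by~\eqref{eq:def-phi_mu}.
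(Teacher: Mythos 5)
Your proof is correct and follows essentially the same route as the paper's: observe that the $C^1$ function $\fp_{\tilde\nu\to\mu}$ has a global minimum at $x$, so its derivative (the difference of the cumulative distribution functions) vanishes there, then combine with the explicit formula from~\eqref{eq:def-phi_mu} to equate the partial first moments and hence the conditional means. The paper simply states that the remaining cases ($[x,+\infty)$ and $[x,y]$) are direct corollaries, which you spell out via the centering/subtraction argument; this is the natural way to fill in that step.
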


\begin{pf}
As the measures $\tilde{\nu}$ and $\mu$ are nonatomic, the function
$\fp
_{\tilde{\nu} \rightarrow\mu}$ is of class $C^1$. But, as $\fp
_{\tilde
{\nu} \rightarrow\mu}$ is nonnegative and $\fp_{\tilde{\nu}
\rightarrow\mu}(x)=0$, the point $x$ is a minimum of the function
$\fp
_{\tilde{\nu} \rightarrow\mu}$. Hence, $\partial_x \fp_{\tilde
{\nu}
\rightarrow\mu} (x) =0$. Noting that $\partial_x \fp_{\tilde{\nu}
\rightarrow\mu} (x) = -\mu((-\infty,x]) + \tilde{\nu} ((-\infty,x])$,
we obtain the first conclusion of the lemma. Now, remember
identity \eqref{eq:def-phi_mu}:
\[
\fp_\mu(x) = \int_{-\infty}^x (x-y)
\,\mathrm{d}\mu(y) = x \mu\bigl((-\infty,x]\bigr) - \int_{-\infty}^x
y \,\mathrm{d}\mu(y).
\]
As $\fp_{\tilde{\nu} \rightarrow\mu} (x)=0$, and thus $\fp_\mu
(x) =\fp
_{\tilde{\nu}}(x)$, we have
%
%
\begin{equation}
\label{eq:equality} x \mu\bigl((-\infty,x]\bigr) - \int_{-\infty}^x y
\,\mathrm{d}\mu(y) = x \tilde{\nu}\bigl((-\infty,x]\bigr) - \int_{-\infty}^x
y \,\mathrm{d}\tilde{\nu}(y).
\end{equation}
The equality between the first terms in the left- and right-hand sides
of \eqref{eq:equality} is already established, and thus implies the
equality between the last terms.

The other issues of the lemma are direct corollaries of the proved ones.
\end{pf}

\label{ss:ideas-line}
We are now ready to deduce Theorem \ref{t:sol-pb} from Theorem \ref
{thm:BT-line}. In other words, still assuming that the description
in Section \ref{ss:stefan} defines us the desired process, we conclude
the construction of the counterexample to the Cantelli conjecture. This
deduction will be split in several lemmas.

A first tool that we need is the following general lemma that allows to
estimate from above the time in which a connected component of $\R
\setminus\conn_t$ ``disappears.''
%

\begin{lemma}\label{l:decomp-I}
Let $(\tilde{X}_t,\conn_t)$ be constructed as described above
(Section \ref{ss:stefan}) for some probability measures $\mu_0,\mu
_1$ with the
same mean and finite second moment (but perhaps with no time $\bar{t}$
such that $\conn_{\bar{t}}=\R$). Let $I$ be an interval which is a
connected component of $\R\setminus\conn_t$ (at some time $t$).
Assume that for any interval $J\subset I$, we have $\mu_1(J) \ge\exp
\{
-\alpha/|J|\}$. Then there exists a constant $\theta$ (which does not
depend on $I$) such that $I\subset\conn_{t+\theta\alpha|I|}$.
\end{lemma}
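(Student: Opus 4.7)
I will work with the Stefan-type description from \S\ref{ss:stefan}: the function $\fp_s := \fp_{\tilde{\nu}_s \to \mu_1}$ is non-negative, pointwise non-increasing in $s$ (with $\dot\fp_s = -\tfrac12 \rho_{\nu_s}$), and the growing closed set is $\conn_s = \{\fp_s = 0\}$. The goal is to show that after time $\theta \alpha |I|$ the function $\fp_{s}$ vanishes on the whole of $I$. Since $\conn_s$ grows, it suffices to control the lifetime of each connected component of $I \setminus \conn_s$.

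\textbf{Step 1: pointwise bound on a component.} Let $J = (a,b) \subset I \setminus \conn_s$ be a connected component at some time $s \ge t$. By Lemma~\ref{l:coincidence}, $\fp_s(a)=\fp_s(b)=0$ and $\tilde{\nu}_s(J)=\mu_1(J)$. Writing $\fp_s(x) = \int_a^x (F_{\mu_1}-F_{\tilde{\nu}_s})(u)\,du$ for $x\in J$ and observing that $|F_{\mu_1}(u)-F_{\tilde{\nu}_s}(u)| \le \mu_1(J)$ on $J$, one gets the two-sided estimate
\[
\fp_s(x) \;\le\; \mu_1(J)\,\min(x-a,\,b-x) \;\le\; \tfrac{1}{2}\,\mu_1(J)\,|J|.
\]
So the height of $\fp_s$ on $J$ is controlled both by the length and by $\mu_1(J)$.

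\textbf{Step 2: lifetime of a single component.} The main claim would be: each component $J$ either splits or disappears within additional time at most $c\alpha |J|$, for a universal constant $c$. To see this, while $J$ persists as a full component, $\rho_r$ obeys the heat equation on $J$ with Dirichlet boundary data on $\partial J$, so $\rho_r$ is given by the Dirichlet heat kernel of $J$; in particular, on the time scale $r - s \asymp |J|^2$ the mass $\nu_r(J)$ loses a definite fraction. Combining this with the height bound of Step~1 and the hypothesis $\mu_1(J) \ge e^{-\alpha/|J|}$ (which provides the lower bound on the mass $\nu_s(J)=\mu_1(J)$ that must be absorbed), one estimates $\int_s^{s+c\alpha|J|} \rho_r(x_0)\,dr$ at an interior point $x_0 \in J$ from below, until it eventually exceeds $\fp_s(x_0)$ and forces $\fp$ to vanish inside $J$.

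\textbf{Step 3: iteration over the tree of components.} Each split of $J$ yields sub-components of total length at most $|J|$; because the lifetime bound in Step~2 is \emph{linear} in the length, summation along any descending chain of components containing a given $x \in I$ telescopes into a geometric-type sum bounded by $\theta\alpha|I|$. Since this bound is uniform in $x$, we conclude that $I \subset \conn_{t+\theta\alpha|I|}$.

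\textbf{Main obstacle.} The delicate point is Step~2: the heat equation on $J$ equilibrates on a time scale of order $|J|^2$, whereas the target bound $c\alpha|J|$ is \emph{linear} in $|J|$. The two are reconciled only when the hypothesis $\mu_1(J) \ge e^{-\alpha/|J|}$ is used sharply, through the Stefan-type balance between the flux of $\rho_{\nu_s}$ across $\partial J$ (governed by the heat equation) and the mass $\rho_{\mu_1}$ that the front must deposit as $\conn_s$ grows. Executing this balance rigorously — rather than heuristically — will be the technical core of the argument, and is likely where the discretization/a priori estimates from \S\ref{s:B-transport-interval} will be invoked.
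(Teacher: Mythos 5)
Your outline identifies the right objects ($\fp_s$, the components of $I\setminus\conn_s$, Lemma~\ref{l:coincidence}, the heat equation with Dirichlet data) and the right iterative structure, but it does not close the two places where the argument is hard, and those are exactly where the paper's proof does its work.

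First, Step~2 and Step~3 do not fit together as stated. Step~2 promises only that a component $J$ either splits or disappears within time $c\alpha|J|$; but a split can leave a sub-component of length arbitrarily close to $|J|$, so the lengths along a descending chain need not decay geometrically and the sum $\sum c\alpha|J_i|$ need not be $O(\alpha|I|)$. What the paper actually proves, and what your Step~3 really needs, is the stronger assertion that within time $\theta_0\alpha|I|$ \emph{every} remaining component of $I\setminus\conn_{t'}$ has length $<|I|/2$; then the halving gives the geometric series $\theta_0\alpha|I|(1+\tfrac12+\tfrac14+\cdots)=2\theta_0\alpha|I|$.

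Second, Step~2 itself is left as the ``main obstacle'' and is not executed, and the route you sketch is not the one that works. You try to balance the heat flux on $J$ against $\fp_s$ at an interior point and to use the $e^{-\alpha/|J|}$ lower bound as a mass input. The paper instead runs the heat equation on the \emph{whole} of $I$ with Dirichlet conditions on $\partial I$ (which is a weaker stopping rule, hence an upper bound on the surviving mass), uses self-adjointness to convert the pairing $\langle\varphi_\tau,1/|I|\rangle$ into $\langle\psi_\tau,\varphi_0\rangle$ with $\psi$ the heat flow of the constant, and bounds it by $|I|\sup_I\psi_\tau$; after rescaling to $[0,1]$, the Fourier expansion gives the explicit bound $e^{-\pi^2\tau/(2|I|^2)}/(1-e^{-\pi^2\tau/|I|^2})$. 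Choosing $\tau=\tfrac{8}{\pi^2}\alpha|I|$ (linear in $|I|$!) makes the leading factor equal $\bigl(e^{-\alpha/(|I|/2)}\bigr)^2$, and since $e^{-\alpha/(|I|/2)}\le\tfrac12$ (forced by the hypothesis applied to either half of $I$), the remaining mass at time $t+\tau$ is $<e^{-\alpha/(|I|/2)}$. Combined with Lemma~\ref{l:coincidence} (which says a surviving component $J$ carries mass exactly $\mu_1(J)\ge e^{-\alpha/|J|}$), this forces $|J|<|I|/2$. That reconciliation between the quadratic heat time scale $|I|^2$ and the linear target $\alpha|I|$ — via the precise matching of exponents against the hypothesis $\mu_1(J)\ge e^{-\alpha/|J|}$ — is the missing calculation in your Step~2; the Step~1 pointwise bound on $\fp_s$, while correct, plays no role and cannot substitute for it.
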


\begin{pf}
We will first prove the following auxiliary statement: there exists a
constant $\theta_0$ such that, at the moment $t':=t+\theta_0\alpha
|I|$, any connected component of $I\setminus\conn_{t'}$ is of length
less than $|I|/2$. This statement will imply the conclusion of the
lemma. Indeed, applying it again to the connected components of
$I\setminus\conn_{t+\theta_0 \alpha|I|}$, we see that, at the moment
$t''= t+\theta_0 \alpha|I|+\frac{\theta_0}{2}\alpha|I|$, the lengths
of connected components of $I\setminus\conn_{t''}$ do not
exceed $\frac
{|I|}{4}$. We repeat this procedure. Thus, at the moment $t+2\theta_0
\alpha|I|$, we have $I \subset\conn_{t+2\theta_0\alpha|I|}$. This
completes the proof.

Let us now prove the latter statement. Indeed, note that for any
interval of complement $J\subset\R\setminus\conn_t$, the Wiener
measure of the trajectories that are still moving inside $J$ at the
time $t$ is equal to $\mu_1(J)$. Indeed, as $J$ is a connected
component of $\R\setminus\conn_t$, we have $\fp_t |_{\partial J} = 0$,
and hence Lemma \ref{l:coincidence} can be applied. So, to prove that
at some moment $t'>t$, the length of any connected component $J\subset
I\setminus\conn_{t'}$ is less than $|I|/2$, it suffices to show that,
at this moment, the proportion of trajectories that have not yet
intersected the graph of $f$ is at most $\exp(-\frac{\alpha}{|I|/2})$.

To do this, we consider a weaker stopping condition: the trajectory is
stopped once it reaches the boundary of $I$. The density of such a
process is given by the heat equation with the Dirichlet boundary
conditions on $I$. The measure of not yet stopped trajectories at the
moment $t+\tau$ is then given by the scalar product
$
\langle\varphi_\tau, 1/|I| \rangle$,
where
\[
\dot{\varphi}_\tau= \tfrac{1}{2} \Delta\varphi_{\tau},\qquad
\varphi_{\tau}|_{\partial I}=0,\qquad \varphi_0=
\rho_t.
\]
As the Laplace operator is self-adjoint, this scalar product is equal
to $\langle\psi_{\tau}, \varphi_0 \rangle$,
where
\[
\dot{\psi}_\tau= \frac{1}{2}\Delta\psi_{\tau},\qquad
\psi_{\tau
}|_{\partial I}=0,\qquad \psi_0=\frac{1}{|I|}.
\]

Thus, this scalar product does not exceed $|I|\cdot\sup_{I} \psi
_\tau
$. Rescaling the interval $I$ to $[0,1]$ and accordingly multiplying
the time by $1/|I|^2$ and the initial function by $|I|$, we obtain an
upper bound by
%
%
\begin{equation}
\label{eq:upp-bound} \sup_{[0,1]} \sum_{ n}
c_{2n+1} \exp\biggl\{-\frac{ \pi^2
(2n+1)^2}{2|I|^2} \tau\biggr\} \sin\bigl(
\pi(2n+1) x\bigr),
\end{equation}
where $c_{2n+1}=\frac{2}{2n+1}$ are the nonzero Fourier coefficients of
the function $1$ with respect to the eigenfunctions $\sin(\pi(2n+1)
x)$ of the Laplace operator on $[0,1]$. Estimating $c_n$ by 1 in \eqref
{eq:upp-bound} and the exponents by a geometric series, we see that
this supremum does not exceed
\[
\exp\biggl\{- \frac{\pi^2}{2|I|^2} \tau\biggr\} \cdot\frac
{1}{1-\exp\{-{\pi
^2}/{(|I|^2)} \tau\} }.
\]
Now, note that for $\tau= \frac{8}{\pi^2} \alpha|I|$, the first
factor is $\exp\{ - 4 \frac{\alpha}{|I|}\} = (\exp\{ - \frac
{\alpha}{|I|/2}\} )^2$. Thus, the product is at most
%
%
\begin{equation}
\label{eq:second} \exp\biggl\{- \frac{\alpha}{|I|/2}\biggr\}
\cdot\frac{\exp\{- {\alpha
}/{(|I|/2)}\}}{1-\exp\{- {\alpha}/{(|I|/2)}\} }.
\end{equation}
Note finally that $\exp\{- \frac{\alpha}{|I|/2}\}$ is at most $1/2$, as
otherwise the $\mu_1$-measures of both left and right halves of $I$
would be greater than $1/2$. Hence, the second factor in \eqref
{eq:second} is not greater than 1 and we have obtained the desired
estimate by $\exp\{- \frac{\alpha}{|I|/2}\}$.
\end{pf}

The next results are for the particular case of the transport in
Theorem \ref{t:sol-pb}, based essentially on the specifics of Gaussian
distributions. Namely, let $\mu_0$ and $\mu_1$ be as in Theorem \ref
{t:sol-pb}.
%

\begin{lemma}\label{l:hyp}
$\mu_0$ and $\mu_1$ satisfy the assumptions of Theorem \ref{thm:BT-line}.
\end{lemma}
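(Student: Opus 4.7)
The plan is to verify each of the hypotheses (i)--(v) of Theorem~\ref{thm:BT-line} directly from the explicit formulas for $\mu_0$ (given by~\eqref{eq:rho}) and $\mu_1 = c^{-1}\mN(0,1)|_{\R\setminus\compact}$. Choosing $\compact$ symmetric around the origin (which is the case for the standard Cantor-type construction described after Theorem~\ref{t:sol-pb}) makes both measures symmetric, hence centered; they are square-integrable since their densities are dominated by Gaussian ones; and, since $\compact$ has empty interior, both measures have topological support equal to $\R$. The key identity driving the rest is that on $\compact$ and on $\R\setminus\compact$ alike the signed measure $\mu_1-\mu_0$ has density $c^{-1}(\rho_{\mN(0,1)}-\rho_{\mN(0,t_0)})$, so that
\begin{equation*}
\fp_{\mu_0\to\mu_1}(x) = c^{-1}\bigl(\fp_{\mN(0,1)}(x)-\fp_{\mN(0,t_0)}(x)\bigr).
\end{equation*}

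The routine hypotheses are (i), (ii), (v). Absolute continuity is immediate. For (ii), $\rho_{\mu_1}\le c^{-1}(2\pi)^{-1/2}$ globally, while on $[-R,R]$ the density $\rho_{\mu_0}$ is bounded below by $c^{-1}(2\pi t_0)^{-1/2}e^{-R^2/(2t_0)}$ off $\compact$ and by $c^{-1}\min_{\compact}(\rho_{\mN(0,t_0)}-\rho_{\mN(0,1)})>0$ on $\compact$ (the minimum being attained and strictly positive by continuity and by the choice of $\compact$). For (v), since $\compact\subset[-1,1]$, for $|x|$ large
\begin{equation*}
\frac{\rho_{\mu_0}(x)}{\rho_{\mu_1}(x)}=\frac{\rho_{\mN(0,t_0)}(x)}{\rho_{\mN(0,1)}(x)}=t_0^{-1/2}\exp\!\bigl(-\tfrac{1}{2}(t_0^{-1}-1)x^2\bigr)\to 0.
\end{equation*}

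Hypothesis (iii) requires a short combinatorial argument. For $J\subset[-R,R]$, the inclusion $\compact\subset[-1,1]$ yields $Leb(J\setminus\compact)\ge |J\setminus[-1,1]| + Leb((J\cap[-1,1])\setminus\compact)$. A two-case analysis according to which of $|J\cap[-1,1]|$ and $|J\setminus[-1,1]|$ carries at least half of $|J|$ (applying the Cantor hypothesis of Theorem~\ref{t:sol-pb} in the first case, and the linear estimate $Leb(J\setminus\compact)\ge|J|/2$ in the second) gives $Leb(J\setminus\compact)\ge e^{-2\alpha/|J|}$. Multiplying by the Gaussian lower bound $\rho_{\mN(0,1)}\ge(2\pi)^{-1/2}e^{-R^2/2}$ on $[-R,R]$ and absorbing the resulting constant into the exponent (possible because $|J|\le 2R$) then gives (iii) with some $\alpha_R>0$.

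The only genuinely conceptual step is (iv). Via the identity above, strict positivity of $\fp_{\mu_0\to\mu_1}$ reduces to $\fp_{\mN(0,1)}(x)>\fp_{\mN(0,t_0)}(x)$ for every $x\in\R$. Writing $\mN(0,1)=X+Z$ as an independent sum with $X\sim\mN(0,t_0)$ and $Z\sim\mN(0,1-t_0)$, and using the representation $\fp_\mu(x)=\E[(x-\cdot)_+]$ from~\eqref{eq:def-phi_mu}, the difference equals $\E[(x-X-Z)_+]-\E[(x-X)_+]$. Conditioning on $X$ and applying Jensen's inequality to the convex function $z\mapsto (x-X-z)_+$, whose value at $z=0=\E Z$ is $(x-X)_+$, gives $\ge 0$; strictness follows because this function has a non-smooth kink at $z=x-X$ and $Z$ has positive density on all of $\R$, so Jensen's inequality is strict for every realization of $X$, and integrating over $X$ preserves strictness. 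This strict convex-order step is the one mildly delicate part of the proof; everything else is routine bookkeeping with the Cantor structure and Gaussian tails.
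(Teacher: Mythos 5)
Your proof is correct, and hypotheses (i), (ii), (iii), (v) are handled in essentially the way the paper leaves implicit (the paper dismisses them in one line as consequences of the assumptions on $\compact$; you fill in the routine checks, including the half-and-half case split for (iii), which works up to choosing $\alpha_R$ large enough depending on $R$). The reduction of (iv) to $\fp_{\mu_0\to\mu_1}=c^{-1}\fp_{\mN(0,t_0)\to\mN(0,1)}$ is also the same as in the paper, which phrases it via the linearity $\fp_{\alpha\mu+\beta\nu}=\alpha\fp_\mu+\beta\fp_\nu$ and the decompositions $\mN(0,t_0)=c\mu_0+(1-c)\mu'$, $\mN(0,1)=c\mu_1+(1-c)\mu'$. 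Where you genuinely depart is the final positivity step. The paper invokes the heat-semigroup identity of Lemma~\ref{eq:derive-phi}, namely $\partial_t\fp_{\mN(0,t)}=\tfrac12\rho_{\mN(0,t)}$, to write $\fp_{\mN(0,t_0)\to\mN(0,1)}(x)$ as the manifestly positive integral $c^{-1}\int_{t_0}^1\rho_{\mN(0,t)}(x)\,\mathrm{d}t$ (up to a factor $\tfrac12$ which the paper drops but which does not affect positivity). You instead use the representation $\fp_\mu(x)=\E[(x-\cdot)_+]$ from~\eqref{eq:def-phi_mu}, write $\mN(0,1)=\mN(0,t_0)*\mN(0,1-t_0)$, and apply a strict Jensen argument for the convex function $z\mapsto(x-X-z)_+$, using that $Z$ has a density supported on all of $\R$ so the kink at $z=x-X$ forces strictness. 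Both arguments are valid and of comparable length; the paper's is more in keeping with the role the function $\fp_t$ plays throughout the paper (it reuses the same lemma that drives the Stefan-type description), while yours is a self-contained convex-order (dilation) inequality that does not depend on any of the heat-equation machinery and would generalize immediately to any centered convolution perturbation.
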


\begin{pf}
The conditions (i) and (v) are obvious and the fact that the measures
$\mu_0, \mu_1$ have the same mean comes from the fact that we are
removing the same part from $\mN(0,t_0)$ and $\mN(0,1)$. Conditions
(ii) and (iii) are due to the assumptions on $\compact$. We only have
to prove (iv). Indeed, the function $\fp_\mu$ depends linearly on
$\mu
$: $\fp_{\alpha\mu+\beta\nu} = \alpha\fp_\mu+ \beta\fp_\nu$. Due
to the definition of $\mu_0$ and $\mu_1$, we have that
\begin{eqnarray*}
\operatorname{Law}\bigl(\mN(0,t_0)\bigr) &=& c\mu_0 +(1-c)
\mu',
\\
\operatorname{Law}\bigl(\mN(0,1)\bigr) &=& c\mu_1 +(1-c) \mu',
\end{eqnarray*}
where $\mu'$ is the conditional distribution law of $\mN(0,1)$ on
$\compact$.
Hence,
\begin{eqnarray*}
\fp_{\mu_0\rightarrow\mu_1}(x) = c^{-1}\fp_{\mathcal
{N}(0,t_0)\rightarrow\mathcal{N}(0,1)}(x) =
c^{-1} \int_{t_0}^1 \frac
{1}{\sqrt{2\pi t}}
\exp\biggl\{-\frac{x^2}{2t}\biggr\} \,\mathrm{d}t>0.
\end{eqnarray*}
\upqed\end{pf}

Now, let the finite expectation Brownian transport $(X_t,T_1)$, where
$T_1 = f_1(X_{T_1})$, be a continuous Brownian transport of
Theorem \ref
{thm:BT-line}. We have to show that the (continuous) function $f_1$ is
bounded. In other words, we have to estimate its behavior at infinity.
Actually, we will prove the stronger statement.
%

\begin{proposition}\label{p:maj}
$\lim_{x\to\infty} f_1(x) = 1-t_0$. Moreover, there exists a constant
$\beta>0$ such that for all $|x|$ large enough, one has $1-t_0 \le
f_1(x) \le1-t_0+e^{-\beta x^2}$.
\end{proposition}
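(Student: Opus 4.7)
For the lower bound $f_1(x)\ge 1-t_0$, I would integrate Lemma~\ref{eq:derive-phi} from $0$ to $f_1(x)$ and use $\fp_{f_1(x)}(x)=0$ to obtain the identity $\fp_0(x)=\frac{1}{2}\int_0^{f_1(x)}\rho_{\nu_s}(x)\,\mathrm{d}s$. Since $\mu_0\le c^{-1}\mN(0,t_0)$ by the construction in Section~\ref{s:construction}, and since the density of not-yet-stopped particles is dominated by the free heat evolution, $\rho_{\nu_s}(x)\le \rho_{\mu_0*\mN(0,s)}(x)\le c^{-1}\rho_{\mN(0,t_0+s)}(x)$. Combining this with the explicit formula $\fp_0(x)=c^{-1}\int_{t_0}^1\rho_{\mN(0,u)}(x)\,\mathrm{d}u$ derived in the proof of Lemma~\ref{l:hyp}, the substitution $u=t_0+s$ gives $2\int_{t_0}^1\rho_{\mN(0,u)}(x)\,\mathrm{d}u\le\int_{t_0}^{t_0+f_1(x)}\rho_{\mN(0,u)}(x)\,\mathrm{d}u$, which forces $t_0+f_1(x)>1$.

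For the upper bound, the key point is that the strict inequality $f_1>1-t_0$ just established means $\conn_s=\emptyset$ for all $s\le 1-t_0$, hence $\tilde{\nu}_{1-t_0}=\mu_0*\mN(0,1-t_0)$ (pure free evolution, no stopping has occurred yet). Using the identity $\mu_1-\mu_0*\mN(0,1-t_0)=c^{-1}(\mN(0,1)|_{\compact}*\mN(0,1-t_0)-\mN(0,1)|_{\compact})$, which is a nonnegative measure on $[x,\infty)$ when $x>\max\compact$, I would compute
\[
\fp_{1-t_0}(x)=c^{-1}\int_x^\infty\int_{\compact}\rho_{\mN(0,1)}(y)\,\bbP(\mN(0,1-t_0)\ge s-y)\,\mathrm{d}y\,\mathrm{d}s.
\]
Standard Gaussian tail estimates combined with the boundedness of $\compact$ then yield $\fp_{1-t_0}(x)\le C\exp(-\beta_0 x^2)$ for any $\beta_0<1/(2(1-t_0))$. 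The crucial gain is that, because $t_0>0$ forces $1/(2(1-t_0))>1/2$, we may choose $\beta_0>1/2$.

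Finally, setting $\epsilon:=f_1(x)-(1-t_0)$, the identity $\fp_{1-t_0}(x)=\frac{1}{2}\int_{1-t_0}^{1-t_0+\epsilon}\rho_{\nu_s}(x)\,\mathrm{d}s$ together with a lower bound of the form $\rho_{\nu_s}(x)\ge c''\,\rho_{\mN(0,1)}(x)$ on most of this interval would give $\epsilon\le C'\exp(-(\beta_0-\tfrac12)x^2)=C'\exp(-\beta x^2)$ with $\beta=\beta_0-\tfrac12>0$, which is the claimed estimate. This lower bound is heuristically clear since $\rho_{\nu_{1-t_0}}(x)\approx c^{-1}\rho_{\mN(0,1)}(x)$ and the Dirichlet boundary $\partial\conn_s$ only reaches $x$ at the very end $s=f_1(x)$; for short times the absorption travels inward only a distance of order $\sqrt{\epsilon}$. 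The main obstacle is making this density estimate rigorous, since $\rho_{\nu_s}(x)\to 0$ as $s\to f_1(x)^-$: one must show that the ``thin boundary layer'' near $s=f_1(x)$ contributes negligibly to the integral, which I expect can be done via a heat-kernel comparison with a killed Brownian motion in the moving region, closed by a bootstrap feeding the preliminary upper bound on $\epsilon$ back into the propagation-of-absorption estimate.
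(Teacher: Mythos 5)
Your lower bound $f_1\ge 1-t_0$ follows essentially the paper's argument: integrate Lemma~\ref{eq:derive-phi}, dominate $\rho_{\nu_s}$ by the free evolution $c^{-1}\rho_{\mN(0,t_0+s)}$, and use that removing mass at time $0$ makes the comparison strict. Your exact-convolution representation of $\fp_{1-t_0}(x)$ (valid for $x$ outside $\compact$) is a sound alternative to the paper's Lemmas~\ref{l:encadrement}--\ref{l:estimate}, and yields the same overall Gaussian decay rate.

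The last step is where there is a genuine gap, and you have identified it yourself without closing it. You need $\rho_{\nu_s}(x)\ge c''\rho_{\mN(0,1)}(x)$ on a positive fraction of $[1-t_0,f_1(x)]$. But whether this holds is governed by how close the absorbing boundary $\partial\conn_s$ has come to $x$; the bound fails once $\conn_s$ is within $O(\sqrt{f_1(x)-s})$ of $x$, and quantifying what fraction of the interval is thus affected is circular -- it is precisely controlled by the decay of $f_1(y)-(1-t_0)$ for $y$ near $x$, which is the quantity being estimated. The proposed bootstrap is not only unexecuted, it is not initialised: there is no a priori bound $f_1(x)-(1-t_0)\to 0$ to seed the iteration, and without one you cannot rule out that the component of $\R\setminus\conn_s$ containing $x$ has been very thin for most of $[1-t_0,f_1(x)]$, making $\rho_{\nu_s}(x)$ exponentially small almost everywhere on that interval. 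The paper avoids this feedback loop entirely. Lemma~\ref{l:points} works on the interval $[x,x+\ell(x)]$, $\ell(x)=e^{-\beta_0 x^2/2}$, under the contradiction hypothesis that $\conn_s$ has not yet entered that interval, so the Dirichlet-eigenfunction lower bound on the density is only invoked on a short window where absorption is, by assumption, absent; this produces a nearby absorbed point $y_\pm$ with $f_1(y_\pm)\le 1-t_0+\ell(x)$. Lemma~\ref{l:decomp-I} then bounds the extinction time of the remaining short component containing $x$ by a universal multiple of its length. That two-step decoupling -- find an absorbed neighbour by contradiction, then shrink the component by the universal disappearance estimate -- is the idea missing from your proposal.
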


A first step in proving this proposition is the following.
%

\begin{lemma}
$\forall x\in\R$, $f_1(x)\ge1-t_0$.
\end{lemma}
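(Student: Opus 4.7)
The plan is to use the description $f_1(x) = \inf\{t \ge 0 : \fp_t(x) = 0\}$ with $\fp_t(x) := \fp_{\tilde{\nu}_t \to \mu_1}(x)$ from \S\ref{ss:stefan}, and to show directly that $\fp_t(x) > 0$ for every $t < 1 - t_0$ and every $x \in \R$. By Lemma~\ref{eq:derive-phi} we have $\dot\fp_t = -\tfrac{1}{2}\rho_{\nu_t}$, so the question reduces to controlling the cumulative mass $\tfrac{1}{2}\int_0^t \rho_{\nu_s}(x)\,\mathrm{d}s$ against the explicit initial cost computed via Lemma~\ref{l:hyp}, namely
$$\fp_0(x) \;=\; \fp_{\mu_0 \to \mu_1}(x) \;=\; \frac{1}{2c}\int_{t_0}^{1} \rho_{\mN(0,s)}(x)\,\mathrm{d}s.$$

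The key input I would establish is the pointwise upper bound
\begin{equation}
c\,\rho_{\nu_t}(x) \;\le\; \rho_{\mN(0,\, t_0 + t)}(x) \qquad \forall x \in \R,\ t\ge 0, \label{eq:plan-dom}
\end{equation}
which I would obtain in two steps. First, the construction~\eqref{eq:rho} gives $c\rho_{\mu_0}(y) \le \rho_{\mN(0, t_0)}(y)$ pointwise (with equality off $\compact$, strict inequality on $\compact$); convolving with the Gaussian kernel $\rho_{\mN(0,t)}$ and using the semigroup property produces $c(\rho_{\mu_0} * \rho_{\mN(0,t)})(x) \le \rho_{\mN(0,\, t_0 + t)}(x)$. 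Second, since $\rho_{\nu_t}$ satisfies the heat equation with a Dirichlet condition on the growing absorbing set $\conn_t$, the parabolic maximum principle (equivalently, a direct coupling of each stopped trajectory with its free Brownian counterpart) gives the absorption estimate $\rho_{\nu_t}(x) \le (\rho_{\mu_0} * \rho_{\mN(0,t)})(x)$. Chaining the two inequalities yields \eqref{eq:plan-dom}.

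Given \eqref{eq:plan-dom}, integrating $\dot\fp_t = -\tfrac{1}{2}\rho_{\nu_t}$ in $t$ and shifting variables by $t_0$ gives
$$\fp_t(x) \;\ge\; \frac{1}{2c}\int_{t_0 + t}^{1} \rho_{\mN(0,s)}(x)\,\mathrm{d}s,$$
which is strictly positive for all $t < 1 - t_0$, yielding $f_1(x) \ge 1 - t_0$. The only non-routine ingredient is the absorption comparison $\rho_{\nu_t} \le \rho_{\mu_0} * \rho_{\mN(0,t)}$: intuitively obvious (absorbing trajectories cannot create mass), but in the continuous framework of \S\ref{ss:stefan} the set $\conn_t$ carries no a priori regularity. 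The cleanest way to justify it is to prove the analogue in the discrete scheme of Section~\ref{s:BT-exists}, where removing mass at each time step manifestly decreases the subsequent density, and then pass to the continuous limit together with the construction of the Brownian transport itself.
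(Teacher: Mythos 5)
Your proof follows essentially the same route as the paper: identify $\fp_{\mu_0\to\mu_1}$ with the Gaussian cost $\tfrac{1}{2c}\int_{t_0}^1\rho_{\mN(0,s)}(x)\,\mathrm{d}s$, dominate the stopped density by the free Gaussian diffusion $c\,\rho_{\nu_t}(x)\le\rho_{\mN(0,t_0+t)}(x)$, and integrate $\dot\fp_t=-\tfrac12\rho_{\nu_t}$ to see that $\fp_t(x)$ cannot vanish before time $1-t_0$. You are somewhat more careful than the paper in separating the two sources of the density domination (initial data comparison under heat flow, plus the absorption/maximum-principle estimate) and in flagging that the latter is cleanest to justify via the discrete scheme; the paper simply asserts the comparison. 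Both arguments are correct.
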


\begin{pf}
It is here easier to work with the nonnormalized measures $\hat{\mu}_0
= c\mu_0$ and $\hat{\mu}_1 = c\mu_1$, and with the corresponding
nonnormalized potential function
\[
\fp_{\hat{\mu}_0 \to\hat{\mu}_1} = c \fp_{\mu_0\to\mu_1} =
\fp_{\mN
(0,t_0)\to\mN(0,1)}.
\]
It is clear that they satisfy the system \eqref{eq:stefan}. In fact,
one can simply divide everything by $c$, to pass to the normalized
case, but it seems to us that the explanation would be less clear.

If we had not removed at the initial moment, from $\mN(0,t_0)$, the
particles corresponding to $(1-c)\mu' = \mN(0,1)|_\compact$, we would
have had
\[
\int_0^{1-t_0} \rho_{\mN(0,t+t_0)}(x) \,\mathrm{d}t
= \fp_{\hat{\mu
}_0\to\hat{\mu}_1} (x).
\]
As our initial condition is only a part of $\mN(0,t_0)$, we have
$\forall t>0\ \forall x\in\R$ $\rho_t (x) < \rho_{\mN(0,t_0)} (x)$,
where $\rho_t$ is the density of the process started with $\hat{\mu}_0$
and stopped at the moment of touching the graph of $f_1$. Hence, we
have
\[
\forall x \in\R\qquad \int_0^{t_0} \rho_t
(x) \,\mathrm{d}t < \fp_{\hat
{\mu}_0\to\hat{\mu}_1}(x),
\]
and as $\int_0^{f_1(x)} \rho_t \,\mathrm{d}t = \fp_{\hat{\mu}_0\to
\hat
{\mu}_1}$, we have proved the result.
\end{pf}

Now, let us consider the density that we obtain at the time $1-t_0$.
The next lemma estimates its behavior at infinity.

%
\begin{lemma}\label{l:encadrement}
There exists a constant $\beta_0>0$ such that for all $|x|$ large
enough, one has
\[
\rho_{\mN(0,1)}(x) \cdot\bigl(1-e^{-\beta_0 x^2}\bigr) \le
\rho_{1-t_0}(x) \le\rho_{\mN(0,1)}(x).
\]
\end{lemma}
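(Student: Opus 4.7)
The plan is to exploit the fact that we already know $f_1\ge 1-t_0$ everywhere (previous lemma), which means that no particle of the process started with $\hat\mu_0$ has been stopped by time $1-t_0$. Indeed, a particle is stopped at the first time $t$ with $t=f_1(X_t)$, and this requires $f_1(X_t)<1-t_0$ at some $t<1-t_0$, contradicting the lower bound on $f_1$. So, up to the time $1-t_0$, the evolution of the not-yet-stopped particles coincides exactly with the unstopped heat evolution starting from $\hat\mu_0$.

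Writing the initial (non-normalized) density as $\rho_{\mN(0,t_0)}-\rho_{\mN(0,1)}\1_\compact$ and recalling that starting from $\mN(0,t_0)$ and running Brownian motion for time $1-t_0$ produces $\mN(0,1)$, I would obtain the exact identity
\[
\rho_{1-t_0}(x)=\rho_{\mN(0,1)}(x)-r(x),\qquad r(x):=\int_\compact \rho_{\mN(0,1)}(y)\, p_{1-t_0}(y,x)\,\mathrm{d}y,
\]
where $p_{1-t_0}(y,x)=\frac{1}{\sqrt{2\pi(1-t_0)}}\exp\{-(x-y)^2/(2(1-t_0))\}$ is the Brownian transition density. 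The upper bound is then immediate from $r(x)\ge 0$.

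For the lower bound I would estimate $r(x)$ using $\compact\subset[-1,1]$: for $|x|$ large and $y\in\compact$, one has $(x-y)^2\ge (|x|-1)^2$, hence
\[
r(x)\;\le\; C\,\exp\!\left\{-\frac{(|x|-1)^2}{2(1-t_0)}\right\}
\]
for some constant $C$ depending only on $t_0$. Comparing this with $\rho_{\mN(0,1)}(x)=\frac{1}{\sqrt{2\pi}}e^{-x^2/2}$ and noting that $\frac{1}{1-t_0}>1$, the ratio $r(x)/\rho_{\mN(0,1)}(x)$ decays at infinity like $\exp\{-\frac{t_0}{2(1-t_0)}x^2+O(|x|)\}$. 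Thus for any constant $\beta_0$ with $0<\beta_0<\frac{t_0}{2(1-t_0)}$, the desired inequality $r(x)\le \rho_{\mN(0,1)}(x)\cdot e^{-\beta_0 x^2}$ holds for all $|x|$ sufficiently large, which yields the stated lower bound.

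There is essentially no obstacle here; the only conceptual point to secure is the claim that no trajectory has been stopped before time $1-t_0$, which follows directly from the previous lemma. Everything else is a short explicit Gaussian computation, and the gain $\frac{t_0}{2(1-t_0)}$ in the exponent comes precisely from the fact that the ``removed'' Gaussian $\mN(0,1)$ on $\compact$ is tighter (variance $1$) than the spreading that transports it to $x$ over the remaining time $1-t_0$.
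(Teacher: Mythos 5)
Your proof is correct and follows the same route as the paper's: both rely on the fact (implicit in the paper's line ``the measure $\nu_{1-t_0}$ is the convolution of the initial measure $\hat\mu_0$ with $\mN(0,1-t_0)$'', and made explicit in your first paragraph) that no trajectory is stopped before time $1-t_0$, write $\rho_{\mN(0,1)}-\rho_{1-t_0}$ as the heat evolution of the removed mass $\mN(0,1)|_\compact$ supported in $[-1,1]$, and bound that Gaussian convolution from above by $C\exp\{-(|x|-1)^2/(2(1-t_0))\}$. Your resulting threshold $\beta_0<\frac{t_0}{2(1-t_0)}$ is identical to the paper's $\beta_0<\frac12\bigl(\frac{1}{1-t_0}-1\bigr)$.
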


\begin{pf}
The measure $\nu_{1-t_0}$ is the convolution of the initial measure
$\hat{\mu}_0$ with $\mN(0,1-t_0)$. If, instead of $\hat{\mu}_0$,
we had
$\mN(0,t_0)$, we would obtain exactly $\mN(0,1)$. But as $\hat{\mu}_0$
is only a part of $\mN(0,1-t_0)$, we immediately have $\rho_{1-t_0}(x)
\le\rho_{\mN(0,1)}(x)$.

The difference $\rho_{\mN(0,1)}(x) - \rho_{1-t_0}(x)$ is the part of
the density that comes from the removed part $\mN(0,1)|_\compact$ of
the initial condition. This part is supported by $[-1,1]$. Hence, the
difference
\[
\rho_{\mN(0,1)}(x) - \rho_{1-t_0}(x) = \rho_{\mN(0,1)|_\compact
\ast
\mN(0,1-t_0)} (x)
\]
can be estimated from above as $u\cdot e^{-{(x-1)^2}/{(2(1-t_0))}}$,
where $u>0$ is a constant. This is asymptotically less that $e^{-\beta
_0 x^2} \cdot\rho_{\mN(0,1)}(x)$ for any $\beta_0 < \frac{1}{2}
(\frac{1}{1-t_0} -1 )$.
\end{pf}

From now on, let us fix $\beta_0$ as in Lemma \ref{l:encadrement}. We
can estimate the behavior of the function $\fp$ at the same moment $1-t_0$.
%

\begin{lemma}\label{l:estimate}
For all $|x|$ large enough, we have $\fp_{1-t_0}(x) \le e^{-\beta_0
x^2} \rho_{\mN(0,1)}$.
\end{lemma}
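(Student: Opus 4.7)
The plan is to express $\fp_{1-t_0}(x)$ as a tail integral of $\rho_{\mN(0,1)}-\rho_{1-t_0}$ and then invoke Lemma~\ref{l:encadrement}. Using the decomposition $\fp_t(x)=\fp_{\hat\mu_1}(x)-\fp_{\tilde\nu_t}(x)$ with $\tilde\nu_t=\nu_t+\sigma_t$, where $\nu_t$ has density $\rho_t$ and $\sigma_t$ (the already-stopped mass) is a nonnegative sub-measure of $\hat\mu_1$, I note that $\hat\mu_1$ and $\tilde\nu_{1-t_0}$ both have total mass $c$ and first moment zero (the latter by the martingale property of $\tilde X_t$, since $\hat\mu_0$ is centered). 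Applying the form $\fp_\mu(x)=x\mu(\R)-\int y\,d\mu(y)+\int_x^\infty(y-x)\,d\mu(y)$ from~\eqref{eq:def-phi_mu}, the affine parts cancel, so
$$\fp_{1-t_0}(x)=\int_x^\infty(y-x)\,d(\hat\mu_1-\sigma_{1-t_0})(y)-\int_x^\infty(y-x)\,\rho_{1-t_0}(y)\,dy.$$
Since $0\le\sigma_{1-t_0}\le\hat\mu_1$ and $\hat\mu_1$ coincides with $\mN(0,1)$ outside $\compact\subset[-1,1]$, the first integral is, for $x>1$, bounded by $\int_x^\infty(y-x)\rho_{\mN(0,1)}(y)\,dy$. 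Hence
$$\fp_{1-t_0}(x)\le\int_x^\infty(y-x)\bigl[\rho_{\mN(0,1)}(y)-\rho_{1-t_0}(y)\bigr]\,dy.$$

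Inserting the pointwise bound $\rho_{\mN(0,1)}(y)-\rho_{1-t_0}(y)\le u\,e^{-(y-1)^2/(2(1-t_0))}$ from the proof of Lemma~\ref{l:encadrement} and substituting $z=y-1$, one obtains a Gaussian first moment contributing $u(1-t_0)\,e^{-(x-1)^2/(2(1-t_0))}$, while a term $-u(x-1)\int_{x-1}^\infty e^{-z^2/(2(1-t_0))}\,dz$ is negative for $x>1$ and can be dropped from the upper bound. Thus for $x$ large and positive,
$$\fp_{1-t_0}(x)\le u(1-t_0)\,e^{-(x-1)^2/(2(1-t_0))}.$$

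The desired inequality now follows from the choice of $\beta_0$: Lemma~\ref{l:encadrement} imposes $\beta_0<\tfrac{1}{2}\bigl(\tfrac{1}{1-t_0}-1\bigr)=\tfrac{t_0}{2(1-t_0)}$, equivalently $\tfrac{1}{2(1-t_0)}>\tfrac12+\beta_0$. The quadratic coefficient of the exponent $-\tfrac{(x-1)^2}{2(1-t_0)}$ therefore strictly exceeds $\tfrac12+\beta_0$, so $e^{-(x-1)^2/(2(1-t_0))}=o\bigl((2\pi)^{-1/2}\exp(-(\tfrac12+\beta_0)x^2)\bigr)=o\bigl(e^{-\beta_0 x^2}\rho_{\mN(0,1)}(x)\bigr)$ as $x\to+\infty$, proving the claim for $x$ large positive. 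The case $x\to-\infty$ is completely symmetric: one reduces to a tail integral on $(-\infty,x]$ via $\fp_\mu(x)=\int_{-\infty}^x(x-y)\,d\mu(y)$, and the analogous bound $u(1-t_0)e^{-(x+1)^2/(2(1-t_0))}$ gives the same conclusion.

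The subtle step is the first one. It would be tempting to identify $\sigma_{1-t_0}$ with $\hat\mu_1|_{\conn_{1-t_0}}$ and then try to argue that $\conn_{1-t_0}$ is bounded, but boundedness of the conquered region at this time is essentially what Proposition~\ref{p:maj} is asserting, so that route would be circular. Using only the coarser inequality $\sigma_{1-t_0}\le\hat\mu_1$, which follows at once from the construction of the transport (stopped particles realize a sub-portion of the target), sidesteps this issue and delivers an estimate of the right order.
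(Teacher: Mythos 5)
Your proof is correct and follows essentially the same route as the paper's: reduce $\fp_{1-t_0}(x)$ to a one-sided tail-moment integral of the density deficit $\rho_{\mN(0,1)}-\rho_{1-t_0}$ (using~\eqref{eq:def-phi_mu}) and then invoke the Gaussian control coming out of Lemma~\ref{l:encadrement}. The only genuine refinement is your explicit bookkeeping of the already-stopped mass via $0\le\sigma_{1-t_0}\le\hat\mu_1$, where the paper instead implicitly relies on the preceding lemma ($f_1\ge 1-t_0$, hence $\sigma_{1-t_0}=0$) to identify $\tilde\nu_{1-t_0}$ with $\nu_{1-t_0}$; both arguments deliver the same bound.
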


\begin{pf}
From the definition of $\fp$, we indeed have
\begin{eqnarray*}
\fp_{1-t_0} (x) &=& \int_{-\infty}^x\bigl (
\mu_1\bigl((-\infty,s]\bigr) - \tilde{\nu}_{1-t_0}\bigl((-\infty,s]\bigl)
\bigr) \,\mathrm{d}s
\\
&=& \int_{-\infty}^x (x-s) (\rho_{\mu_1} -
\rho_{\tilde{\nu
}_{1-t_0}}) (s) \,\mathrm{d}s
\\
&=& \int_{-\infty}^x (x-s) (\rho_{\mN(0,1)} -
\rho_{\nu_{1-t_0}}) (s) \,\mathrm{d}s.
\end{eqnarray*}
Applying Lemma \ref{l:encadrement}, we have as $x\to-\infty$
\begin{eqnarray*}
\fp_{1-t_0}(x) &=& \int_{-\infty}^x (x-s)
\cdot e^{-\beta_0 s^2}\cdot\rho_{\mN(0,1)}(s) \,\mathrm{d}s
\\
&\le& \frac{1}{\sqrt{2\pi}} \int_{-\infty}^x |s|
e^{-(\beta_0 +1/2)s^2
} \,\mathrm{d}s = \frac{1}{\sqrt{2\pi}} \int^{\infty}_x
e^{-(\beta_0
+1/2)v^2 } \,\mathrm{d}\bigl(v^2/2\bigr)
\\
&\le& \frac{1}{\sqrt{2\pi}} e^{-(\beta_0 +1/2)x^2 } = e^{-\beta_0
x^2}\cdot
\rho_{\mN(0,1)}(x).
\end{eqnarray*}
In the same way, using the integral representation of $\fp_{\mu\to
\nu
}$ via the integral \eqref{eq:def-phi_mu}, one can estimate $\fp
_{1-t_0}(x)$ for any large positive $x$.
\end{pf}

Having obtained this estimate, we can conclude that the inequality
$f_1(x) \le1-t_0+e^{-\beta_0 x^2/2}$ will be satisfied for a ``very
dense'' at infinity set of points $x$. Namely, denote $\ell(x):=
e^{-\beta_0 x^2/2}$.

%
\begin{lemma}\label{l:points}
For any $|x|$ large enough, there exist two points $y_+\in[x,x+\ell
(x)]$ and $y_- \in[x-\ell(x),x]$ such that $f_1(y_+)\le1-t_0 +\ell
(x)$ and $f_1(y_-)\le1-t_0 +\ell(x)$.
\end{lemma}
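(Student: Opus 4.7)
My plan is to argue by contradiction. Assume that for some $|x|$ arbitrarily large, $f_1(y) > t^* := 1-t_0+\ell(x)$ for every $y$ in $I := [x,x+\ell(x)]$ (the $y_-$ case is symmetric). Equivalently, at time $t^*$ the interval $I$ lies entirely in the not-yet-absorbed region $\R\setminus \conn_{t^*}$, and by monotonicity of $\conn_t$ the same holds throughout the time range $[1-t_0, t^*]$, so no mass inside $I$ is absorbed on this interval.

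The first step is to derive an upper bound on the space-time integral $\int_{1-t_0}^{t^*}\!\int_I \rho_t(y)\,dy\,dt$. Since Lemma~\ref{eq:derive-phi} gives $\dot{\fp}_t = -\tfrac12\rho_{\nu_t}$, and $\fp_{t^*}\ge 0$ by Corollary~\ref{l:pos-phi}, for every $y\in I$ I get
\[
\fp_{1-t_0}(y) \;\ge\; \fp_{1-t_0}(y)-\fp_{t^*}(y) \;=\; \tfrac12 \int_{1-t_0}^{t^*}\rho_t(y)\,dt.
\]
Integrating over $I$ and applying Lemma~\ref{l:estimate} yields
\[
\int_{1-t_0}^{t^*}\!\!\int_I \rho_t(y)\,dy\,dt \;\le\; 2\int_I \fp_{1-t_0}(y)\,dy \;\le\; 2\,\ell(x)^3\,\rho_{\mN(0,1)}(x)\,(1+o(1)).
\]

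The second step is to produce a lower bound on the same integral that contradicts the upper one. Lemma~\ref{l:encadrement} says the initial mass in $I$ is at least $\ell(x)\,\rho_{\mN(0,1)}(x)(1+o(1))$. Since no absorption happens inside $I$ on $[1-t_0,t^*]$, the only way mass leaves $I$ is through diffusive flux at $\partial I$. To control this flux I would consider the connected component $J$ of $\R\setminus \conn_{1-t_0}$ containing $I$: by Lemma~\ref{l:decomp-I}, $J$ cannot be too short, otherwise it would be fully absorbed before time $t^*$, contradicting $I\subset J\cap (\R\setminus \conn_{t^*})$; this forces $|J|\ge c_0\,\ell(x)$ for an explicit constant $c_0$ coming from $\alpha$ and the universal constant $\theta$ of Lemma~\ref{l:decomp-I}. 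An explicit spectral expansion of the Dirichlet heat semigroup on $J$, together with the near-constancy of $\rho_{1-t_0}$ at scale $\rho_{\mN(0,1)}(x)$ on $J$ (again from Lemma~\ref{l:encadrement}), then yields
\[
\int_{1-t_0}^{t^*}\!\!\int_I \rho_t(y)\,dy\,dt \;\ge\; c\,\ell(x)^2\,\rho_{\mN(0,1)}(x)
\]
for some $c>0$. Since $\ell(x)^2\gg \ell(x)^3$ as $|x|\to\infty$, this contradicts the upper bound.

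The main technical obstacle I expect is precisely this lower bound in the borderline regime where $|J|$ is merely comparable to $\ell(x)$: the first Dirichlet eigenvalue on $J$ is of order $\ell(x)^{-2}$ while the time window has length $\ell(x)\gg \ell(x)^2$, so the heat semigroup decays very fast and the naive Fourier estimate barely fails to contradict the upper bound. To close this constant-factor gap I would use a sharper form of Lemma~\ref{l:estimate}, which actually gives the extra factor $\tfrac{1}{2\beta_0+1}$ visible from its proof, or a more favorable choice of $c_0$ extracted from Lemma~\ref{l:decomp-I}, or a direct probabilistic computation of the expected occupation time of $I$ by Brownian paths starting in $J$ (which also picks up the contribution of paths entering $I$ from $J\setminus I$, strictly improving the pure Dirichlet estimate). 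Any such refinement is sufficient, because the gap $\ell(x)^{-1}$ between $\ell(x)^2$ and $\ell(x)^3$ is eventually arbitrarily large.
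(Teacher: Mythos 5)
Your strategy — proof by contradiction, lower bound on the time‑integrated density via the Dirichlet heat semigroup on the interval, comparison with the a priori decay of $\fp_{1-t_0}$ from Lemma~\ref{l:estimate} — is essentially the one the paper uses. The paper works pointwise at the midpoint $m$ of $I=[x,x+\ell(x)]$ with the first Dirichlet eigenfunction of $I$, whereas you also integrate in the space variable and propose an enlargement to a component $J\supset I$; these are variants of a single argument, and neither variant changes the orders of magnitude involved.

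The scaling concern you raise is genuine and, in fact, is passed over somewhat quickly in the paper's own proof: the claimed estimate that the Dirichlet decay factor $\exp\bigl\{-\frac{t-(1-t_0)}{2}\cdot\frac{\pi^2}{\ell(x)^2}\bigr\}$ stays $\ge 3/4$ throughout $t\in[1-t_0,\,1-t_0+\ell(x)]$ only holds when $t-(1-t_0)\lesssim\ell(x)^2$, not $\ell(x)$. What the Dirichlet estimate actually yields for the occupation at $m$ is $\int\rho_t(m)\,\mathrm{d}t\gtrsim\ell(x)^2\,\rho_{\mN(0,1)}(m)$ (an effective time window of order $\ell(x)^2$, not $\ell(x)$), which matches the order of the upper bound $\fp_{1-t_0}(m)\lesssim e^{-\beta_0 x^2}\rho_{\mN(0,1)}(m)=\ell(x)^2\rho_{\mN(0,1)}(m)$. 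So the contradiction must come from matching numerical constants, not from powers of $\ell(x)$; your ``$\ell(x)^2\gg\ell(x)^3$'' gap disappears once the Dirichlet decay is accounted for.

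Here is where I think your proposed repairs are not yet enough as stated. Enlarging to $J$ via Lemma~\ref{l:decomp-I} only yields $|J|\ge c_0\ell(x)$ with a fixed constant (coming from $\theta$), so the relevant Dirichlet eigenvalue is still of order $\ell(x)^{-2}$ and the relaxation time is still $\sim\ell(x)^2$; to get a polynomial improvement you would need $|J|\gtrsim\ell(x)^{1/2}$, which is not what that lemma produces. The ``re-feeding'' of $I$ by paths from $J\setminus I$ faces the same obstruction, since the whole component $J$ also drains on the $|J|^2$ time scale. The only one of your three suggestions that actually moves the needle is the sharper form of Lemma~\ref{l:estimate} with the explicit factor $\frac{1}{2\beta_0+1}$: since the construction is free to choose $t_0$, taking $t_0$ close to $1$ makes $\beta_0$ large and $\frac{1}{2\beta_0+1}$ small, which is what is needed to win the constant comparison $\frac{2}{3\pi^2}>\frac{1}{2\beta_0+1}$. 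But this requires saying so explicitly and fixing a concrete $t_0$; as written, none of the three ``any such refinement works'' options closes the gap on its own.
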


\begin{pf}
Assume the contrary: for instance, that $\forall y\in[x,x+\ell(x)]$,
$f_1(y) > (1-t_0) +\ell(x)$. This implies that the set $\conn_t$ does
not intersect the rectangle $[x,x+\ell(x)] \times[1-t_0, 1-t_0+\ell
(x)]$, and for any point of this rectangle, the density $\rho_t (y)$
can be estimated from below via the solution of the heat equation $\dot
{u} = \frac{1}{2} \Delta u$ on $[x,x+\ell(x)]$ with the initial
conditions $u_{1-t_0} = \rho_{1-t_0}$.

For all $|x|$ large enough, $\rho_{\mN(0,1)}$ varies on $[x,x+\ell
(x)]$ at most $2$ times, and hence we have a lower bound for the
initial condition $\forall y \in[x,x+\ell(x)]$
\[
\rho_{1-t_0}(y) \ge\frac{1}{3} \rho_{\mN(0,1)} (m)\ge
\frac{1}{3} \sin\biggl( \frac{\pi}{\ell(x)} \cdot(y-x) \biggr)
\cdot
\rho_{\mN(0,1)} (m),
\]
where $m= x+\frac{1}{2}\ell(x)$ is the middle of the interval
$[x,x+\ell(x)]$.
The function $\sin(\frac{\pi}{\ell(x)} \cdot(y-x) )$ is an
eigenfunction of the Laplace operator with the eigenvalue $\lambda=
\frac{\pi^2}{\ell(x)^2}$, and hence for all $t\in[1-t_0,1-t_0+\ell
(x)]$, we have a lower bound
\begin{eqnarray*}
\rho_t (y) &\ge& \frac{1}{3} \exp\biggl\{-\frac{t-(1-t_0)}{2}
\cdot\frac{\pi
^2}{\ell(x)^2}\biggr\} \cdot\sin\biggl(\frac{\pi}{\ell(x)}
\cdot(y-x)
\biggr) \cdot\rho_{\mN(0,1)} (m)
\\
&\ge& \frac{1}{4} \sin\biggl(\frac{\pi}{\ell(x)} \cdot(y-x)
\biggr) \cdot
\rho_{\mN(0,1)} (m).
\end{eqnarray*}
In particular, for the middle point $m$ of the interval we have
\[
\rho_t (m) \ge\tfrac{1}{4} \rho_{\mN(0,1)} (m).
\]
Thus,
%
%
\begin{equation}
\label{eq:mino} \int_{1-t_0}^{1-t_0+\ell(x)} \rho_t
(m) \,\mathrm{d}t \ge\frac{\ell
(x)}{4} \cdot\rho_{\mN(0,1)} (m).
\end{equation}
As $\ell(x) = e^{-\beta_0 x^2/2}$, we have due to Lemma \ref
{l:estimate}
\[
\fp_{1-t_0} (m) \le e^{-\beta_0 (x+\ell(x))^2} \cdot\rho_{\mN
(0,1)} (m).
\]
So, we have
\[
\int_{1-t_0}^{1-t_0+\ell(x)} \rho_t (m)
\,\mathrm{d}t >\fp_{1-t_0}(m).
\]
The obtained contradiction proves the lemma.
\end{pf}

We can now complete the proof of Proposition \ref{p:maj}.

\begin{pf*}{Proof of Proposition \ref{p:maj}}
Lemma \ref{l:points} implies that for any $|x|$ large enough, either
$f_1(x) \le1-t_0 +\ell(x)$ or the connected component $I$ of $\R
\setminus\conn_t$ that contains $x$ is a subset of $[x-\ell(x),
x+\ell(x)]$. We are now going to show that then $f_1(x) \le1-t_0 +
\ell(x) +\theta_1 \ell(x) ^2$, where the constant $\theta_1$ can be
chosen not depending on $x$. Indeed, due to Lemma \ref{l:coincidence},
we can consider the continuous finite expectation Brownian transport
problem from $\nu_{1-t_0+\ell(x)}|_I$ to $\hat{\mu}_1|_I$
independently of the rest of the real line. Let us then rescale $I$ to
$[0,1]$, normalizing the measures $\nu_{1-t_0+\ell(x)}|_I$ and $\hat
{\mu}_1|_I$ to probability ones, and rescaling the time by the factor
$\frac{1}{|I|^2}$.

The density of the new probability measure $\tilde{\mu}_1$ on $\tilde
{I}=[0,1]$ takes value on $[1/2,2]$ (as $\rho_{\mN(0,1)}$ varies at
most two times on $I$). Hence, it satisfies the assumptions of
Lemma \ref{l:decomp-I} with some uniform (not depending on $x$)
constant $\alpha$. Thus, the rescaled time in which the interval
``disappears'' is uniformly (for
$|x|$ large enough) bounded by some constant $\theta_3$, and hence
$x\in I\subset\conn_{(1-t_0)+\ell(x) +\theta_3 \ell(x)^2}$. As
$\ell
(x) \ll1$, the latter statement implies the desired upper bound for $f_1(x)$.
\end{pf*}

This completes the proof of Theorem \ref{t:sol-pb}: the function $f_1$
is bounded on~$\R$.

\section{Existence of a finite expectation Brownian transport}\label
{s:BT-exists}

\subsection{Finite expectation Brownian transport on the real
line}\label{s:B-transport}\label{ss:proof-thm}
In this paragraph, we will deduce Theorem \ref{thm:BT-line} from
Theorem \ref{thm:BT-interval} (which will be proved in the next
paragraph). To do so, assume that the measures $\mu_0,\mu_1$ satisfy
the assumptions of Theorem \ref{thm:BT-line}. Naturally, the idea here
will be to find a family of compactly supported measures $\mu_0^R$ and
$\mu_1^R$ that approximate $\mu_0$ and $\mu_1$ and for which there
exist continuous finite expectation Brownian transports. The simplest
case is when the measures $\mu_0, \mu_1$, in addition to be centered
are symmetric.

We will then consider the sequence of conditional normalized measures
\[
\tilde{\mu}_0^R:= \frac{\mu_{0}|_{[-R,R]}}{\mu_0([-R,R])} \quad\mbox{and}\quad
\tilde{\mu}_1^R:= \frac{\mu_{1}|_{[-R,R]}}{\mu_1([-R,R])}.
\]
For the case of general centered measures $\mu_0$ and $\mu_1$, we will
have to modify this construction, as their restrictions on $[-R,R]$ are
no longer forced to have the same mean. Namely, denote for any measure
$\mu$ such that $\mu((-\infty,0))>0$ and $\mu((0,\infty))>0$ by
$\gamma
(\mu)$ the measure
\[
\gamma(\mu):= c(\mu) \mu|_{(-\infty, 0)} + d (\mu) \mu
|_{(0,\infty)},
\]
where $(c(\mu), d(\mu))$ is the unique solution of the system
\[
\cases{ c (\mu) \mu\bigl((-\infty, 0)\bigr) + d(\mu) \mu\bigl((0,
\infty)\bigr) = 1,\vspace*{2pt}
\cr
\displaystyle -c(\mu) \int_{-\infty}^0
|x| \,\mathrm{d}\mu+ d(\mu) \int_0^\infty x
\,\mathrm{d}\mu=0. } %
\]
It is then easy to see that $\gamma(\mu)$ is always a centered measure
and we have $c (\tilde{\mu}_j^R) \mathop{\rightarrow}\limits_{R\to\infty} 1$ and $d
(\tilde{\mu}_j^R) \mathop{\rightarrow}\limits_{R\to\infty} 1$ (as the second
equation tends to $c = d$ as $R\to\infty$). Then we can consider the
families $\mu_0^R = \gamma(\tilde{\mu}_0^R)$ and $\mu_1^R = \gamma
(\tilde{\mu}_1^R)$.

Now we would like to consider continuous finite expectation Brownian
transports from $\mu_0^R$ to $\mu_1^R$, then extract a convergent
subsequence from the sequence of corresponding functions $f_R$, and
finally show that the limit function $f$ indeed defines a continuous
finite expectation Brownian transport from $\mu_0$ to $\mu_1$.
So, a first step in the realization of this scheme is to check that for
all $R$ large enough, Theorem \ref{thm:BT-interval} is indeed
applicable for finding a continuous finite expectation Brownian
transport from $\mu_0^R$ to $\mu_1^R$.

%
\begin{lemma}\label{l:contin}
For any $R$ large enough, there exists a continuous finite expectation
Brownian transport from $\mu_0^R$ to $\mu_1^R$.
\end{lemma}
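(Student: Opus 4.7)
The plan is to verify the five hypotheses of Theorem~\ref{thm:BT-interval} for the pair $(\mu_0^R,\mu_1^R)$ on $I=[-R,R]$, for $R$ sufficiently large; once done, Theorem~\ref{thm:BT-interval} directly delivers a continuous (and in fact bounded) Brownian transport from $\mu_0^R$ to $\mu_1^R$, so it remains only to check the five hypotheses in the limit $R\to\infty$.

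The preparatory observation is that $Z_j^R:=\mu_j([-R,R])\to 1$ (since $\mu_j$ is a probability measure) and $c(\tilde{\mu}_j^R),d(\tilde{\mu}_j^R)\to 1$ (as already noted just above the lemma). Hence $\rho_{\mu_j^R}$ is a piecewise multiplicative perturbation of $\rho_{\mu_j}|_{[-R,R]}$ with all multiplicative factors tending uniformly to $1$. This yields hypothesis (i) of Theorem~\ref{thm:BT-interval} at once; hypothesis (ii) follows from hypothesis (ii) of Theorem~\ref{thm:BT-line} up to a bounded factor; and hypothesis (iii) follows from hypothesis (iii) of Theorem~\ref{thm:BT-line} after absorbing the bounded constant into a slightly larger $\alpha$.

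Hypothesis (iv) is the strict positivity of $\fp_{\mu_0^R\to\mu_1^R}$ on the open interior $(-R,R)$; at the endpoints $\pm R$ the cost function necessarily vanishes because $\mu_0^R$ and $\mu_1^R$ are both probability measures on $[-R,R]$ with the same mean (this is the whole purpose of the $\gamma$-construction). On a fixed compact $[-r,r]$, the functions $\fp_{\mu_0^R\to\mu_1^R}$ converge uniformly to $\fp_{\mu_0\to\mu_1}$ as $R\to\infty$, and the latter is bounded below by some $\delta_r>0$ thanks to hypothesis (iv) of Theorem~\ref{thm:BT-line} and continuity, so positivity transfers for $R$ large. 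In the transition zone $r\le|x|<R$, the strategy is to use the representation $\fp_{\mu_0^R\to\mu_1^R}''=\rho_{\mu_1^R}-\rho_{\mu_0^R}$ together with the tail-density comparison from hypothesis (v) of Theorem~\ref{thm:BT-line} to control the shape of $\fp_{\mu_0^R\to\mu_1^R}$ and verify that it remains positive until it vanishes at $\pm R$. Hypothesis (v) of Theorem~\ref{thm:BT-interval} is then handled along the same lines, by translating hypothesis (v) of Theorem~\ref{thm:BT-line} through the $\gamma$-modification while tracking the factors $c,d,Z_j^R$.

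The main obstacle I anticipate is the analysis in the transition zone $r\le|x|<R$: the compact-convergence argument does not apply there, so one must show quantitatively, and \emph{uniformly} in large $R$, that $\fp_{\mu_0^R\to\mu_1^R}$ stays strictly positive despite having to vanish at $\pm R$, and that the local boundary density comparison required by hypothesis (v) of Theorem~\ref{thm:BT-interval} holds. Both of these rest on a careful quantitative reading of the tail hypothesis (v) of Theorem~\ref{thm:BT-line} combined with the bookkeeping of $c(\tilde\mu_j^R),d(\tilde\mu_j^R),Z_j^R$ as $R\to\infty$; any attempt to argue non-quantitatively is bound to miss the boundary cancellations.
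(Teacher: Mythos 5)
Your approach is the paper's approach: reduce to verifying hypotheses (i)--(v) of Theorem~\ref{thm:BT-interval} for $(\mu_0^R,\mu_1^R)$ on $I=[-R,R]$, treating (i)--(iii) as bookkeeping of the factors $\mu_j([-R,R])$, $c(\tilde\mu_j^R)$, $d(\tilde\mu_j^R)$, handling (iv) on a fixed compact by uniform convergence of $\fp_{\mu_0^R\to\mu_1^R}$ to $\fp_{\mu_0\to\mu_1}$, and deriving (v) from the tail condition (v) of Theorem~\ref{thm:BT-line} through the same factor bookkeeping. All of that matches the paper.

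The one point where you stop short — you anticipate needing a ``quantitative and uniform in $R$'' analysis to show $\fp_{\mu_0^R\to\mu_1^R}>0$ on the transition zone $M\le|x|<R$ despite vanishing at $\pm R$ — is precisely where the paper closes the argument with a one-line sign observation, and no quantitative tail estimate is required. Once $M$ is chosen (as in your verification of (v)) so that $\rho_{\mu_0^R}<\rho_{\mu_1^R}$ for $M\le|x|<R$, use
\begin{equation*}
\fp_{\mu_0^R\to\mu_1^R}(x)=\int_{-R}^{x}(x-s)\bigl(\rho_{\mu_1^R}(s)-\rho_{\mu_0^R}(s)\bigr)\,\mathrm{d}s,\qquad x\in(-R,-M],
\end{equation*}
whose integrand is nonnegative (and not identically zero) over the \emph{entire} integration range because $\mu_j^R$ is supported in $[-R,R]$; similarly $\fp_{\mu_0^R\to\mu_1^R}(x)=\int_x^{R}(s-x)(\rho_{\mu_1^R}-\rho_{\mu_0^R})\,\mathrm{d}s>0$ for $x\in[M,R)$. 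Equivalently, $\fp$ vanishes to second order at $\pm R$ and has positive second derivative $\rho_{\mu_1^R}-\rho_{\mu_0^R}$ throughout the transition zone, hence is positive there. So the gap you flagged is exactly where a real step is missing, but the missing step is a soft monotonicity argument rather than the hard uniform estimate you were bracing for.
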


\begin{pf}
We have to check that the assumptions of Theorem \ref{thm:BT-interval}
are satisfied for all $R$ large enough. As the conditions (i)--(iii)
are the same in Theorems \ref{thm:BT-line} and \ref{thm:BT-interval},
we only have to check the two last ones.

Recall that we have $\lambda:= \limsup_{x\to\infty} \frac{\rho
_{\mu
_0}(x)}{\rho_{\mu_1}(x)}<1$. Hence, for some constant $M$, we have
$\frac{\rho_{\mu_0}(x)}{\rho_{\mu_1}(x)}< \frac{1+\lambda}{2}$ outside
$[-M,M]$. Now, for $x\in(-M,M)$, we have
\[
\frac{\rho_{\mu_0^R}(x)}{\rho_{\mu_1^R}(x)} = \frac{\rho_{\mu
_0}(x)}{\rho_{\mu_1}(x)} \cdot\frac{\mu_1 ([-R,R])}{\mu_0
([-R,R])} \cdot\biggl(
\frac{c(\tilde{\mu}_0^R)}{c(\tilde{\mu}_1^R)}\cdot\1 _{x<0} +
\frac{d(\tilde{\mu}_0^R)}{d(\tilde{\mu}_1^R)}\cdot
\1_{x\ge0} \biggr).
\]
Note that the second factor in the right-hand side tends (uniformly) to
1 as $R\to\infty$. Thus, for any $R$ large enough, it is less than
$\frac{2}{1+\lambda}$, and hence $\exists M\dvtx\forall|x|>M, \frac
{\rho_{\mu_0^R}(x)}{\rho_{\mu_1^R}(x)} < \frac{2}{1+\lambda}
\cdot\frac
{1+\lambda}{2} = 1$. This proves the desired condition (v).

Moreover, note that due to the finiteness of the first moment of $\mu
_0$ and $\mu_1$, we have $\fp_{\mu_0^R \to\mu_1^R}(x)
\mathop{\rightarrow}\limits_{R\to
\infty
} \fp_{\mu_0 \to\mu_1}(x)$ uniformly on $x\in[-M,M]$. Thus, for all
$R$ large enough, we have $\fp_{\mu_0^R \to\mu_1^R}>0$ on $[-M,M]$.

Next, for all $R>M$ and $x\in(-R,-M]$, we have
\begin{eqnarray*}
\fp_{\mu_0^R \to\mu_1^R} (x)& = &\int_{-\infty}^x \bigl(
\mu_1^R - \mu_0^R\bigr) \bigl((-
\infty,s]\bigr) \,\mathrm{d}s \\
&= &\int_{-\infty}^x (x-s) \bigl(
\rho_{\mu
_1^R}(s) - \rho_{\mu_0^R}(s)\bigr) \,\mathrm{d}s >0.
\end{eqnarray*}
Finally, if $R>M$ and $x\in[M,R)$, we have
\begin{eqnarray*}
\fp_{\mu_0^R \to\mu_1^R} (x) &=& \int^{\infty}_x \bigl(
\mu_1^R - \mu_0^R\bigr)
\bigl([s,+\infty)\bigr) \,\mathrm{d}s \\
&=& \int^{\infty}_x
(s-x) \bigl(\rho_{\mu
_1^R}(s) - \rho_{\mu_0^R}(s)\bigr) \,\mathrm{d}s
>0.
\end{eqnarray*}
Thus, for all $R$ large enough and all $x\in(-R,R)$, we have $\fp
_{\mu
_0^R \to\mu_1^R} (x)>0$. This proves (iv), and thus completes the proof.
\end{pf}

We will choose and fix a value $R_0\ge1$ such that for any $R>R_0$,
there exists a continuous finite expectation Brownian transport from
$\mu_0^R$ to $\mu_1^R$, and we will consider the corresponding family
of stopping functions $f_R$.

A next step is to assure the possibility of extracting a convergent
subsequence from the family of functions $f_R$.
%

\begin{proposition}\label{p:pre-compact}
The family $(f_R)$ is precompact in the topology of uniform convergence
on the compact sets.
\end{proposition}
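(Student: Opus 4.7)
The plan is to apply the Arzel\`a--Ascoli theorem on every compact interval $[-A,A]$, so the task splits into (a) a uniform bound $\sup_{R>R_0}\|f_R\|_{L^\infty([-A,A])}<\infty$ and (b) equicontinuity of the family $(f_R)_{R>R_0}$ on $[-A,A]$.

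For (a), I would first transfer the hypotheses of Theorem~\ref{thm:BT-line} from the original measures to the truncated ones. Since $\mu_j([-R,R])\to 1$ and the normalizing constants $c(\tilde\mu_j^R), d(\tilde\mu_j^R)\to 1$ as $R\to\infty$, the densities $\rho_{\mu_0^R},\rho_{\mu_1^R}$ admit uniform-in-$R$ lower and upper bounds on $[-A,A]$, and the exponential lower bound $\mu_1^R(J)\ge e^{-\alpha_A'/|J|}$ holds for every $J\subset[-A,A]$ with $\alpha_A'$ independent of $R$. Moreover $\fp_0^R = \fp_{\mu_0^R\to\mu_1^R}$ converges uniformly on $[-A,A]$ to $\fp_{\mu_0\to\mu_1}$ and is therefore bounded there by some $C_A$. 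Iterating Lemma~\ref{l:decomp-I} on the connected components of $\R\setminus\conn_t^R$ that meet $[-A,A]$ (a component reaching outside $[-A,A]$ is pinched off within a controlled time thanks to the small mass at infinity), I would derive a uniform bound $f_R\le M_A$ on $[-A,A]$.

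Part (b) is the heart of the matter. Fix $x_0\in[-A,A]$, set $t_0:=f_R(x_0)$, and take $y$ near $x_0$. If $y\in\conn_{t_0}^R$, then $f_R(y)\le t_0$; otherwise $y$ belongs to a connected component $I_y$ of $\R\setminus\conn_{t_0}^R$ whose closure contains $x_0$ (by continuity of $f_R$), and Lemma~\ref{l:decomp-I} yields the key estimate $f_R(y)-t_0\le \theta\alpha_{A+1}'|I_y|$. For each fixed $R$, one has $|I_y|\to 0$ as $y\to x_0$. The main obstacle is to make this shrinking rate uniform in $R$. I would tackle this by a diagonal extraction: pass to a subsequence along which the closed sets $\conn_t^R\cap[-A-1,A+1]$ converge in Hausdorff distance for every rational $t\in[0,M_{A+1}]$. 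The $R$-independent constants $\theta$ and $\alpha_{A+1}'$ from part (a), combined with this Hausdorff control, upgrade pointwise continuity into a uniform modulus $\omega_A$. Arzel\`a--Ascoli then furnishes a subsequence of $(f_R)$ converging uniformly on every compact set, establishing the claimed precompactness.
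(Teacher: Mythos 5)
Your overall strategy (uniform local boundedness plus equicontinuity, then Arzel\`a--Ascoli) matches the paper's, and your sketch of the boundedness part is in the right direction, if loose on details. The equicontinuity argument, which you rightly call the heart of the matter, has a genuine error and a genuine gap.

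The error: you assert that for fixed $R$, the connected component $I_y$ of $\R\setminus\conn^R_{t_0}$ containing $y$ (with $t_0=f_R(x_0)$ and $x_0\in\partial I_y$) has $|I_y|\to 0$ as $y\to x_0$. This is false. $I_y$ is a connected component of the open set $\{f_R>t_0\}$, and once $y$ is close enough to $x_0$, $I_y$ is simply the fixed component whose boundary contains $x_0$; it does not shrink. For instance, if $f_R$ is strictly increasing on $[x_0,x_0+1]$, then for all $y$ slightly to the right of $x_0$, $I_y$ contains $(x_0,x_0+1)$. So the bound $f_R(y)-t_0\le\theta\alpha'|I_y|$ is correct but useless here: it does not even give pointwise continuity, let alone an $R$-uniform modulus.

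The gap: your remedy, a diagonal extraction yielding Hausdorff convergence of the sublevel sets $\conn^R_t$ for rational $t$, is neither proved nor obviously sufficient. Precompactness of closed sets in Hausdorff distance gives you convergence along a subsequence, yes, but (i) Hausdorff convergence of sublevel sets is a weak form of epi-convergence and does not by itself imply uniform, or even pointwise, convergence of the $f_R$ (oscillating sequences provide counterexamples unless you import more structure), and (ii) even granting a limit $\conn^\infty_t$, you have not shown that the ``no thin high peaks'' estimate transfers into a modulus of continuity for the limit or for the approximating functions. The paper's proof of Proposition~\ref{p:equicont} is quantitative and does not rely on any limiting argument: if $f_R(y)-f_R(x)>\varepsilon$ while $|y-x|$ is small, then between the times $t_1=f_R(x)$ and $t_2\ge t_1+\varepsilon/2$ a definite mass of Brownian trajectories must escape $[x,x+\delta_0]$ through its left end and hence intersect the graph above $[x,y]$; a heat-kernel eigenfunction comparison bounds this mass below by an $R$-independent constant times $\varepsilon$, while $\mu_1([x,y])\le b'(y-x)$ bounds it above. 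This tension forces $|y-x|$ to be bounded below, which is exactly the modulus~\eqref{eq:modulus}. This quantitative step, powered by the two-sided density bounds $\rho_{\mu_0}\ge a'$ and $\rho_{\mu_1}\le b'$ from hypothesis~ii), is what your proposal is missing.
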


This proposition, due to the Arzel\`a--Ascoli theorem, is equivalent to
the union of the following two results.

%
\begin{lemma}\label{l:lip}
The family of functions $(f_R)$ is locally uniformly bounded: for any
interval $I= [-\ell,\ell]$, there exists $C'=C'(\ell)$ such that
$\forall R\ge R_0$, we have $f_R|_{I} \le C'$.
\end{lemma}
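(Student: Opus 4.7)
My plan is to combine a global integrability bound for the stopping time with Lemmas~\ref{l:decomp-I} and~\ref{l:coincidence}. Since each stopping time $T_R$ is a.s.\ bounded (by Theorem~\ref{thm:BT-interval}), optional stopping applied to the martingale $B_t^2-t$ gives $\E[T_R]=\Var(\mu_1^R)-\Var(\mu_0^R)$; the convergence $\mu_i^R\to\mu_i$ together with the finite second moment assumption on $\mu_0,\mu_1$ makes the right-hand side bounded by some $V<\infty$ uniformly in $R\ge R_0$. Markov's inequality then yields $|\nu_t^R|=\bbP(T_R>t)\le V/t$ for every $t>0$.

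Next I use this bound to control the connected component of $\R\setminus\conn_t^R$ containing a given $x_0\in[-\ell,\ell]$. For $t<f_R(x_0)$, let $I_t(x_0)=(a,b)$ be this component; since $\fp_t^R$ vanishes at both endpoints, Lemma~\ref{l:coincidence} applied to the pair $(\tilde\nu_t^R,\mu_1^R)$ yields $\mu_1^R(I_t(x_0))=\nu_t^R(I_t(x_0))\le V/t$. Since $\mu_1$ has full support in $\R$, I can fix $M'>\ell$ and $\varepsilon>0$ so that, for all $R$ large enough, both $\mu_1^R((-M',-\ell))$ and $\mu_1^R((\ell,M'))$ exceed $\varepsilon$. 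Choosing $t=T^*:=V/\varepsilon$ then forces $\mu_1^R(I_{T^*}(x_0))<\varepsilon$: if $a<-M'$, then $I_{T^*}(x_0)\supset(-M',-\ell)$, whence $\mu_1^R(I_{T^*}(x_0))>\varepsilon$, contradiction; similarly $b\le M'$. Hence $I_{T^*}(x_0)\subset[-M',M']$ and in particular $|I_{T^*}(x_0)|\le 2M'$.

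The conclusion will come from Lemma~\ref{l:decomp-I}: applied to $I_{T^*}(x_0)\subset[-M',M']$ with an exponential constant $\alpha'$ obtained from hypothesis iii) of Theorem~\ref{thm:BT-line} specialized to intervals $J\subset[-M',M']$ (which provides a constant depending only on $M'$, once one absorbs the slowly varying factor $\mu_1^R(J)/\mu_1(J)\to 1$), it gives $I_{T^*}(x_0)\subset\conn_{T^*+2\theta\alpha'M'}^R$, so $f_R(x_0)\le T^*+2\theta\alpha'M'=:C'(\ell)$, uniformly in $R\ge R_0$ and $x_0\in[-\ell,\ell]$. The main (though technical rather than conceptual) obstacle will be to verify uniformity in $R$ of the various constants---$\varepsilon$, $M'$ and especially $\alpha'$---which relies on the local convergences $\mu_i^R\to\mu_i$ on bounded intervals and on hypotheses ii)--iii) of Theorem~\ref{thm:BT-line} providing bounds that depend only on the localization scale $M'$, not on the truncation parameter $R$.
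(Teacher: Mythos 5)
Your argument is correct but takes a genuinely different route from the paper's. Both proofs reduce to the same endgame: show that the unstopped mass $\nu_t^R(\R\setminus\conn_t^R)$ is small, uniformly in $R$, at some fixed time, so that the connected component of $\R\setminus\conn_t^R$ containing a given $x_0\in[-\ell,\ell]$ is trapped in a fixed bounded interval, and then apply Lemma~\ref{l:decomp-I}. Where you differ is in how this smallness is obtained. The paper's route is qualitative and two-stage: it first uses the divergence of $\int_0^{C''}\rho_{BM}(t)\,\mathrm{d}t$ compared with the uniform bound on $\fp^R(0)$ to produce a point in a fixed window $[-\ell_1,\ell_1]$ where $f_R\le C''$, and then invokes the recurrence of linear Brownian motion to show that, by a uniform time $\bar t$, almost every trajectory has crossed the already-frozen rectangle $[-\ell_1,\ell_1]\times[C'',\bar t]$. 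Your route is quantitative and one-step: optional stopping on $X_t^2-t$ gives $\E T_R=\Var\mu_1^R-\Var\mu_0^R$, which is uniformly bounded because $\mu_i^R\to\mu_i$ with finite second moments, and then Markov's inequality yields $\nu_t^R(\R)\le V/t$ directly. This is shorter and conceptually cleaner, and in fact exploits the very identity the paper uses a page later in closing the proof of Theorem~\ref{thm:BT-line}; it sidesteps both the density-divergence step and the recurrence argument. The uniformity you flag as a remaining obstacle does go through: hypothesis iii) applied with $R=M'$ gives $\mu_1(J)\ge e^{-\alpha_{M'}/|J|}$ for $J\subset[-M',M']$, and since $\mu_1^R(J)\ge c_0\,\mu_1(J)$ on $[-M',M']$ for some $c_0>0$ independent of $R$ (from $\mu_1([-R,R])\to 1$ and $c(\tilde\mu_1^R),d(\tilde\mu_1^R)\to 1$), one may absorb $|\ln c_0|$ into the exponent and take $\alpha'=\alpha_{M'}+2M'\,|\ln c_0|$.
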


%
\begin{proposition}\label{p:equicont}
Let $\mu_0,\mu_1$ be two probability measures, supported on a finite or
infinite interval $I\subset\R$,
for which there exists a continuous finite expectation Brownian
transport from $\mu_0$ to $\mu_1$ with some stopping function $f$.
Assume that, for an interval $I' \subset I$ and a constant $C'>0$, the
following holds:
\begin{longlist}[(iii)]
\item[(i)]$\mu_0, \mu_1$ satisfy the hypotheses of Theorem \ref
{thm:BT-interval} on $\mathcal{U}_1(I')\cap I$.
\item[(ii)]$f|_{\mathcal{U}_1(I')\cap I} \le C'$.
\item[(iii)]$\mu_0|_{I'}$ and $\mu_1|_{I'}$ satisfy the hypotheses of
Theorem \ref{thm:BT-interval} for some constants $a',b',\alpha'$.
\end{longlist}
Let $\delta_0:= \min\{\frac{\varepsilon}{3\theta_0 \alpha'},
\frac
{1}{2}\}$.
Then the inverse of the modulus of continuity of $f|_{I'}$, denoted by
$\delta_{f|_{I'}}(\varepsilon)$, is lower bounded by
%
%
\begin{equation}
\label{eq:modulus} \delta_{f|_{I'}} (\varepsilon) \ge\frac
{\varepsilon\pi\cdot
a'}{2\delta_0 \cdot b'} \exp
\biggl\{-\frac{\pi^2 C'}{\delta_0^2}\biggr\}.
\end{equation}
\end{proposition}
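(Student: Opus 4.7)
The plan is to argue by contradiction. Suppose $x, y \in I'$ with $x < y$, $|y - x| = \delta$, and $f(y) > f(x) + \varepsilon$; I will show $\delta$ is bounded below by the claimed quantity. Set $s^* := f(x)$ and, for $s \in [s^*, f(y))$, let $J(s)$ denote the connected component of $\R\setminus K_s$ that contains $y$.

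First I would observe that Lemma~\ref{l:decomp-I} combined with the choice $\delta_0 \le \varepsilon/(3\theta_0 \alpha')$ forces $|J(s)| \ge \delta_0$ for every $s \in [s^*, s^* + 2\varepsilon/3]$: otherwise $J(s)$ would be absorbed within $\theta_0\alpha'\delta_0 \le \varepsilon/3$ time units, forcing $y$ into $K$ before time $s^*+\varepsilon < f(y)$, a contradiction. Since $x \in K_s$ for $s \ge s^*$ and $x < y$, the left endpoint $l(s)$ of $J(s)$ satisfies $x \le l(s) \le y$; in particular $y - l(s) \le \delta$ and $[x, x+\delta_0]\subseteq J(s^*)\cup\{x\}$.

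Next I would establish a lower bound $\rho_s(y)\gtrsim (a'\delta/\delta_0)\exp(-\pi^2 s/\delta_0^2)$ on this time window by a sub-process comparison. Using the stochastic interpretation of $\rho_s$ as the density at $y$ of the not-yet-stopped trajectories of $(\tilde X_t)$, I consider the smaller class of trajectories starting from the truncated initial density $a'\,\1_{[x,x+\delta_0]} \le \rho_{\mu_0}|_{[x,x+\delta_0]}$ (valid by hypothesis ii)) which, in addition to never hitting $K_r$, remain inside $[x, x+\delta_0]$ for all $r \le s$. Every such trajectory is tautologically also a not-yet-stopped trajectory, so the density of this sub-process at $y$ is a pointwise lower bound for $\rho_s(y)$. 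That sub-process density obeys a Dirichlet heat equation on the fixed interval $[x, x+\delta_0]$ with extra killing on $K_r\cap[x,x+\delta_0]$; projecting the initial datum onto the first Dirichlet eigenfunction $\sin(\pi(z-x)/\delta_0)$ and using $\sin(\pi\delta/\delta_0) \gtrsim \delta/\delta_0$ yields the stated bound. Integrating the identity $\fp_{s^*}(y) = \tfrac12\int_{s^*}^{f(y)}\rho_s(y)\,\mathrm{d}s$ over a sub-interval of length $\ge\varepsilon$ with $s \le C'$ gives
\[
\fp_{s^*}(y) \;\gtrsim\; \frac{a'\,\varepsilon\,\delta}{\delta_0}\,\exp\!\left(-\frac{\pi^2 C'}{\delta_0^2}\right).
\]

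For the matching upper bound on $\fp_{s^*}(y)$, Lemma~\ref{l:coincidence} applied at $x$ (using $\fp_{s^*}\ge 0$ and $\fp_{s^*}(x)=0$) yields $\partial_z\fp_{s^*}(x) = F_{\mu_1}(x) - F_{\tilde\nu_{s^*}}(x) = 0$. Because $\partial_z^2 \fp_{s^*}(z) = \rho_{\mu_1}(z) - \rho_{\tilde\nu_{s^*}}(z) \le \rho_{\mu_1}(z) \le b'$ on the relevant range, integrating twice from $x$ produces $\fp_{s^*}(y) \le \tfrac{b'}{2}\delta^2$. Combining with the lower bound yields $\delta \gtrsim (\varepsilon a'/(\delta_0 b'))\exp(-\pi^2 C'/\delta_0^2)$, matching the claim up to the precise leading constant. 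The main technical obstacle is the sub-process comparison: $\rho_s$ evolves on the time-dependent domain $\R\setminus K_s$, so a naive maximum-principle comparison with a function satisfying only fixed Dirichlet data on $\partial[x,x+\delta_0]$ fails at points of $K_s$ that enter this interval. The resolution sketched above stays at the probabilistic level, where the sub-process inclusion is automatic; the sharp matching of constants (in particular the exponent $\pi^2 C'/\delta_0^2$ rather than $\pi^2 C'/(2\delta_0^2)$ from the bare heat kernel on $[0,\delta_0]$, and the precise factor $\pi/2$) is cosmetic and can be absorbed by a slightly different choice of comparison interval.
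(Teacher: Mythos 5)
Your overall architecture mirrors the paper's: work on the window $[x,x+\delta_0]$ with $\delta_0$ chosen via Lemma~\ref{l:decomp-I}, run a first Dirichlet eigenfunction comparison to get an $\exp(-\pi^2 C'/\delta_0^2)$ factor, and balance against a $b'$-type upper estimate. Your Taylor-expansion upper bound $\fp_{s^*}(y)\le \tfrac{b'}{2}\delta^2$ (using $\fp_{s^*}(x)=\partial_z\fp_{s^*}(x)=0$ and $\partial_z^2\fp_{s^*}\le \rho_{\mu_1}\le b'$) is a valid and arguably cleaner substitute for the paper's direct use of $\mu_1([x,y])\le b'\delta$.

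However, the lower bound step has a genuine gap, and it is exactly the one you flag as ``the main technical obstacle'' without actually resolving it. You want $\rho_s(y)\gtrsim (a'\delta/\delta_0)e^{-\pi^2 s/\delta_0^2}$ by taking the sub-process of not-yet-stopped trajectories that also stay in $[x,x+\delta_0]$; this sub-process does lower-bound $\rho_s(y)$, but its density is governed by the Dirichlet heat equation on $[x,x+\delta_0]$ \emph{with additional absorption on} $K_r\cap(x,x+\delta_0)$, and extra interior killing can only push the density \emph{below} the pure-Dirichlet eigenfunction solution, not above it. The invasion is a real possibility here: for $r\in(s^*,s)$ the set $K_r$ can enter $(x,y)$ at the moving left endpoint $l(r)$ of $J(r)$ (indeed $l(r)\in K_r$ and $l(r)$ may drift arbitrarily close to $y$), and $|J(s)|\ge\delta_0$ constrains $r(s)$, not $l(s)$; your claim $[x,x+\delta_0]\subseteq J(s^*)\cup\{x\}$ also tacitly assumes $l(s^*)=x$, which needs the paper's ``rightmost point of the level set'' preparation. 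This is not a matter of constants or choice of comparison interval. The paper avoids the difficulty entirely by not estimating a single-point density at $y$: it estimates the \emph{flux at $x$} of the pure Dirichlet heat on $[x,x+\delta_0]$ over $(t_1,t_2)$ and argues that every Brownian path counted by that flux has its first intersection with the graph somewhere in $[x,y]$ (regardless of whether it was already absorbed at an interior point of $(x,y)\cap K_r$ before it could reach $x$), so the flux lower-bounds $\mu_1([x,y])$ outright. If you want to keep your $\fp$-based formulation, you would need to convert the integrand to that same left-end flux quantity rather than $\rho_s(y)$.
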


\begin{pf*}{Proof of Lemma \ref{l:lip}}
We will first prove that the functions $f_R$ ``take small values
somewhere.'' Namely, that there exist some constants $\ell_1,C''$ such
that $\forall R\ge R_0$, $\exists x\in[-\ell_1, \ell_1 ]$: $f_R(x)
\le
C''$. Indeed, as we have already mentioned, the functions $\fp^R:=
\fp
_{\mu_0^R \rightarrow\mu_1^R}$ converge to the function $\fp:= \fp
_{\mu_0 \rightarrow\mu_1}$. In particular, the values $\fp^R (0)$ are
uniformly bounded by some constant $C_1$.

Now, let us consider a Brownian motion started from $\mu_0|_{[-1,1]}$.
Its density $\rho_{\mathrm{BM}}$ at 0 has an asymptotics of $\frac{1}{\sqrt
{t}}$, and thus, its integral diverges. Hence, there exists $C''$ such that
%
%
\begin{equation}
\label{eq:min-BM} \int_0^{C''} \rho_{\mathrm{BM}}(t)
\,\mathrm{d}t > C_1.
\end{equation}
By continuity, \eqref{eq:min-BM} holds also in the case of the density
$\rho$ of the process starting with an initial measure $\mu
_0^R|_{[-1,1]} > \mu_0|_{[-1,1]} $, and which trajectories are stopped
outside a large enough interval $[-\ell_1,\ell_1]$. Hence, for any $R$
large enough (so that $\mu_0^R|_{[-1,1]}$ is close enough to $\mu
_0|_{[-1,1]}$), there exists $x\in[-\ell_1,\ell_1]$ such that $f(x)
\le C''$. Indeed, otherwise, we would have an inequality
\[
\int_0^{C''} \rho_t^R(0)
\,\mathrm{d}t > \fp_{\mu_0^R \rightarrow\mu
_1^R} (0),
\]
which would be a contradiction.

Now, for the finite expectation Brownian transport from $\mu_0^R$ to
$\mu_1^R$, let us consider the total measure $\nu_t (\R\setminus
\conn
_t)$ of the not yet stopped trajectories at some time $t$. Note that,
due to the recurrence of the Brownian motion on $\R$: $\forall
\varepsilon>0$, $\forall\ell_2$, there exists a time $\bar{t}= \bar
{t}(\varepsilon, \ell_2)$ such that for any $x\in[-\ell_2,\ell
_2]$, a
Brownian trajectory, starting at $x$, crosses the rectangle $[-\ell
_1,\ell_1] \times[C'',\bar{t}]$ left to right with probability at
least $1-\varepsilon$.

Choose now $\ell_2$ large enough so that $\forall R\ge R_0$, $\mu_0^R
([-\ell_2,\ell_2]) \ge1-\varepsilon$. Then, for any $R\ge R_0$, the
total measure $\nu_{\bar{t}}(\R\setminus\conn_{\bar{t}})$ of the not
yet stopped trajectories at time $\bar{t}$ will be at most
$2\varepsilon
$, as crossing the rectangle implies stopping due to the choice of
$\ell
_1$ and $C''$. In particular, taking
\[
\varepsilon:= \tfrac{1}{4} \min\bigl(\mu_0(-\ell-1, -\ell),
\mu_0(\ell,\ell+1)\bigr),
\]
we see that
\begin{eqnarray*}
\nu_{\bar{t}} (\R\setminus\conn_t) &\le& \tfrac{1}{2}
\mu_0(-\ell-1,-\ell) \le\mu_0^R (-\ell-1, -
\ell),
\\
\nu_{\bar{t}} (\R\setminus\conn_t) &\le& \tfrac{1}{2}
\mu_0(\ell,\ell+1) \le\mu_0^R (\ell, \ell+1).
\end{eqnarray*}
Hence, any connected component of $\R\setminus\conn_{\bar{t}}$ that
intersects $I=(-\ell, \ell)$ is contained in $(-\ell-1,\ell+1)$.

Applying now Lemma \ref{l:decomp-I} for all the connected components of
$\R\setminus\conn_{\bar{t}}$ that intersect $I$, we conclude that all
of them disappear in at most time $\theta\cdot\alpha_{\ell+1} \cdot
|[-\ell-1, \ell+1]|$. Hence, $\forall R\ge R_0$, $f_R|_{[-\ell, \ell]}
\le\bar{t} + \theta\cdot\alpha_{\ell+1} \cdot(2\ell+2)$ and we
have the desired upper bound.
\end{pf*}

We are now ready to prove the uniform continuity for the family $f_R$,
that is, Proposition \ref{p:equicont}. A basic idea here is the
following one: assume that the function $f$ is smooth and (piecewise)
monotonic. Then, considering a point $x$ in a neighborhood of which $f$
is monotonically increasing, we see that between the moments $t=f(x)$
and $t+\Delta t = f(x+\Delta x)$, the left end of the interval of
complement to $\conn_t$ absorbs approximatively the mass $\Delta t
\cdot\rho_t'(x)$ of Brownian particles and this should be equal to the
mass $\mu_1$ of the interval $[x,x+\Delta x]$. Hence,
\[
\Delta t \approx\frac{\mu_1([x,x+\Delta x])}{\rho_t'(x) } \approx
\frac
{\rho_{\mu_1} (x) }{\rho_t'(x)} \cdot\Delta x.
\]
Estimating from above the numerator by $b$, and from below the
denominator (by a comparison with the heat equation on an interval), we
obtain the desired bound for $f' = \frac{\Delta t}{\Delta x}$. Let us
now make these computations rigorous.

\begin{pf*}{Proof of Proposition \ref{p:equicont}}
Note first that Lemma \ref{l:decomp-I} guarantees that the functions
$f|_I$ cannot have ``high thin peaks'': if $y,z\in\mathcal
{U}_1(I')\cap I$ and $f(y) = f(z)$, then
\[
\max_{x\in[y,z]} f(x) \le f(y) + \theta\alpha'
\cdot\bigl|[y,z]\bigr|.
\]
Now, take $\delta= \delta_0 = \min(\frac{\varepsilon}{\theta_0
\alpha'}, \frac{1}{2} )$ and let us show the estimate \eqref
{eq:modulus}. Namely, assume first that $x,y\in I'$ with the distance
between $x$ and $y$ less than the right-hand side of \eqref
{eq:modulus}. We want to show that $|f(x) - f(y)| \le\varepsilon$.
Without any loss of generality, we can assume that $f(x) <f(y)$. We can
also assume that $\forall x' \in[x,y]$, $f(x') >f(x)$ (as otherwise,
we can replace $x$ with the rightmost point $x'$ of the level set
$f^{-1}(f(x)) \cap[x,y]$).

Consider now the behavior of $f$ on $[x,x+\delta_0]$. Denote $t_1
=f(x)$ and $t_2= \min_{[y, x+\delta_0]} f$. Due to Lemma \ref
{l:decomp-I} and the choice of $\delta_0$, we have
\[
f(y) \le\max(t_1,t_2) + \theta\alpha'
\delta_0 \le\max(t_1,t_2) +
\frac{\varepsilon}{2}.
\]
Thus, if $t_2\le t_1 + \frac{\varepsilon}{2}$, everything is proven.
(In particular, this rules out the case of $x+\delta_0$ falling outside
$I$: the lower limit of $f$ at an endpoint of $I$ is zero.)

Thus, we can assume that $t_2>t_1 + \frac{\varepsilon}{2}$. Consider
now the Brownian paths of the process $X_t$ that were not stopped. Note
that any such path, starting anywhere in $[x,x+\delta_0]$, stays in
this interval until the moment $t_1$ and then leaves it through the
left end before the moment $t_2$, as shown in Figure \ref{fig:2} below.
The first intersection point of such a path with the graph of $f$ is
somewhere above $[x,y]$. Hence, the measure $\mu_1([x,y])$ is greater
or equal to the measure of such paths.

%
\begin{figure}[b]

\includegraphics{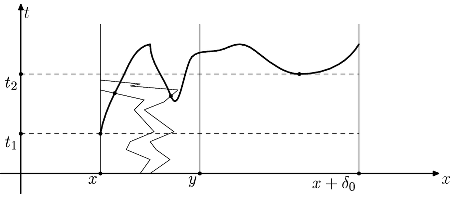}
\caption{Two Brownian paths crossing the graph of $f$.}\label{fig:2}
\end{figure}

Finally, we can easily estimate this measure from below through the
heat equation. Namely, the condition $\rho_{\mu_0}|_I \ge a$ allows us
to estimate the initial density on $[x,x+\delta]$ from below by an
eigenfunction of the Laplace operator, that is $u(z) = a \sin\frac
{\pi
(z-x)}{\delta_0}$ with the eigenvalue $\lambda= \frac{\pi^2}{\delta
_0^2}$. Hence,\vspace*{-2pt} the density of the trajectories that have never left
$[x,x+\delta]$ up to time $t$ is greater than $e^{-\lambda t} \cdot a'
\sin\frac{\pi(z-x)}{\delta_0}$, and thus the density of those who are
first-leaving the interval through its left end is at least $a' \frac
{\pi}{\delta_0} e^{-\lambda t}$. The total mass of the trajectories
leaving between the moments $t_1$ and $t_2$ is
\[
\int_{t_1}^{t_2} a \frac{\pi}{\delta_0}
e^{-\lambda t} \,\mathrm{d}t \ge a' \frac{\pi}{\delta_0}
(t_2-t_1) e^{-\lambda t_2}.
\]
As we have $t_2-t_1 \ge\frac{\varepsilon}{2}$ and $t_2\le C'$, we
finally have obtained a lower bound for the total mass of such
trajectories and thus for $\mu_1([x,y])$. This lower bound is given by
\[
a' \frac{\pi\cdot\varepsilon}{\delta_0 \cdot2} e^{-C' \pi
^2/\delta_0^2}.
\]
Though, due to our assumption, $\mu_1([x,y]) \le b' (y-x)$, and due to
our choice of $\delta(\varepsilon)$, this gives us a contradiction.
\end{pf*}

Having proved both Lemma \ref{l:lip} and Proposition \ref{p:equicont},
we have thus proved Proposition \ref{p:pre-compact}. We are now ready
to start concluding the proof of Theorem \ref{thm:BT-line}. Namely, as
the family $(f_R)$ is precompact, there exists a convergent subsequence
$f_{R_k} \mathop{\rightarrow}\limits_{k\to\infty} f$. A natural conclusion would
then be that the first intersection measure with the graph of $f$ for
the initial measure $\mu_0 = \lim\mu_0^{R_k}$ is exactly $\mu_1 =
\lim
\mu_1^{R_k}$. To make this argument work rigorously, we will need the
following.
%

\begin{definition}
Let $f\in C(\R,\R_+)$ be a continuous positive function and $x\in\R$.
The \emph{first intersection measure} $m_{x,f}$ is defined as the law
of the $x$-coordinate of the first intersection between the graph of
$f$ and the trajectory of the Brownian motion started from the point
$x$: $X_t= x+B_t$, $T=\inf\{t\ge0\dvtx t=f(X_t)\}$ and $m_{x,f}=
\operatorname
{Law}(X_T)$. Similarly, we denote by $m_{\mu,f}$ the first intersection
measure between the process started from the distribution $\mu$ and the
graph of the stopping function $f$.
\end{definition}

%
\begin{proposition}\label{p:intersection}
The first intersection measure $m_{x,f}$ depends continuously (in the
sense of the weak* convergence) on $x\in\R$ and $f\in C(\R,\R_+)$
[where $C(\R,\R_+)$ is equipped with the topology of uniform
convergence on compact sets].
\end{proposition}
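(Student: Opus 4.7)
The plan is to couple all perturbations to a single underlying Brownian motion, reduce the desired weak convergence of $m_{x,f}$ to an almost-sure convergence of the associated stopping times, and then handle lower and upper semicontinuity separately. Fix a standard Brownian motion $B$ and, for each $(y,g)\in \R\times C(\R,\R_+)$, set
$$
X^{y,g}_t:=y+B_t,\qquad T^{y,g}:=\inf\{t\ge 0:\ t=g(X^{y,g}_t)\},
$$
so that $m_{y,g}=\mathrm{Law}(X^{y,g}_{T^{y,g}})$. Assume $T:=T^{x,f}<\infty$ a.s. (the situation in all our applications). If $(x_n,f_n)\to (x,f)$ and $T_n:=T^{x_n,f_n}\to T$ a.s., then path continuity gives $X^{x_n,f_n}_{T_n}=x_n+B_{T_n}\to x+B_T=X^{x,f}_T$ a.s., whence $m_{x_n,f_n}\to m_{x,f}$ weakly. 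The whole task therefore reduces to the a.s. convergence of the stopping times.

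For the lower semicontinuity $\liminf_n T_n\ge T$ a.s., I would argue by contradiction: extracting a subsequence with $T_{n_k}\to T^*\in[0,\infty)$, path continuity gives $X^{x_{n_k},f_{n_k}}_{T_{n_k}}=x_{n_k}+B_{T_{n_k}}\to x+B_{T^*}=X^{x,f}_{T^*}$. The local uniform convergence $f_{n_k}\to f$, combined with continuity of $f$, then allows one to pass to the limit in the identity $T_{n_k}=f_{n_k}(X^{x_{n_k},f_{n_k}}_{T_{n_k}})$, yielding $T^*=f(X^{x,f}_{T^*})$. By the minimality of $T$, $T^*\ge T$. This step is essentially routine.

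The heart of the argument is the upper semicontinuity $\limsup_n T_n\le T$ a.s., where one must rule out the possibility that $g(t):=t-f(X^{x,f}_t)$ touches $0$ tangentially from below at $t=T$ (a scenario in which a small perturbation of $f$ could displace the first zero of $g_n$ arbitrarily far). The key input will be the strong Markov property applied at $T$: the shifted process $s\mapsto B_{T+s}-B_T$ is a standard Brownian motion started at $0$ and therefore, almost surely, returns to $0$ along a sequence of instants $s_k\searrow 0$. At each $t_k:=T+s_k$ one has $X^{x,f}_{t_k}=X^{x,f}_T$, hence
$$
g(t_k)=t_k-f(X^{x,f}_{t_k})=t_k-f(X^{x,f}_T)=s_k>0.
$$
By local uniform convergence of $f_n$ and continuity of $x_n+B_\cdot$, the quantities $g_n(t_k):=t_k-f_n(x_n+B_{t_k})$ converge to $g(t_k)=s_k>0$, so $g_n(t_k)>0$ for $n$ large. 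Combined with $g_n(0)=-f_n(x_n)\le 0$ and continuity of $g_n$ on $[0,t_k]$, this forces $T_n\le t_k$, and letting $k\to\infty$ yields $\limsup_n T_n\le T$.

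The main obstacle is precisely this upper-semicontinuity step: a priori there is no reason why $g$ should be strictly positive just after $T$, and a deterministic argument would fail for tangential touches. The probabilistic rescue is the recurrence of the one-dimensional shifted Brownian motion at $0$, which provides infinitely many strict crossings of the stopping curve in every right neighborhood of $T$ and thus makes the stopping time robust to small perturbations of $(x,f)$. Once this is in place, the remaining manipulations (passage to the weak limit, checking that the subsequence extracted in the lower-semicontinuity step is bounded, etc.) are entirely standard.
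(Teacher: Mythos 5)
Your proof is correct, and it takes a genuinely different route from the one in the paper while relying on the same underlying probabilistic fact. The paper works quantitatively: it fixes $\varepsilon>0$, invokes a preparatory lemma (Lemma~\ref{l:bm-cross}, that a Brownian path crosses a thin rectangle $[-\delta,\delta]\times[\delta,\varepsilon]$ while staying in a slim strip with probability $\ge 1-\varepsilon$), couples the two processes $X^1_t=x_1+B_t$ and $X^2_t=x_2+B_t$, and then applies the strong Markov property at $T=\min(T_1,T_2)$ to directly bound $\bbP(|X^1_{T_1}-X^2_{T_2}|>\varepsilon+\delta)\le 1-(1-\varepsilon)^2$; this produces an explicit modulus of continuity $d(m_{x_1,f_1},m_{x_2,f_2})\le 2\varepsilon$, whose uniformity in $x_1$ on compacts is then cited later (Lemma~\ref{l:conv}). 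You instead reduce the whole statement to a.s.\ convergence $T_n\to T$ of the coupled stopping times, proving a $\liminf$ and a $\limsup$ bound separately; the key step (upper semicontinuity) uses the strong Markov property at $T$ together with the perfectness of the zero set of Brownian motion (so that $B_{T+s_k}=B_T$ along $s_k\searrow 0$), which plays exactly the role of the paper's Lemma~\ref{l:bm-cross} in ruling out tangential touches. Your version is cleaner and arguably more transparent, and it is complete (the infimum defining $T$ is attained since $\{t:\,t=f(X_t)\}$ is closed, $T$ is a hitting time and hence a stopping time, and the zero set of a Brownian motion has no isolated points, so the $s_k$ exist a.s.). What it trades away is the explicit, uniform-in-$x$ quantitative bound that the paper extracts from its coupling and relies on afterward; you would recover the uniformity on compacts by a routine compactness/subsequence argument, but less directly than in the paper.
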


The following lemma is an easy exercise.

%
%
\begin{lemma}\label{l:bm-cross}
Denote by $(X_t,t\ge0)$ the standard Brownian motion. For all
$\varepsilon>0$, there exists $\delta>0$ such that, with probability at
least $1-\varepsilon$, there exist $t_+,t_-\in[\delta, \varepsilon]$
such that $X_{t_+}=\delta$, $X_{t_-} = -\delta$ and $\sup_{0\le t\le
\max(t_+,t_-)} |X_t|\le\varepsilon$. In other words, the Brownian
motion crosses horizontally the rectangle $[-\delta,\delta]\times
[\delta,\varepsilon]$, and before this crossing, it stays inside the
strip $[-\varepsilon,\varepsilon]\times\R_+$ (see Figure~\ref
{fig:3}).
\end{lemma}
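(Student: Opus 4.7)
The plan is to introduce an intermediate time scale $\eta\in(0,\varepsilon]$ with $\eta\ll\varepsilon^2$, take $\delta\ll\eta$, and split the desired event into three pieces. Set
\begin{align*}
A &:=\Bigl\{\sup_{0\le t\le \eta}|X_t|\le\varepsilon\Bigr\}, \\
B^{\pm}&:=\bigl\{\exists\, t\in[\delta,\eta]:\ X_t=\pm\delta\bigr\}.
\end{align*}
On $A\cap B^+\cap B^-$, take $t_\pm$ to be any witness of $B^{\pm}$ (say, the first hitting time of $\pm\delta$ after time $\delta$). Since then $\max(t_+,t_-)\le\eta\le\varepsilon$, both conclusions of the lemma follow at once. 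I will show that each of the three complements has probability less than $\varepsilon/3$.

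For $P(A^c)$, the reflection principle gives
\[
P\bigl(\sup_{[0,\eta]}|X_t|>\varepsilon\bigr)\le 4\,P(X_{\eta}>\varepsilon)=4\,P\bigl(\mN(0,1)>\varepsilon/\sqrt{\eta}\bigr).
\]
For $\varepsilon$ fixed, this tends to $0$ as $\eta\to 0$, so I can fix $\eta\in(0,\varepsilon]$ small enough (of order $\varepsilon^2$ or smaller) to make it less than $\varepsilon/3$.

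For $P((B^+)^c)$, I condition on $X_\delta$ and invoke the strong Markov property. Applying the reflection principle to the Brownian motion restarted at $X_\delta$,
\[
P\bigl(B^+\mid X_\delta=y\bigr)=\min\!\bigl(1,\ 2\,P(\mN(0,\eta-\delta)\ge|y-\delta|)\bigr).
\]
As $\delta\to 0$ with $\eta$ fixed, $X_\delta\to 0$ in probability, so $|X_\delta-\delta|/\sqrt{\eta-\delta}\to 0$ in probability and the right-hand side tends to $1$. Dominated convergence yields $P(B^+)\to 1$, and symmetry handles $B^-$. Choosing $\delta$ small enough so that $P((B^\pm)^c)<\varepsilon/3$ and combining with the bound on $A$ gives $P(A\cap B^+\cap B^-)\ge 1-\varepsilon$.

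The main subtlety — and presumably the reason this is stated separately — is that the bound \textquotedblleft$\le\varepsilon$\textquotedblright\ on the Brownian supremum cannot be imposed on the full interval $[0,\varepsilon]$ (for small $\varepsilon$, the Brownian range on $[0,\varepsilon]$ typically is of order $\sqrt{\varepsilon}\gg\varepsilon$), but only on the shorter interval $[0,\max(t_+,t_-)]$. The two-scale choice $\delta\ll\eta\ll\varepsilon^2$ is what decouples the supremum bound from the time window and makes the three events simultaneously likely.
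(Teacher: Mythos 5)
The paper gives no proof of this lemma (it is dismissed as ``an easy exercise''), so there is nothing to compare against; your argument stands on its own and is correct. The two-scale decomposition $\delta\ll\eta\ll\varepsilon$---bounding $\sup_{[0,\eta]}|X|$ via reflection, then forcing hits of $\pm\delta$ within $[\delta,\eta]$ via the Markov property at time $\delta$ plus another reflection, then applying a union bound---does exactly what is needed, and your closing remark correctly identifies why the supremum must be taken over $[0,\max(t_+,t_-)]$ rather than $[0,\varepsilon]$. One small imprecision worth noting: the parenthetical ``$\eta$ of order $\varepsilon^2$ or smaller'' is not quite right, since $\eta\asymp\varepsilon^2$ makes $4\,P(\mathcal{N}(0,1)>\varepsilon/\sqrt{\eta})$ a fixed constant rather than something $<\varepsilon/3$ uniformly in $\varepsilon$; one genuinely needs $\eta=o(\varepsilon^2)$, e.g.\ $\eta\asymp\varepsilon^2/\log(1/\varepsilon)$. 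This only concerns the heuristic aside---your actual step ``fix $\eta$ small enough that the bound is below $\varepsilon/3$'' is correct as written.
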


\begin{pf*}{Proof of Proposition \ref{p:intersection}}
Let $f_1\in C(\R,\R_+)$ and $x_1\in\R$ be given. Take an arbitrary
$\varepsilon>0$ and let $\delta>0$ be defined by Lemma \ref
{l:bm-cross}. It is easy to see that, for some $R>0$, for any initial
point $x\in\mathcal{U}_1(x_1)$ and for any $f$ such that
$|f(x_1)-f_1(x_1)| \le1$, the Brownian motion started at $x$
intersects $f$ before leaving the strip $[-R,R]\times\R_+$ with
probability at least $1-\varepsilon$.

%
\begin{figure}

\includegraphics{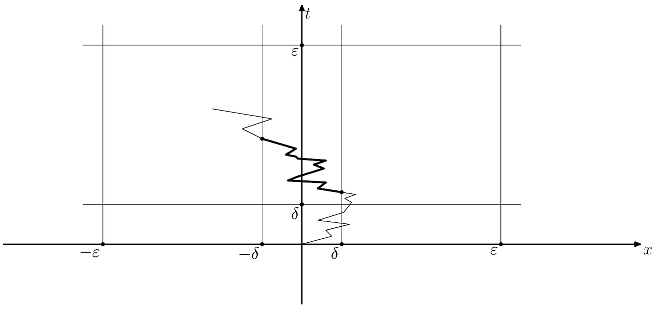}

\caption{A Brownian path crossing the strip.}\label{fig:3}
\end{figure}

Consider now $x_2\in\mathcal{U}_\delta(x_1)$ and $\|f_2-f_1\|
_{C([-R-\delta, R+\delta])} \le\delta$. We will estimate the
difference between $m_{x_1,f_1}$ and $m_{x_2,f_2}$. To do this, take
the trajectory of the same Brownian motion $B_t$ shifted to the initial
points $x_1$ and $x_2$: $X_t^1 = x_1+B_t$ and $X_t^2 = x_2+B_t$.

Consider the moment of the first intersection of these processes with
the corresponding graphs. Let $T_j:= \inf\{t\ge0\dvtx t=f_j(X_t^j)\}$
for $j=1,2$ and $T:=\min(T_1,T_2)$. Note that $T_1$ and $T_2$ are two
Markov hitting times and hence, the conditional behavior of $X_t^j$
under any condition $T=T_0$ and $X_{T_0}^j = \bar{x}_j$ is simply the
Brownian motion shifted to the initial point $(T_0, \bar{x}_j)$. See
Figure \ref{fig:4} below.

%
\begin{figure}[t]

\includegraphics{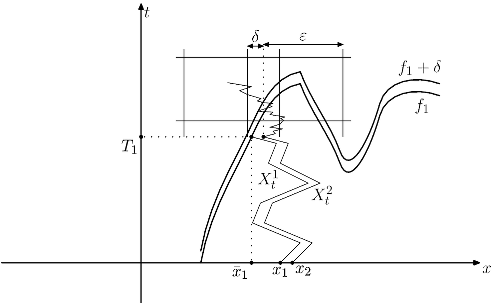}

\caption{First intersection of $X^1,X^2$ with the graphs of $f_1$ and
$f_1+\delta$, respectively.}\label{fig:4}
\end{figure}

Now, let us prove that we have $|X_{T_1}^1 -X_{T_2}^2|\le\varepsilon$
with probability at least $(1-\varepsilon)^2$. To show this, we first
note that, due to the choice of $R$, we have $X_T^j \in\mathcal
{U}_{R+\delta}(x_1)$ with probability at least $1-\varepsilon$. Now,
under any ``first intersection condition'' $T_2\ge T_1=\bar{t}$,
$X_{\bar{t}}^1 = \bar{x}_1\in\mathcal{U}_R(x_1)$, the trajectory of
$X_{\bar{t}}^2$ intersects the graph of $f_2$ inside $\mathcal
{U}_{\varepsilon+\delta}(\bar{x}_1)\times[\bar{t}, \bar
{t}+\varepsilon
]$ with probability at least $1-\varepsilon$. Indeed, under this
condition, the trajectory of $X_{\bar{t}}^2$ is the trajectory of the
Brownian motion started from the point $(\bar{t}, X_{\bar{t}}^2)$.
Meanwhile, we have $|X_{\bar{t}}^2 -X_{\bar{t}}^1| =|x_2-x_1|\le
\delta
$. Also, we have $f_2(\bar{x}_1)\le f_1(\bar{x}_1)+\delta$. Recalling
the definition of $\delta$, we obtain the desired estimate on the
conditional probability.

In the same way, under any condition $T_1\ge T_2=\bar{t}$ and $X_{\bar
{t}}^2 = \bar{x}_2\in\mathcal{U}_R(x_1)$, we have $|X_{T_1}^1
-X_{T_2}^2|\le\varepsilon+ \delta$ with probability at least
$1-\varepsilon$. Considering the first intersection moment, we see
that, with probability at least $(1-\varepsilon)$, the corresponding
point belongs to $\mathcal{U}_R(x_1)$, and conditionally to it we have
$|X_{T_1}^1-X_{T_2}^2| \le\varepsilon+\delta$ with probability at
least $1-\varepsilon$. Hence, we have finally
%
%
\begin{equation}
\label{eq:min-proba} \bbP\bigl(\bigl|X_{T_1}^1 -X_{T_2}^2\bigr|
\le\varepsilon+\delta\bigr) \ge(1 - \varepsilon)^2.
\end{equation}

As $m_{x_1,f_1} =$ Law$(X_{T_1}^1)$ and $m_{x_2,f_2} =$
Law$(X_{T_2}^2)$, \eqref{eq:min-proba} gives us the desired comparison
between these two measures.
\end{pf*}

As it can be easily seen from the latter proof, the continuity in
Proposition \ref{p:intersection} is uniform for $x$ belonging to any
compact set in $\R$.

For further arguments, it will be useful to consider the following
distance between probability measures.
%

\begin{definition}
Let $\mu,\mu'$ be two probability measures. We define the \emph{Prohorov
distance} between them as
$d(\mu, \mu'):= \inf\{\delta>0\dvtx\exists$ random
variables $U,V \dvtx \operatorname{ Law}(U) = \mu, \operatorname{ Law}(V) = \mu'
\mbox{ and }
\bbP
(|U-V|\le\delta) \ge1-\delta\}$.
\end{definition}

%
\begin{remark*}
The Prohorov distance between two probability measures $\mu,\mu'$ is
usually defined as
\[
\bigl|\mu- \mu'\bigr|_P:= \inf\bigl\{\delta>0\dvtx\mu(A) \le
\mu'\bigl(\mathcal{U}_\delta(A)\bigr) + \delta\ \forall A
\in\mathcal{B}(\R)\bigr\},
\]
where $\mathcal{U}_\delta(A)$ is the $\delta$-neighborhood of $A$. But
Strassen's theorem (see \cite{strassen}) proves that these distances
are equivalent.
\end{remark*}

It is easy to see that this distance defines on the space of
probability measures precisely the weak* convergence. In fact, in the
proof of Proposition \ref{p:intersection}, we obtain the estimate
\[
d(m_{x_1,f_1}, m_{x_2,f_2}) \le\max\bigl(1-(1-\varepsilon)^2,
\varepsilon+\delta\bigr) \le2\varepsilon.
\]
Now, let us pass to the first intersection measures starting from
arbitrary initial distributions.

%
\begin{lemma}\label{l:conv}
Let $\mu_0^{(k)}\rightarrow\mu_0$ be a weak* convergent sequence of
measures, and $f_{(k)},f \in C(\R,\R_+)$ be such that
$f_{(k)}\rightarrow f$ uniformly on any compact set. Then $m_{\mu
_0^{(k)}, f_{(k)}} \mathop{\rightarrow}\limits_{k\to\infty} m_{\mu_0,f}$.

If, additionally, the corresponding expectations of the first
intersection times $T_{(k)}$ are uniformly bounded by some constant
$C$, then the expectation of the first intersection time $T$ is also
finite and does not exceed $C$.
\end{lemma}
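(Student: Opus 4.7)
The plan is to reduce everything to Proposition~\ref{p:intersection} via the disintegration identity
$$m_{\mu, f} = \int_\R m_{x,f} \, d\mu(x),$$
valid for any Borel probability measure $\mu$ and any $f \in C(\R, \R_+)$ (obtained by conditioning the first-intersection construction on the initial position $X_0 = x$). For a bounded continuous test function $g \colon \R \to \R$, set $G_f(x) := \int g \, dm_{x,f}$. Proposition~\ref{p:intersection}, together with the uniformity remark following it, ensures that $G_f \in C_b(\R)$ with $\|G_f\|_\infty \le \|g\|_\infty$, and that $G_{f_{(k)}} \to G_f$ \emph{uniformly on compact subsets} of $\R$. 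This uniformity is the only delicate input, and has already been recorded.

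For the first assertion, split
$$\int g \, dm_{\mu_0^{(k)}, f_{(k)}} - \int g \, dm_{\mu_0, f} \;=\; \int (G_{f_{(k)}} - G_f) \, d\mu_0^{(k)} \;+\; \Bigl(\int G_f \, d\mu_0^{(k)} - \int G_f \, d\mu_0\Bigr).$$
The second summand tends to $0$ by weak* convergence of $\mu_0^{(k)}$ and the fact that $G_f \in C_b(\R)$. For the first, Prokhorov's theorem provides, for each $\varepsilon > 0$, a compact $K \subset \R$ with $\mu_0^{(k)}(K^c) < \varepsilon$ for all $k$; on $K$, uniform convergence yields $\sup_K |G_{f_{(k)}}-G_f| < \varepsilon$ eventually, while off $K$ the integrand is crudely bounded by $2\|g\|_\infty$. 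Since $g \in C_b(\R)$ was arbitrary, this establishes the weak* convergence $m_{\mu_0^{(k)}, f_{(k)}} \to m_{\mu_0, f}$.

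For the expectation bound, the trajectory stops exactly on the graph, so $T_{(k)} = f_{(k)}(X_{T_{(k)}})$ and $T = f(X_T)$ almost surely, hence $\E T_{(k)} = \int f_{(k)} \, dm_{\mu_0^{(k)}, f_{(k)}}$ and $\E T = \int f \, dm_{\mu_0, f}$. Since $f$ need not be bounded, truncate: for each fixed $M > 0$, the bounded continuous function $f \wedge M$ is the uniform-on-compacts limit of $f_{(k)} \wedge M$. Applying the same tightness argument to the (weakly convergent, hence tight) family $\{m_{\mu_0^{(k)}, f_{(k)}}\}$ and to the integrands $f_{(k)} \wedge M$, $f \wedge M$ gives
$$\int (f \wedge M) \, dm_{\mu_0, f} \;=\; \lim_{k \to \infty} \int (f_{(k)} \wedge M) \, dm_{\mu_0^{(k)}, f_{(k)}} \;\le\; \liminf_{k\to \infty} \E T_{(k)} \;\le\; C.$$
Letting $M \to \infty$ and using the monotone convergence theorem yields $\E T \le C$ (which in particular forces $T < \infty$ almost surely), completing the proof.
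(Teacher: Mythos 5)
Your argument is correct, and it takes a genuinely different route from the paper's. The paper works entirely in the coupling distance $d$ defined just before the lemma: it couples the two starting positions so that $|X_0^1 - X_0^2|\le\delta$ with probability $\ge 1-\delta$, drives both processes by the \emph{same} Brownian noise, and then invokes Proposition~\ref{p:intersection} (with the uniformity remark) to conclude that the stopped positions and the stopping times are $\varepsilon$-close on a set of probability $\ge 1-3\varepsilon$; the expectation bound then falls out directly from $|T-T_{(k)}|\le\varepsilon$ on that set. You instead avoid any explicit coupling of the processes by exploiting the disintegration $m_{\mu,f}=\int m_{x,f}\,d\mu(x)$, reducing to a fibrewise test-function argument plus Prokhorov tightness, and you handle the expectation via the identity $\E T=\int f\,dm_{\mu_0,f}$ together with the truncation $f\wedge M$ and monotone convergence. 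Both routes rest on exactly the same hard ingredient (Proposition~\ref{p:intersection} and the uniform-on-compacts remark that follows its proof); the paper's version is more quantitative in $d$, while yours is more modular and isolates standard measure-theoretic steps. One small imprecision to fix: to pass from the uniform $d$-closeness of $m_{x,f_{(k)}}$ and $m_{x,f}$ to uniform-on-compacts convergence of $G_{f_{(k)}}$ to $G_f$, the test function $g$ should be taken bounded and \emph{uniformly} continuous (or Lipschitz) rather than merely bounded continuous, since the estimate $|\E g(U)-\E g(V)|\le \omega_g(\delta)+2\|g\|_\infty\delta$ requires a modulus of continuity; as weak* convergence of probability measures on $\R$ is already detected by such $g$, this is a harmless adjustment.
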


\begin{pf}
Indeed, for any $\varepsilon>0$, there exist $\ell_1,\ell_2, \delta
>0, \delta\le\varepsilon$ such that:
\begin{longlist}[(ii)]
\item[(i)]$\mu_0(-\ell_1,\ell_1) \ge1-\varepsilon$,
\item[(ii)] if $|x|\le\ell_1$, $|y-x| \le\delta$ and $\|f- \tilde{f}\|
_{C([-\ell_1 -\ell_2, \ell_1+\ell_2])} \le\delta$, then we have
$d(m_{x,f}, m_{y,\tilde{f}})\le\varepsilon$.
\end{longlist}
(The second conclusion comes from the uniform version of
Proposition \ref{p:intersection}.)

For any $k$ large enough, we have $d(\mu_0, \mu_0^{(k)}) <\delta$.
Hence, for any such $k$, we can choose the processes $X^1, X^2$ such
that Law$(X_0^1)=\mu_0$, Law$(X_0^2) = \mu_0^{(k)}$, $\mathrm
{d}X_t^1 =
\mathrm{d}X_t^2 = \mathrm{d}B_t$ and $\bbP(|X_0^1-X_0^2| \le\delta)
\ge1-\delta$. Then, with probability at least $1-\delta-\varepsilon$,
we have
%
%
\begin{equation}
\label{eq:maj2} \bigl|X_0^1\bigr| \le\ell_1 \quad\mbox{and}\quad
\bigl|X_0^1 -X_0^2\bigr|\le\delta.
\end{equation}
Due to the property (ii), the conditional probability of $|X_{T}^1 -
X_{T_k}^2|\le\varepsilon$ is at least $1-\varepsilon$ under the
condition \eqref{eq:maj2}, where $T = \inf\{t\ge0\dvtx t=f(X_t^1)\}$
and $T_{(k)} = \inf\{t\ge0\dvtx t=f_{(k)}(X_t^2)\}$ are first
intersection stopping times.

Hence, with probability at least $1-\delta-2\varepsilon$, we have
$|X_{T}^1 - X_{T_2}^2|\le\varepsilon$, and hence
\[
d(m_{\mu_0,f}, m_{\mu_0^{(k)},f_{(k)}}) \le\delta+2\varepsilon\le
3\varepsilon.
\]
As $\varepsilon$ is arbitrarily chosen, we have $m_{\mu
_0^{(k)},f_{(k)}} \mathop{\rightarrow}\limits_{k\to\infty} m_{\mu_0,f}$.

Now, let us prove the second statement of the lemma. Actually, for any
$k$ large enough, and any realization as before, we have
$|T-T_{(k)}|\le\varepsilon$ with probability at least $1-\delta
-2\varepsilon\ge1-3\varepsilon$. Thus, we have obtained a lower bound
for the integral of $T$ over a set of probability $1-3\varepsilon$,
which is $\E T_{(k)} +\varepsilon\le C+\varepsilon$. As $\varepsilon
>0$ is arbitrary, this implies that $\E T \le C$.
\end{pf}

We can now conclude the proof of Theorem \ref{thm:BT-line}.

\begin{pf*}{Proof of Theorem \ref{thm:BT-line}}
We have now constructed continuous finite expectation Brownian
transports from $\mu_0^{R_k}$ to $\mu_1^{R_k}$ with stopping functions
$f_{R_k}$ converging uniformly on compact sets to some continuous
function $f$. Then, due to the first part of Lemma \ref{l:conv}, we
have
\[
m_{\mu_0,f} = \lim_{k\to\infty} m_{\mu_0^{R_k}, f_{R_k}} = \lim
_{k\to
\infty} \mu_1^{R_k} = \mu_1.
\]
The expectations of the corresponding passage times $T_{(k)}$ are also
equal to
\[
\E T_{(k)} = \Var\mu_1^{R_k} - \Var
\mu_0^{R_k}
\]
and thus, due to the choice of $\mu_0^{R_k}, \mu_1^{R_k}$, the latter
difference converges to $\Var\mu_1 - \Var\mu_0<\infty$. Hence, these
expectations are uniformly bounded and due to the second part of
Lemma \ref{l:conv}, we have $\E T<\infty$. We have finally constructed
a continuous finite expectation Brownian transport from $\mu_0$ to
$\mu_1$.
\end{pf*}

\subsection{Finite expectation Brownian transport on an interval:
Discretization}\label{s:B-transport-interval}

\subsubsection{Discretization}\label{ss:discretization}

We are now going prove Theorem \ref{thm:BT-interval}. As we have
already mentioned, we will do it by means of a discretization
procedure, replacing the Brownian motion by a discrete random walk, and
then passing to the limit as the mesh of the lattice goes to zero.

We will first study a discretized version of our problem. Namely,
instead of a Brownian motion on $\R$, we consider a random walk on $\Z
$:
\[
Y_{t+1} = %
\cases{ Y_t +1, &\quad $\mbox{with
probability } 1/2$,\vspace*{2pt}
\cr
Y_t -1, & \quad $\mbox{with
probability } 1/2$.} %
\]
We have to modify the setting of a continuous finite expectation
Brownian transport in the following way. The stopping time $T$ is now a
probabilistic Markov moment, that is related to the new function $g$ in
the following way:
%
%
\begin{equation}
\label{e:def-g} %
\cases{ \mbox{if } t >g(Y_t), \mbox{ then
the process is stopped},\vspace*{2pt}
\cr
\mbox{if } t=g(Y_t), \mbox{
then the process is stopped with probability $q(Y_t)$},} %
\end{equation}
where $q\dvtx \Z\rightarrow[0,1]$ is a new auxiliary function. A finite
expectation Brownian transport in this setting will be called a \textit{discrete Brownian transport}.

The new discrete functions corresponding to $\fp$ are defined as
\[
\fp_\mu^\Z(x) = \sum_{y<x}
\sum_{z\le y} \mu(z) = \sum
_{z<x} (x-z)\mu(z),
\]
and $\fp_{\mu_0\rightarrow\mu_1}^{\Z} (x):= \fp_{\mu_1}^{\Z}
(x) - \fp
_{\mu_0}^{\Z} (x)$. It is then easy to check that for a centered
measure $\mu$ on $\Z$ and for an integer $x$, one has $\fp_\mu(x) =
\fp
_\mu^{\Z}(x)$. So, we will in further mostly omit the upper index
``$\Z
$.'' The discrete function $\fp$ works in the same way as its
continuous analogue: an easy computation shows that
\[
\fp_{\delta_0 \rightarrow({1}/{2})(\delta_{-1} + \delta_1)} (x)
= \tfrac{1}{2} \delta_0(x).
\]
Hence, we have for any displacement defined by \eqref{e:def-g}
%
%
\begin{equation}
\label{eq:rec} \fp_{\nu_t \rightarrow\nu_{t+1}} (x) = \frac{1}{2}
\cdot%
\cases{ \nu_t(x), & \quad $\mbox{if } g(x) >t$,\vspace*{2pt}
\cr
0, & \quad $
\mbox{if } g(x)<t$,\vspace*{2pt}
\cr
\nu_t(x) \cdot q(x), &\quad  $\mbox{if } g(x)=t$.} %
\end{equation}

This allows us, for two centered measures $\mu_0,\mu_1$, to define
recursively the transport process in the following way:
\begin{longlist}[(ii)]
\item[(i)] Initial state: $K_{-1}=\varnothing$.
\item[(ii)] Evolution: for any $t\ge0$, any $x\in\Z\setminus K_{t-1}$, if
$\fp_{\nu_t \rightarrow\mu_1} (x) > \frac{1}{2} \nu_t(x)$, where
$\nu
_t$ is the occupation measure at time $t$, there is nothing to be done.
Otherwise, take $g(x):= t$ with $q(x) = 2\frac{\fp_{\nu_t
\rightarrow
\mu_1}(x)}{\nu_t(x)}$ [and 0 if $\fp_{\nu_t \rightarrow\mu
_1}(x)=\nu_t(x)=0$].
\end{longlist}
Due to \eqref{eq:rec}, we then have
\[
\fp_{\nu_{t+1} \rightarrow\mu_1}(x) = \fp_{\nu_t \rightarrow\mu
_1}(x) - \min\bigl(\tfrac{1}{2}
\nu_t(x), \fp_{\nu_t \rightarrow\mu_1}(x) \bigr).
\]
In particular, we can easily see by induction that all the functions
$\fp_t:= \fp_{\nu_t \rightarrow\mu_1}$ are nonnegative, and the
procedure is thus well defined for all $t$. Also, the latter
construction implies the following:
\begin{longlist}[(iii)]
\item[(i)] if at some time $t$, at cell $x$, we have $\fp_{\nu_t
\rightarrow
\mu_1}(x) =0$, then the cell $(t,x)$ is frozen and any particle coming
to it at this moment (or afterward) is stopped,
\item[(ii)] if $\fp_{\nu_t \rightarrow\mu_1}(x) \ge\frac{1}{2}\nu_t (x)$,
then the cell $(t,x)$ is fully diffused,
\item[(iii)] if $0<\fp_{\nu_t \rightarrow\mu_1}(x)< \frac{1}{2} \nu_t(x)$,
then the cell $(t,x)$ is ``partially frozen,'' meaning that a part of
the particles of total measure $2\fp_{\nu_t \rightarrow\mu_1}(x)$ is
diffused, whereas the others are frozen. In this case, $\fp_{\nu_{t+1}
\rightarrow\mu_1}(x) =0$, so that, starting from the moment $t+1$, the
cell $x$ becomes fully frozen.
\end{longlist}

We have the following.
%

\begin{proposition}\label{p:discret}
Let $\mu_0,\mu_1$ be two centered measures on $\Z$, both with finite
support. Suppose that $\mu_1$ is everywhere positive on the interval
$I:= [\min\operatorname{ Supp} (\mu_0), \max\operatorname{ Supp} (\mu_0)]$ and
$\fp_{\mu_0 \rightarrow\mu_1}\ge0$. Then the procedure \eqref
{e:def-g} provides us with everywhere defined functions $g,q$ that
define a discrete bounded Brownian transport from $\mu_0$ to $\mu_1$.
\end{proposition}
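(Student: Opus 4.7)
My plan splits the proof into three stages: well-definedness of the recursion defining $g$ and $q$, boundedness of $g$ (which gives almost-sure boundedness of the stopping time $T$), and verification that the resulting stopped distribution equals $\mu_1$.

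For the first stage, I would induct on $t\ge 0$ to show that $\fp_t(x):=\fp_{\nu_t\to\mu_1}(x)\ge 0$. The base case is the hypothesis $\fp_0\ge 0$. The inductive step is immediate from the recursion $\fp_{t+1}(x) = \fp_t(x) - \min(\fp_t(x), \tfrac{1}{2}\nu_t(x)) \ge 0$. Moreover, whenever the procedure assigns $g(x):=t$, we are in the case $\fp_t(x)\le \tfrac{1}{2}\nu_t(x)$, so the prescribed $q(x) = 2\fp_t(x)/\nu_t(x)$ lies in $[0,1]$, and the construction is well-defined on all of $\Z$.

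For the second (main) stage, pick $M$ with $\mathrm{Supp}(\mu_0)\cup\mathrm{Supp}(\mu_1)\subset[-M,M]$. Since both measures are centered probability measures, $\fp_{\mu_0}$ and $\fp_{\mu_1}$ both vanish for $x\le -M$ and both equal $x$ for $x>M$, so $\fp_0$ vanishes outside $I$ and every cell there is frozen at $t=0$. The remaining cells are the finitely many elements of $I\cap\Z$, and I would show each is frozen in uniformly bounded time by contradiction. If $x_0\in I\cap\Z$ is never frozen, then telescoping the case-one recursion yields $\fp_\infty(x_0) = \fp_0(x_0) - \tfrac{1}{2}\sum_{t\ge 0}\nu_t(x_0) > 0$. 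On the other hand, since the simple random walk killed on the nonempty absorbing set $F_0\supset\Z\setminus I$ terminates almost surely, the stopped law $\sigma_\infty:=\lim_t \mathrm{Law}(X_{t\wedge T})$ is well-defined, and passing to the limit in $\fp_t=\fp_{\mu_1}-\fp_{\nu_t}$ gives the parallel expression $\fp_\infty(x_0)=\fp_{\mu_1}(x_0)-\fp_{\sigma_\infty}(x_0)$. Equating the two and using $\fp_0=\fp_{\mu_1}-\fp_{\mu_0}$ yields the identity
\[
\fp_{\sigma_\infty}(x_0) - \fp_{\mu_0}(x_0) = \tfrac{1}{2}\sum_t \nu_t(x_0).
\]
Evaluating this at the leftmost and rightmost elements of $U_\infty:=\{x:\fp_t(x)>0\text{ for all }t\}$, and exploiting that $\sigma_\infty(x_0)=0$ at any never-frozen cell, that $\fp_{\mu_0}$ vanishes at and below $\min\mathrm{Supp}(\mu_0)$ (and its right-sided analogue near $\max\mathrm{Supp}(\mu_0)$), and the hypothesis $\mu_1(x_0)>0$ on $I$, leads to the desired contradiction. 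Hence $U_\infty=\emptyset$; being a decreasing sequence of subsets of the finite set $I\cap\Z$, $(U_t)$ must stabilize at some time $G$, so $g\le G$ and $T\le G+1$ almost surely.

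For the third stage, once $U_\infty=\emptyset$ we have $\fp_\infty\equiv 0$, and combined with $\fp_\infty=\fp_{\mu_1}-\fp_{\nu_\infty}$ this gives $\fp_{\nu_\infty}=\fp_{\mu_1}$. Because $\fp$ uniquely determines a probability measure (through $F_\mu=\partial_x\fp_\mu$), we get $\nu_\infty=\mu_1$, and since $T$ is almost surely bounded, $\nu_t=\mathrm{Law}(X_{t\wedge T})$ converges to the stopped law $\sigma_\infty$, yielding $\sigma_\infty=\mu_1$ as required. The technical heart is the second stage: rigorously extracting the contradiction from the two parallel formulas for $\fp_\infty(x_0)$ demands careful analysis of how $\fp_{\mu_0}$ and $\sigma_\infty$ interact at the extreme cells of $U_\infty$, and this is precisely where the hypothesis that $\mu_1$ is strictly positive on $I$ plays its essential role.
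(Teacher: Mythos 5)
Your three-stage decomposition (well-definedness, boundedness of the stopping time, identification of the terminal law) is the right skeleton, and Stages 1 and 3 are essentially correct and parallel what the paper does. However, Stage 2 — the crux of the proposition — is not actually carried out. You set up two parallel expressions for $\fp_\infty(x_0)$ and announce that equating them ``at the leftmost and rightmost elements of $U_\infty$'' will yield a contradiction through a ``careful analysis'' of how $\fp_{\mu_0}$ and $\sigma_\infty$ interact, but you never produce that contradiction; you defer it to the reader and in fact explicitly label it as ``precisely'' the hard, unresolved technical heart. That is a genuine gap: the proposition is precisely the claim that the proportionality stops after finitely many steps, and this is the one step you leave unproved. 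Two subsidiary issues compound this: the assertion $\fp_\infty(x_0)>0$ does not follow from never-freezing (a decreasing sequence of positives can have limit zero), and the claim ``$\fp_0$ vanishes outside $I$'' requires $\mathrm{Supp}(\mu_1)\subset I$, which is not hypothesized — in general $\fp_0$ can be positive in the gap between $\partial I$ and $\partial\bigl(\text{interval hull of }\mathrm{Supp}(\mu_1)\bigr)$.

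The paper closes the boundedness step by a different and self-contained mechanism, which you should compare with: it introduces the combinatorial inequality of Lemma~\ref{l:equality-discrete}, which says that between two consecutive zeros $x<y$ of $\fp_t$ one has $\nu_t([x+1,y-1])\ge\mu_1([x+1,y-1])$. Summing this over the components of $\{\fp_t>0\}$ gives $m_t:=\nu_t(\{\fp_t>0\})\ge\mu_1(\{\fp_t>0\})\ge b\cdot\#\{\fp_t>0\}$, where $b:=\min_{z\in I}\mu_1(z)>0$. On the other side, $\nu_t$ is dominated by the occupation measure of a walk confined to a fixed finite interval, so $m_t\to 0$. Once $m_t<b$, the positive set of $\fp_t$ is forced to be empty, hence $\fp_t\equiv 0$ and $\nu_t=\mu_1$. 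This is where the positivity hypothesis on $\mu_1$ enters concretely; in your proposal its role is gestured at but never made operative. You do not invoke anything like Lemma~\ref{l:equality-discrete}, and without it your plan would still need a separate argument to turn the identity $\fp_{\sigma_\infty}(x_0)-\fp_{\mu_0}(x_0)=\tfrac12\sum_t\nu_t(x_0)$ into an actual contradiction.
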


To prove this result, we will first need the following lemma, which is
a discrete analogue of Lemma \ref{l:coincidence}.

%
\begin{lemma}\label{l:equality-discrete}
Let $\mu, \nu$ be two centered (discrete) measures of finite support.
Suppose that $\fp_{\nu\rightarrow\mu} \ge0$ and $\fp_{\nu
\rightarrow\mu} (x) = \fp_{\nu\rightarrow\mu} (y) =0$ for some
$x<y$. Then, we have $\mu([x,y]) \ge\nu([x,y]) \ge\nu([x+1,y-1])
\ge\mu([x+1,y-1])$.
\end{lemma}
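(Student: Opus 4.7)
The plan is to reduce everything to the behavior of the discrete derivative of $\fp_{\nu\to\mu}$ at the zeros $x$ and $y$, and then to an elementary combination of cumulative‑distribution inequalities.

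First I would compute the discrete derivative. Using $\fp_\mu^\Z(a)=\sum_{z<a}(a-z)\mu(z)$, a direct calculation gives
\begin{equation*}
\fp_\mu^\Z(a+1)-\fp_\mu^\Z(a)=\mu((-\infty,a]),
\end{equation*}
and hence, subtracting the analogous identity for $\nu$,
\begin{equation*}
\fp_{\nu\to\mu}(a+1)-\fp_{\nu\to\mu}(a)=(\mu-\nu)((-\infty,a]).
\end{equation*}
This is the discrete analogue of the identity $\partial_a\fp_{\nu\to\mu}(a)=(F_\mu-F_\nu)(a)$ used in the continuous Lemma~\ref{l:coincidence}.

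Next, I would exploit the hypothesis that $\fp_{\nu\to\mu}\ge 0$ attains the value $0$ at $x$ and at $y$. Because $\fp_{\nu\to\mu}(x-1)\ge 0=\fp_{\nu\to\mu}(x)$ and $\fp_{\nu\to\mu}(x+1)\ge 0=\fp_{\nu\to\mu}(x)$, the discrete derivative changes from non‑positive to non‑negative at $x$, so
\begin{equation*}
(\mu-\nu)((-\infty,x-1])\le 0\le (\mu-\nu)((-\infty,x]),
\end{equation*}
and the same reasoning at $y$ gives
\begin{equation*}
(\mu-\nu)((-\infty,y-1])\le 0\le (\mu-\nu)((-\infty,y]).
\end{equation*}

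Finally I would combine these four inequalities. Writing
\begin{equation*}
\mu([x,y])-\nu([x,y])=(\mu-\nu)((-\infty,y])-(\mu-\nu)((-\infty,x-1]),
\end{equation*}
the right-hand side is a difference of a non-negative and a non-positive quantity, giving $\mu([x,y])\ge\nu([x,y])$. The middle inequality $\nu([x,y])\ge\nu([x+1,y-1])$ is immediate because $\nu\ge 0$ and $[x+1,y-1]\subset[x,y]$. For the last one, write
\begin{equation*}
\nu([x+1,y-1])-\mu([x+1,y-1])=(\nu-\mu)((-\infty,y-1])+(\mu-\nu)((-\infty,x]),
\end{equation*}
which is a sum of two non-negative terms by the inequalities above, yielding $\nu([x+1,y-1])\ge\mu([x+1,y-1])$.

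The only real care needed is bookkeeping of endpoints: the discrete derivative at $a$ produces the cumulative mass up to $a$ (not $a-1$), so the local-minimum condition at $x$ feeds back information about $(-\infty,x-1]$ and $(-\infty,x]$, and the proof must line these up correctly with the interval $[x,y]$ versus $[x+1,y-1]$. I do not expect any substantive obstacle beyond this—the statement is a discrete analogue of Lemma~\ref{l:coincidence}, but because the lattice gradient is a \emph{one-sided} difference, the natural inequalities land on two different endpoint‑shifted intervals, which is exactly the chain asserted in the lemma.
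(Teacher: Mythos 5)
Your proof is correct and in substance matches the paper's: both arguments rest on the same telescoping identities for the discrete differences of $\fp_{\nu\to\mu}$ and on the non-negativity of $\fp_{\nu\to\mu}$ at the lattice points adjacent to the zeros $x,y$. You phrase this via the first difference $\fp_{\nu\to\mu}(a+1)-\fp_{\nu\to\mu}(a)=(\mu-\nu)((-\infty,a])$ and sign information at the minima, whereas the paper telescopes the second difference $\mu(z)-\nu(z)$ over $[x+1,y-1]$ and $[x,y]$ and reads off $\fp_{\nu\to\mu}(x\pm1),\fp_{\nu\to\mu}(y\pm1)\ge 0$ directly; these are the same facts repackaged, and your version is perhaps a touch more parallel to the continuous Lemma~\ref{l:coincidence}.
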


\begin{pf}
Note that $\nu(z) = (\fp_\nu(z+1)-\fp_\nu(z) ) - (\fp
_\nu(z)-\fp_\nu(z-1) )$. Taking the difference between such
representations for $\mu(z)$ and $\nu(z)$, and summing up on $z\in
[x+1,y-1]$, we have
\begin{eqnarray*}
&&\sum_{z\in[x+1,y-1]} \bigl(\mu(z) - \nu(z) \bigr) \\
&&\qquad= \bigl(
\fp_{\nu
\rightarrow\mu} (y)-\fp_{\nu\rightarrow\mu}(y-1) \bigr) - \bigl
(\fp
_{\nu\rightarrow\mu} (x+1)-\fp_{\nu\rightarrow\mu}(x) \bigr)
\\
&&\qquad= - \fp_{\nu\rightarrow\mu}(y-1) - \fp_{\nu\rightarrow\mu}(x+1).
\end{eqnarray*}
Hence, we get
\[
\nu\bigl([x+1,y-1]\bigr)- \mu\bigl([x+1,y-1]\bigr) = \fp_{\nu
\rightarrow\mu}(y-1) +
\fp_{\nu\rightarrow\mu}(x+1).
\]
On the other hand, summing on $z\in[x,y]$, we have
\begin{eqnarray*}
&&\sum_{z\in[x,y]} \bigl(\mu(z) - \nu(z) \bigr)\\
&&\qquad= \bigl(
\fp_{\nu
\rightarrow\mu} (y+1)-\fp_{\nu\rightarrow\mu}(y) \bigr) - \bigl
(\fp
_{\nu\rightarrow\mu} (x)-\fp_{\nu\rightarrow\mu}(x-1) \bigr)
\\
&&\qquad= \fp_{\nu\rightarrow\mu}(y+1) + \fp_{\nu\rightarrow\mu
}(x-1)\ge0.
\end{eqnarray*}
Thus, we conclude that
\[
\mu\bigl([x,y]\bigr) \ge\nu\bigl([x,y]\bigr) \ge\nu\bigl
([x+1,y-1]\bigr) \ge\mu
\bigl([x+1,y-1]\bigr).
\]
\upqed\end{pf}

\begin{pf*}{Proof of Proposition \ref{p:discret}}
Consider the value $m_t:= \nu_t (\{x\dvtx\fp_{\nu_t \rightarrow
\mu_1}
(x) >0\})$. On one hand, the sequence $(m_t)$ converges to 0. Indeed,
$\nu_t$ is a part of the occupation measure of a random walk on $\Z$
with the initial distribution $\mu_0$, that is in particular
conditioned to never exit the interval $I:= \operatorname{ Supp} (\mu_1)$. The
probability of staying inside $I$ during $t$ steps converges to 0, and
thus, so does $m_t$. On the other hand, Lemma \ref{l:equality-discrete}
implies that
\[
\nu_t\bigl(\bigl\{x\dvtx\fp_{\nu_t \rightarrow\mu_1}(x)>0\bigr
\}\bigr) \ge
\mu_1 \bigl(\bigl\{x\dvtx\fp_{\nu_t \rightarrow\mu_1}(x)>0\bigr
\}\bigr)
\]
and thus
\[
m_t \ge\sharp\bigl\{x\dvtx\fp_{\nu_t \rightarrow\mu
_1}(x)>0\bigr\}\cdot\min
_{z\in I} \mu_1(z).
\]
As $b:= \min_{z\in I} \mu_1(z) >0$ due to the hypothesis of the
proposition, once $m_t <b$, we have $\fp_t \equiv0$ and hence $\nu_t =
\mu_1$.
\end{pf*}

\subsubsection{Proof of Theorem \texorpdfstring{\protect\ref{thm:BT-interval}}{2.4}}

We are now ready to prove Theorem \ref{thm:BT-interval}. Let two
centered measures $\mu_0$ and $\mu_1$, supported on some interval
$I\subset\R$, be given and assume that, for these measures, the
hypotheses (i)--(v) of the theorem are satisfied. Up to a rescaling of
space and time, we can assume that $I=[-1,1]$.

For any natural $n$, one can consider the discretized measures $\mu
_{0}^{(n)}$ and $\mu_{1}^{(n)}$ on $\frac{1}{n}\Z$, defined as
%
%
\begin{equation}
\label{eq:defi-disc} \mu_i^{(n)} \biggl(\frac{k}{n}
\biggr) = n \int_{{(k-1)}/{n}}^{
{(k+1)}/{n}} \biggl( 1- \biggl\llvert
x-\frac{k}{n} \biggr\rrvert\biggr) \,\mathrm{d}\mu_i (x),\qquad
i=0,1.
\end{equation}
Note that the measures $\mu_{0}^{(n)}$ and $\mu_{1}^{(n)}$ are
supported on the sets $\{-1, \frac{-n+1}{n},\break \ldots,  \frac{n-1}{n},
1\}
$, and have the same mean.

Consider now the corresponding random walks (with the elementary time
step~$\frac{1}{n^2}$) and the corresponding functions
\[
\fp_{\mu_i^{(n)}}^{({1}/{n})\Z} \biggl( \frac{k}{n} \biggr) =
\sum
_{y<{k}/{n}, y\in({1}/{n})\Z} \biggl( y-\frac{k}{n} \biggr
) \mu
_i^{(n)}(y),
\]
which, as earlier for $\Z$, are the restrictions on $\frac{1}{n}\Z$ of
the continuous functions $\fp_{\mu_i^{(n)}} (x)$. A first step in
applying the discretization technique\vspace*{-1.5pt} is a check that there exists a
discrete finite expectation Brownian transport from $\mu_{0}^{(n)}$ to
$\mu_{1}^{(n)}$.

%
\begin{lemma}
For any $n$ large enough, the measures $\mu_{0}^{(n)}$ and $\mu
_{1}^{(n)}$ satisfy the hypotheses of Proposition \ref{p:discret}.
\end{lemma}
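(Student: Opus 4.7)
The plan is to verify, for any $n$ (the ``large enough'' in the statement is just conservative), that $\mu_0^{(n)}$ and $\mu_1^{(n)}$ satisfy the three hypotheses of Proposition~\ref{p:discret} (after the obvious rescaling of the lattice $\Z$ to $\frac{1}{n}\Z$): namely (a) they are centered measures of finite support, (b) $\mu_1^{(n)}$ is positive on the smallest interval containing $\mathrm{Supp}(\mu_0^{(n)})$, and (c) $\fp^{\frac{1}{n}\Z}_{\mu_0^{(n)} \rightarrow \mu_1^{(n)}} \ge 0$. The basic tool will be the partition-of-unity property of the hat functions $\phi_k(x) := \bigl(1 - n\bigl|x - k/n\bigr|\bigr)_+$, in terms of which~\eqref{eq:defi-disc} reads simply $\mu_i^{(n)}(k/n) = \int \phi_k(x)\,\mathrm{d}\mu_i(x)$.

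For (a), the finite support is clear from $\mathrm{Supp}(\mu_i)\subset I = [-1,1]$. For centeredness, I would interchange sum and integral using the pointwise identity $\sum_k (k/n)\phi_k(x) = x$, easily verified on each interval $[\ell/n,(\ell+1)/n]$ from $\phi_\ell+\phi_{\ell+1}\equiv 1$ and $\phi_{\ell+1}(x)=nx-\ell$; this yields $\sum_k (k/n)\mu_i^{(n)}(k/n) = \int x\,\mathrm{d}\mu_i(x) = 0$. For (b), the density lower bound $\rho_{\mu_0}\ge a$ (hypothesis ii) gives $\mu_0^{(n)}(k/n)>0$ for every $k\in\{-n,\ldots,n\}$, so the relevant discrete interval equals $\{-1,-(n-1)/n,\ldots,1\}$; and hypothesis iii), applied to a sub-interval $J_k\subset I$ of length at least $1/(2n)$ near $k/n$ on which $\phi_k\ge 1/2$, gives $\mu_1^{(n)}(k/n) \ge \tfrac{n}{2}\,\mu_1(J_k) \ge \tfrac{n}{2}\,e^{-2\alpha n}>0$.

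The heart of the proof is (c), which I would derive from the exact pointwise identity
\[
\fp^{\frac{1}{n}\Z}_{\mu_i^{(n)}}(k/n) \;=\; \fp_{\mu_i}(k/n)\qquad\text{for every } k\in\Z,
\]
so that hypothesis iv) gives immediately $\fp^{\frac{1}{n}\Z}_{\mu_0^{(n)}\rightarrow\mu_1^{(n)}}(k/n) = \fp_{\mu_0\rightarrow\mu_1}(k/n)\ge 0$. To prove this identity, I would expand
\[
\fp^{\frac{1}{n}\Z}_{\mu_i^{(n)}}(k/n) = \sum_{j<k}(k/n - j/n)\mu_i^{(n)}(j/n) = \int \psi_k(x)\,\mathrm{d}\mu_i(x),\qquad \psi_k(x) := \sum_{j<k}(k/n - j/n)\phi_j(x),
\]
and show pointwise that $\psi_k(x) = (k/n - x)_+$ by a short case analysis on which interval $[\ell/n,(\ell+1)/n]$ contains $x$: if $\ell\ge k$, then no $j<k$ contributes and $\psi_k(x)=0$; otherwise at most $j=\ell$ and $j=\ell+1$ contribute, and a direct computation using $\phi_\ell+\phi_{\ell+1}\equiv 1$ together with $\phi_{\ell+1}(x)=nx-\ell$ gives $\psi_k(x)=k/n - x$. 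In both cases this matches $(k/n-x)_+$, and substituting back yields $\fp^{\frac{1}{n}\Z}_{\mu_i^{(n)}}(k/n) = \int (k/n-x)_+\,\mathrm{d}\mu_i(x) = \fp_{\mu_i}(k/n)$.

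The only real obstacle is the case-wise verification of $\psi_k(x)=(k/n - x)_+$, but this is a routine computation reflecting only the partition-of-unity structure of the hat functions; no further analytic input beyond the hypotheses of Theorem~\ref{thm:BT-interval} is required.
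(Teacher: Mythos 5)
Your proof is correct, and it takes a genuinely different route from the paper's. Your crux is the exact identity $\fp^{\frac{1}{n}\Z}_{\mu_i^{(n)}}(k/n) = \fp_{\mu_i}(k/n)$, proved by verifying $\sum_{j<k}(k/n - j/n)\,\phi_j(x) = (k/n - x)_+$ through the partition-of-unity structure of the hat functions: the discretization~\eqref{eq:defi-disc} thus reproduces $\fp$ at lattice points with no error, and hypothesis~iv) alone yields $\fp^{\frac{1}{n}\Z}_{\mu_0^{(n)}\to\mu_1^{(n)}}(k/n) = \fp_{\mu_0\to\mu_1}(k/n)\ge 0$ for every $k$ and every $n$. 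The paper does not exploit this exactness (it only remarks that the discrete $\fp$ of $\mu_i^{(n)}$ agrees with the continuous $\fp$ of the same discrete measure). Instead it argues by a two-step approximation: the functions $\fp_{\mu_0^{(n)}\to\mu_1^{(n)}}$ converge uniformly to $\fp_{\mu_0\to\mu_1}$, which is bounded away from zero on any $I\setminus\mathcal{U}_\delta(\partial I)$, giving positivity there for $n$ large; and near $\partial I$, hypothesis~v) (that $\rho_{\mu_0}>\rho_{\mu_1}$ in a boundary collar) is invoked to get $\mu_1^{(n)}(x)>\mu_0^{(n)}(x)$ for $n$ large, which forces $\fp_{\mu_0^{(n)}\to\mu_1^{(n)}}\ge 0$ in the boundary layer. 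Your argument bypasses both the limit and hypothesis~v) entirely, and, as you point out, proves the lemma for all $n$ rather than only $n$ large. What the paper's softer argument buys in exchange is robustness to the discretization scheme (it would survive a mollifier that only approximately, rather than exactly, preserves $\fp$ at lattice points); yours is tighter and identifies the structural reason the piecewise-linear averaging in~\eqref{eq:defi-disc} was chosen in the first place.
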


\begin{pf}
Note that the functions $\fp_{\mu_{0}^{(n)} \to\mu_{1}^{(n)}}$
converge uniformly to the function $\fp_{\mu_0 \to\mu_1}$ that is
positive inside $I$. Hence,
\[
\forall\delta\ \exists n_0\dvtx\forall n>n_0\qquad
\fp_{\mu
_{0}^{(n)} \to\mu_{1}^{(n)}}|_{I\setminus\mathcal{U}_\delta
(\partial
I)} >0.
\]
On the other hand, due to the assumption (v), we have
\[
\exists n_1\dvtx\forall n\ge n_1, \forall x\in
\mathcal{U}_\varepsilon(\partial I) \cap I\cap\frac{1}{n}\Z
\qquad \mu_1^{(n)}(x) > \mu_0^{(n)}(x),
\]
what assures $\fp_{\mu_{0}^{(n)} \to\mu_{1}^{(n)}}|_{\mathcal
{U}_\varepsilon(\partial I) \cap I\cap({1}/{n})\Z} \ge0$.
Choosing then $\delta= \varepsilon/2$, we see that $\fp_{\mu
_{0}^{(n)} \to\mu_{1}^{(n)}}$ is positive everywhere on $I$ once $n$
is large enough.
\end{pf}

Consider now the corresponding discrete potentials $g_{(n)}(x)$ that we
extend to $[-1,1]$ piecewise linearly. Note that, for these functions,
we still have the (uniform in $n$) estimates, analogous to Lemma \ref
{l:decomp-I} and Proposition \ref{p:intersection} (proven by the same methods).
Hence, the family of functions $g_{(n)}$ is precompact and we can
extract a convergent subsequence $g_{(n_k)} \rightarrow f$. On the
other hand, discrete random walks tend, as $n\to\infty$, to the
Brownian motion. Hence, the same arguments as in Proposition \ref
{p:intersection} and Lemma \ref{l:conv} imply that the first
intersection measure for the initial distribution $\mu_0 = \lim_{k\to
\infty} \mu_0^{(n_k)}$ with the stopping function $f = \lim_{k\to
\infty
} g_{(n_k)}$ will be $\lim_{k\to\infty} \mu_1^{(n_k)} = \mu_1$. This
completes the proof of Theorem \ref{thm:BT-interval}.

\section*{Acknowledgements}
V.~Kleptsyn thanks S. Pirogov, A. Sobolevski, B. Gurevich, A. Rybko and S.
Shlosman for interesting discussions. A.~Kurtzmann thanks Bernard Roynette for
telling her the Cantelli conjecture. Both authors thank I.~Meilijson
and A. Cox for an interesting discussion.

%

%




\printaddresses


\begin{thebibliography}{22}

\bibitem{AS}
%
\begin{barticle}[mr]
\bauthor{\bsnm{Ankirchner},~\bfnm{Stefan}\binits{S.}} \AND
\bauthor{\bsnm{Strack},~\bfnm{Philipp}\binits{P.}}
(\byear{2011}).
\btitle{Skorokhod embeddings in bounded time}.
\bjournal{Stoch. Dyn.}
\bvolume{11}
\bpages{215--226}.
\bid{doi={10.1142/S0219493711003255}, issn={0219-4937}, mr={2836522}}
\end{barticle}
%
\bptok{imsref}%
\endbibitem

\bibitem{BEK}
%
\begin{bmisc}[author]
\bauthor{\bsnm{Bena\"\i{}m},~\bfnm{M.}\binits{M.}}
\and
\bauthor{\bsnm{El Karoui},~\bfnm{N.}\binits{N.}}
(\byear{2004}).
\bhowpublished{\textit{Promenade al\'eatoire}. Ed. Ecole Polytechnique.}
\end{bmisc}
%
\bptok{imsref}%
\endbibitem

\bibitem{C}
%
\begin{barticle}[author]
\bauthor{\bsnm{Cantelli},~\bfnm{F.~P.}\binits{F.~P.}}
(\byear{1918}).
\btitle{Sullo schema lexiano della dispersione ipernormale}.
\bjournal{Memorie Accad. Naz. Lincei}
\bvolume{12}
\bpages{395--411}.
\end{barticle}
%
\bptok{imsref}%
\endbibitem

\bibitem{cha}
%
\begin{barticle}[mr]
\bauthor{\bsnm{Chacon},~\bfnm{R.~V.}\binits{R.~V.}}
(\byear{1977}).
\btitle{Potential processes}.
\bjournal{Trans. Amer. Math. Soc.}
\bvolume{226}
\bpages{39--58}.
\bid{issn={0002-9947}, mr={0501374}}
\end{barticle}
%
\bptok{imsref}%
\endbibitem

\bibitem{CoxP}
%
\begin{bmisc}[author]
\bauthor{\bsnm{Cox},~\bfnm{A.~M.~G.}\binits{A.~M.~G.}}
\and
\bauthor{\bsnm{Peskir},~\bfnm{G.}\binits{G.}}
(\byear{2012}).
\bhowpublished{Embedding laws in diffusions by functions of time. Preprint.}
\end{bmisc}
%
\bptok{imsref}%
\endbibitem

\bibitem{CoxW}
%
\begin{barticle}[mr]
\bauthor{\bsnm{Cox},~\bfnm{Alexander~M.~G.}\binits{A.~M.~G.}} \AND
\bauthor{\bsnm{Wang},~\bfnm{Jiajie}\binits{J.}}
(\byear{2013}).
\btitle{Root's barrier: Construction, optimality and applications to
variance options}.
\bjournal{Ann. Appl. Probab.}
\bvolume{23}
\bpages{859--894}.
\bid{doi={10.1214/12-AAP857}, issn={1050-5164}, mr={3076672}}
\end{barticle}
%
\bptok{imsref}%
\endbibitem

\bibitem{DRVY}
%
\begin{barticle}[mr]
\bauthor{\bparticle{de} \bsnm{Meyer},~\bfnm{Bernard}\binits{B.}},
\bauthor{\bsnm{Roynette},~\bfnm{Bernard}\binits{B.}},
\bauthor{\bsnm{Vallois},~\bfnm{Pierre}\binits{P.}} \AND
\bauthor{\bsnm{Yor},~\bfnm{Marc}\binits{M.}}
(\byear{2002}).
\btitle{On independent times and positions for {B}rownian motions}.
\bjournal{Rev. Mat. Iberoam.}
\bvolume{18}
\bpages{541--586}.
\bid{doi={10.4171/RMI/328}, issn={0213-2230}, mr={1954864}}
\end{barticle}
%
\bptok{imsref}%
\endbibitem

\bibitem{D}
%
\begin{binproceedings}[mr]
\bauthor{\bsnm{Dudley},~\bfnm{R.~M.}\binits{R.~M.}}
(\byear{1974}).
\btitle{The {G}aussian process and how to approach it}.
In \bbooktitle{Proceedings of the {I}nternational {C}ongress of
{M}athematicians ({V}ancouver, {B}{C}, 1974), {V}ol. 2},
\bpages{143--146}.
\bpublisher{Canad. Math. Congress},
\blocation{Montreal, Que}.
\bid{mr={0482972}}
\bptnote{check year}%
\end{binproceedings}
%
\bptok{imsref}%
\endbibitem

\bibitem{L}
%
\begin{barticle}[mr]
\bauthor{\bsnm{Loynes},~\bfnm{R.~M.}\binits{R.~M.}}
(\byear{1970}).
\btitle{Stopping times on {B}rownian motion: {S}ome properties of
{R}oot's construction}.
\bjournal{Z. Wahrsch. Verw. Gebiete}
\bvolume{16}
\bpages{211--218}.
\bid{mr={0292170}}
\end{barticle}
%
\bptok{imsref}%
\endbibitem

\bibitem{LM}
%
\begin{bbook}[mr]
\bauthor{\bsnm{Malliavin},~\bfnm{Paul}\binits{P.}}
(\byear{1995}).
\btitle{Integration and Probability}.
\bseries{Graduate Texts in Mathematics}
\bvolume{157}.
\bpublisher{Springer},
\blocation{New York}.
\bid{doi={10.1007/978-1-4612-4202-4}, mr={1335234}}
\end{bbook}
%
\bptok{imsref}%
\endbibitem

\bibitem{MC}
%
\begin{barticle}[mr]
\bauthor{\bsnm{McConnell},~\bfnm{Terry~R.}\binits{T.~R.}}
(\byear{1991}).
\btitle{The two-sided {S}tefan problem with a spatially dependent
latent heat}.
\bjournal{Trans. Amer. Math. Soc.}
\bvolume{326}
\bpages{669--699}.
\bid{doi={10.2307/2001779}, issn={0002-9947}, mr={1008699}}
\end{barticle}
%
\bptok{imsref}%
\endbibitem

\bibitem{M}
%
\begin{bmisc}[author]
\bauthor{\bsnm{Meilijson},~\bfnm{I.}\binits{I.}}
(\byear{1983}).
\bhowpublished{Skorokhod's problem---embeddings in Brownian motion.
Notes from the 3rd winter school of probability in Chile.}
\end{bmisc}
%
\bptok{imsref}%
\endbibitem

\bibitem{O}
%
\begin{barticle}[mr]
\bauthor{\bsnm{Ob{\l}{\'o}j},~\bfnm{Jan}\binits{J.}}
(\byear{2004}).
\btitle{The {S}korokhod embedding problem and its offspring}.
\bjournal{Probab. Surv.}
\bvolume{1}
\bpages{321--390}.
\bid{doi={10.1214/154957804100000060}, issn={1549-5787}, mr={2068476}}
\end{barticle}
%
\bptok{imsref}%
\endbibitem

\bibitem{root}
%
\begin{barticle}[mr]
\bauthor{\bsnm{Root},~\bfnm{D.~H.}\binits{D.~H.}}
(\byear{1969}).
\btitle{The existence of certain stopping times on {B}rownian motion}.
\bjournal{Ann. Math. Statist.}
\bvolume{40}
\bpages{715--718}.
\bid{issn={0003-4851}, mr={0238394}}
\end{barticle}
%
\bptok{imsref}%
\endbibitem

\bibitem{rost1}
%
\begin{barticle}[mr]
\bauthor{\bsnm{Rost},~\bfnm{Hermann}\binits{H.}}
(\byear{1971}).
\btitle{The stopping distributions of a {M}arkov {p}rocess}.
\bjournal{Invent. Math.}
\bvolume{14}
\bpages{1--16}.
\bid{issn={0020-9910}, mr={0346920}}
\end{barticle}
%
\bptok{imsref}%
\endbibitem

\bibitem{rost}
%
\begin{bincollection}[mr]
\bauthor{\bsnm{Rost},~\bfnm{H.}\binits{H.}}
(\byear{1976}).
\btitle{Skorokhod stopping times of minimal variance}.
In \bbooktitle{S\'eminaire de {P}robabilit\'es, {X} ({P}remi\`ere
Partie, {U}niv. {S}trasbourg, {S}trasbourg, Ann\'ee Universitaire 1974/1975)}
\bseries{Lecture Notes in Math.}
\bvolume{511}
\bpages{194--208}.
\bpublisher{Springer},
\blocation{Berlin}.
\bid{mr={0445600}}
\end{bincollection}
%
\bptok{imsref}%
\endbibitem

\bibitem{rubi}
%
\begin{bbook}[mr]
\bauthor{\bsnm{Rubenstein},~\bfnm{L.~I.}\binits{L.~I.}}
(\byear{1971}).
\btitle{The {S}tefan Problem}.
\bpublisher{Amer. Math. Soc.},
\blocation{Providence, RI}.
\bid{mr={0351348}}
\bptnote{check year}%
\end{bbook}
%
\bptok{imsref}%
\endbibitem

\bibitem{S}
%
\begin{bbook}[mr]
\bauthor{\bsnm{Skorohod},~\bfnm{A.~V.}\binits{A.~V.}}
(\byear{1961}).
\btitle{Issledovaniya po Teorii Sluchainykh Protsessov
({S}tokhasticheskie Differentsialnye Uravneniya i Predelnye Teoremy
Dlya Protsessov {M}arkova)}.
\bpublisher{Izdat. Kiev. Univ.},
\blocation{Kiev}.
\bid{mr={0185619}}
\end{bbook}
%
\bptok{imsref}%
\endbibitem

\bibitem{strassen}
%
\begin{barticle}[mr]
\bauthor{\bsnm{Strassen},~\bfnm{V.}\binits{V.}}
(\byear{1965}).
\btitle{The existence of probability measures with given marginals}.
\bjournal{Ann. Math. Statist.}
\bvolume{36}
\bpages{423--439}.
\bid{issn={0003-4851}, mr={0177430}}
\end{barticle}
%
\bptok{imsref}%
\endbibitem

\bibitem{To}
%
\begin{barticle}[mr]
\bauthor{\bsnm{Tortorici},~\bfnm{Paolo}\binits{P.}}
(\byear{1948}).
\btitle{Soluzione approssimata di un'equazione integrale di {C}antelli}.
\bjournal{Ann. Mat. Pura Appl. (4)}
\bvolume{27}
\bpages{75--86}.
\bid{issn={0003-4622}, mr={0030112}}
\end{barticle}
%
\bptok{imsref}%
\endbibitem

\bibitem{T}
%
\begin{barticle}[mr]
\bauthor{\bsnm{Tricomi},~\bfnm{F.~G.}\binits{F.~G.}}
(\byear{1967}).
\btitle{Einige ungel\"oste {P}robleme der klassischen {A}nalysis}.
\bjournal{Abh. Math. Semin. Univ. Hambg.}
\bvolume{31}
\bpages{25--32}.
\bid{issn={0025-5858}, mr={0216906}}
\end{barticle}
%
\bptok{imsref}%
\endbibitem

\bibitem{V}
%
\begin{barticle}[mr]
\bauthor{\bsnm{Vuik},~\bfnm{C.}\binits{C.}}
(\byear{1993}).
\btitle{Some historical notes on the {S}tefan problem}.
\bjournal{Nieuw Arch. Wiskd. (4)}
\bvolume{11}
\bpages{157--167}.
\bid{issn={0028-9825}, mr={1239620}}
\end{barticle}
%
\bptok{imsref}%
\endbibitem

\end{thebibliography}
\end{document}